\algrenewcommand\algorithmicrequire{\textbf{Input:}}
\algrenewcommand\algorithmicensure{\textbf{Output:}}
\newtheorem{theorem}{Theorem}[section]
\newtheorem{lemma}[theorem]{Lemma}%[section]
\newtheorem{definition}[theorem]{Definition}%[section]
\newtheorem{cor}[theorem]{Corollary}%[section]
\newcommand{\tmp}{\ensuremath{\operatorname{tmp}}}
\newcommand{\cl}{\ensuremath{\operatorname{cl}}}
\newcommand{\spa}{\ensuremath{\operatorname{span}}}
\newcommand{\Amax}{\ensuremath{A^{\star}}}
\newcommand{\Bmax}{\ensuremath{B^{\star}}}
\newcommand{\Rn}{\ensuremath{R_{\operatorname{new}}}}
\newcommand{\LT}{\ensuremath{\mathcal{L}}}
\newcommand{\SC}{\ensuremath{\mathfrak{sc}}}
\newcommand{\R}{\ensuremath{\mathcal{R}}}
\newcommand{\minimal}{\ensuremath{\operatorname{min}}}
\newcommand{\minimum}{\ensuremath{\operatorname{MIN}}}
\newcommand{\rt}[1]{\ensuremath{\mathtt{#1}}}
\newcommand{\mc}{\mathcal}
\newcommand{\LrR}[2]{\ensuremath{\mathfrak{L}_{#1}(#2)}}
\newcommand{\lmax}[2]{\ensuremath{\ell^{\star}_{#1}(#2)}}
\newcommand{\Lmax}[2]{\ensuremath{\mathfrak{L}^{\star}_{#1}(#2)}}
\newcommand{\RF}{\ensuremath{(R,\mathbb{F}_R)}}
\providecommand{\keywords}[1]{\textbf{\textit{Keywords: }} #1}
\begin{document}

\title{The Matroid Structure of Representative Triple Sets and Triple-Closure Computation}

\author[]{Marc Hellmuth}

\author[]{Carsten R.\ Seemann}

\affil[]{\footnotesize Dpt.\ of Mathematics and Computer Science, University of Greifswald, Walther-
  Rathenau-Strasse 47, D-17487 Greifswald, Germany\\ \smallskip

	Saarland University, Center for Bioinformatics, Building E 2.1, P.O.\ Box 151150, D-66041 Saarbr{\"u}cken, Germany\\ \smallskip
	Email: \texttt{mhellmuth@mailbox.org}}

\date{\ }

\maketitle

\abstract{ 
The closure $\cl(R)$ of a consistent set $R$ of triples (rooted binary trees on three leaves) provides essential information about tree-like relations that are shown by any supertree that displays all triples in $R$. In this contribution, we are concerned with representative triple sets, that is, subsets $R'$ of $R$ with $\cl(R') = \cl(R)$. In this case, $R'$ still contains all information on the tree structure implied by $R$, although $R'$ might be significantly smaller. We show that representative triple sets that are minimal w.r.t.\ inclusion form the basis of a matroid. This in turn implies that minimal representative triple sets also have minimum cardinality. In particular, the matroid structure can be used to show that minimum representative triple sets can be computed in polynomial time with a simple greedy approach. For a given triple set $R$ that ``identifies'' a tree, we provide an exact value for the cardinality of its minimum representative triple sets. In addition, we utilize the latter results to provide a novel and efficient method to compute the closure $\cl(R)$ of a consistent triple set $R$ that improves the time complexity $\mc{O}(|R||L_R|^4)$ of the currently fastest known method proposed by Bryant and Steel (1995). In particular, if a minimum representative triple set for $R$ is given, it can be shown that the time complexity to compute $\cl(R)$ can be improved by a factor up to $|R||L_R|$. As it turns out, collections of quartets (unrooted binary trees on four leaves) do not provide a matroid structure, in general. 
}

\smallskip
\noindent
\keywords{Rooted Triple, Closure, Matroid, Ahograph,  BUILD, Greedy,  Phylogeny, Quartet  }
\sloppy

\section{Introduction}

Inference of phylogenetic relationships between genes or species based on
genomic sequence information is one of the main issues in phylogenomics
\cite{Philippe2007}. The evolutionary history of genes and species is
usually represented as a tree. One of the possible building blocks for the reconstruction
of the histories of both, genes and species, are provided by triples
(rooted binary trees on three leaves)
\cite{HHH+12,HHH+13,LDEM:16,lafond2015orthology,DEML:16,GS:07,JW:06,IKK+09,Hellmuth2017,SEMPLE03,BYRKA20101136}. Such
triples can be obtained directly from sequence data and are combined to a
``supertree'' that provides then the information of the history of the
respective genes or species \cite{Hellmuth:15a,HW:16b,GASWH:18,Lafond2014,CK:99,CK:01,KSH:16,Ewing2008,DD:10,KANNAN199626,csuros2002fast}. 
In this contribution, we consider \emph{consistent} sets $R$ of triples, that is,
all triples of $R$ fit into a common supertree, which enforces further
tree-like relations to hold \cite{GSS:07,Dekker86,BS:95}. This allows one
to define a \emph{closure operation} $\cl(R)$ for $R$ that comprises all
triples that are displayed by every proper supertree for $R$. The closure of
sets of rooted or unrooted trees has been extensively studied in the last
decades \cite{Dekker86,BS:95, GSS:07,
Bryant97,huber2005recovering,BBDS:00,Dezulian2004} and has various
applications in phylogenomics
\cite{RBC+07,Hellmuth:15a,SBR:11,RSA:RSA3,HDKS:04,ERDOS199977,Meacham1983,Mossel2004189,WCT:04}. 

Here, we are particularly interested in the computation of the closure and
\emph{representative} sets $R'$ for consistent triple sets $R$, that is,
subsets $R'$ of $R$ that satisfy $\cl(R') = \cl(R)$. Such representative
sets $R'$ are of particular interest, since on the one hand, they can
reduce the space complexity to store all information on the tree-like
relationships that is also provided by $R$ and, on the other hand, will
significantly improve the time complexity to compute the closure, as we
shall see later. Natural optimization problems within this context aim at
finding representative sets $R'$ that are minimal w.r.t.\ inclusion or have
minimum size among all representative subsets of $R$. Gr\"unewald, Steel
and Swenson \cite{GSS:07} established important results to the latter
problems. In particular, they characterized minimal representative triple
sets $R'\subseteq R$ for the case that $R$ ``identifies'' a given tree $T$
and gave lower bounds $B(T)$ on their cardinalities. 
Moreover, Mike Steel showed that all minimal ``tree-defining'' sets
of rooted triples must have the same size \cite{Steel1992}.
However, for an arbitrary consistent triple set $R$
it is still  unclear whether  the (decision version of the) problem of finding a representative
subset $R'\subseteq R$ of minimum size is NP-complete or polynomial-time solvable.

In this contribution, 
we show that minimum representative subset $R'\subseteq R$ can
be computed in polynomial time. To this end, we show that minimal
representative sets $R'\subseteq R$ form the basis of the matroid
$(R,\mathbb{F}_R)$ \cite{Oxley:11,korte2012combinatorial}. Since all basis
elements of a matroid have the same size and since minimum representative
sets are minimal, it turns out that minimum representative sets can be
computed with a simple greedy algorithm. We emphasize that there is a clear
difference between the closure operator $\cl(R)$ for rooted triple sets $R$
and the respective matroid closure operator, although $\cl(R)$ is used to
define the matroid $(R,\mathbb{F}_R)$, see \cite{Bryant97} or Section
\ref{sec:min} for further details. We exploit the techniques we used to
prove the matroid structure and provide a novel algorithm to compute the
closure $\cl(R)$ of a consistent set $R$ of triples. Let $L_R$ denote the
set of leaves on which $R$ is defined on. If $R$ is large sized, that is,
$|R|=\Theta(L_R^3)$, then our algorithm has the same asymptotic time
complexity as the method proposed by Bryant and Steel which runs in
$\mc{O}(|R||L_R|^4) \subseteq \mc{O}(|R|^5)$ time \cite{BS:95}. However,
our algorithm has a time complexity of $\mc{O}(|R|^2|L_R|) \subseteq
\mc{O}(|R|^3)$ and thus, significantly improves the computational effort
for moderately sized input triple sets $R$. Further runtime improvements
(up to a factor of $|R||L_R|$) can be achieved whenever minimum
representative subset $R'\subseteq R$ are used as input triple set. It
should be noted that Bryant and Steel established this algorithm in order
to show that $\cl(R)$ can be computed in polynomial-time rather than to be
efficient. Nevertheless, they supposed that ``a far more efficient
algorithm could be found''. However, over the last two decades no such
algorithm appeared in the literature. 
We wish to point out that 
the theory of matroids  has touched phylogenetics also in many other contexts, see e.g.\ 
\cite{AK:06,dmtcs:2078,MOULTON20091496,Steel:05,MOULTON2007186,PG:05,Ardila2005,Gusfield:2002,Francisco2009}.

This contribution is organized as follows: In Section \ref{sec:prelim}, we
present the basic and relevant concepts used in this paper. In particular,
we review important results for closure operations on rooted triple sets
established by Bryant and Steel \cite{BS:95,Bryant97}. A key property that
will play a major role in this paper is provided by the graph
representation of triple sets (Ahograph)
and its connected components. In
Section \ref{sec:repT}, we are concerned with structural properties of
representative subsets $R'\subseteq R$ that are closely related to the
structure of the Ahograph. The latter results will be used in Section
\ref{sec:min} to show that minimal representative sets $R'\subseteq R$ (and
its subsets) form a matroid $(R,\mathbb{F}_R)$. In Section
\ref{sec:cl}, we present a novel method to compute the closure $\cl(R)$. 
Finally, we discuss in Section \ref{sec:fr} further results. 
We give sufficient conditions that are quite useful to check whether
an arbitrary triple is contained in all minimal representative sets and if $R$
is already minimal. 
Moreover, we review and generalize some of the results established for 
triple sets $R$ that ``identify'' or ``define'' a tree. In addition, 
we address the problem of finding minimal representative sets $Q'\subseteq Q$
of a collection $Q$ of quartets (unrooted binary tree on four leaves). 
As it turns out, such sets do not provide a matroid structure.
We conclude with a short discussion about the established results and open problems in
Section \ref{sec:end}.

\section{Preliminaries}
\label{sec:prelim}

We consider undirected graphs $G=(V,E)$ with non-empty vertex set $V$ and
edge set $E$. A graph $G=(V,E)$ is \emph{connected} if for any two vertices
$x,y\in V$ there is a sequence of vertices $(x,v_1,\dots,v_n,y)$, called
\emph{walk}, such that the edges $(x,v_1),(v_n,y)$ and $(v_i,v_{i+1})$,
$1\leq i\leq n-1$ are contained in $E$. A walk $(x,v_1,\dots,v_n,y)$ in
which all vertices are pairwise distinct is called a \emph{path} and
denoted by $P_{xy}$. A \emph{cycle} is a walk $(x,v_1,\dots,v_n,x)$
for which $n\geq 2$ and $(x,v_1,\dots,v_n)$ is a path.
A graph $H=(W,F)$ is a \emph{subgraph} of $G=(V,E)$,
in symbols $H\subseteq G$, if $W\subseteq V$ and $F\subseteq E$. The
subgraph $H=(W,F)$ is an \emph{induced} subgraph of $G=(V,E)$, if $x,y\in
W$ and $(x,y)\in E$ implies $(x,y)\in F$. If $H=(W,F)$ is an induced
subgraph of $G$ we write $\langle W \rangle_G$ or simply $\langle W
\rangle$ if there is no risk of confusion. A \emph{connected component} of
a graph $G=(V,E)$ is a subset $W\subseteq V$ such that $\langle W
\rangle_G$ is connected and maximal w.r.t.\ inclusion.

A \emph{tree} $T = (V, E)$  is a connected graph that does not contain cycles. 
The \emph{leaf set} $L \subseteq V$ of $T$ comprises all vertices that have degree 1.
The vertices that are contained in  $V^0\coloneqq V \setminus L$ are called \emph{inner} vertices.
The set of inner edges $E^0$ contains all edges $(x,y)\in E$ for which $x,y\in V^0$.
A \emph{rooted tree} $T = (V, E)$  is a tree 
with one distinguished inner vertex $\rho_T \in V$ called \emph{root} of $T$.
If every inner vertex of an unrooted tree has degree 3, the tree is called \emph{binary}.
A rooted tree is called  \emph{binary} if the degree of each inner vertex $v\neq \rho_T$
is 3 and the degree of the root $\rho_T$ is 2.
In what follows, we consider rooted trees $T=(V,E)$ 
such that all inner vertices that are distinct from the root have degree at least three. 
For every vertex $v\in V$ we denote by $C(v)$ 
the leaf set of the subtree of $T$ rooted at $v$
and put $\mathcal{C}(T)=\bigcup_{v\in V}\{C(v)\}$,  called the \emph{hierarchy of $T$}.
We say that a rooted tree $T'$ \emph{refines} 
$T$, in symbols $T \le T'$, if $\mathcal{C}(T)\subseteq \mathcal{C}(T')$. 

A \emph{triple $\rt{ab|c}$}  is a binary rooted tree $T$ on three leaves $a,b$ and $c$ such that
     the path from $a$ to $b$ does not intersect the path
from $c$ to the root $\rho_T$.
A rooted tree $T$ with leaf set $L$ \emph{displays} a triple $\rt{ab|c}$,
if $a,b,c\in L$ and the path from $a$ to $b$ does not intersect the path
from $c$ to the root $\rho_T$. Note, that no distinction is made between
$\rt{ab|c}$ and $\rt{ba|c}$. The set of all triples
that are displayed by the rooted tree $T$ is denoted by $\mc{R}(T)$. An
arbitrary collection $R$ of triples is called \emph{triple set}. 
A triple set $R$ is  \emph{consistent} if there is a rooted tree $T$ such that
$R\subseteq \mc{R}(T)$. In the latter case, we say that $T$ \emph{displays}
$R$. 
The set $L_R:=\cup_{\rt{ab|c}\in R} \{a,b,c\}$
is the union of the leaf set of each triple in $R$. 
A triple set $R$ \emph{identifies} a rooted tree $T$ with leaf set
$L_R$, if $T$ displays $R$ and any other tree that displays $R$ refines
$T$. A triple set $R$ \emph{defines} a rooted tree $T$ with leaf set $L_R$,
if $T$ is the unique tree (up to isomorphism) that displays $R$.

There is a polynomial-time algorithm, which is customarily referred to as
\texttt{BUILD} \cite{Semple:book,Steel:book}, that was established by 
Aho, Sagiv, Szymanski, and Ullman \cite{Aho:81}. 
\texttt{BUILD} either constructs a
rooted tree $T$ that displays $R$ or recognizes that $R$ is inconsistent
\cite{Aho:81}. The runtime of \texttt{BUILD} is $\mc{O}(|L_R||R|)$
\cite{Semple:book}. Further practical implementations and improvements have
been discussed in \cite{Jansson:05,DF:16,Henzinger:99,Holm:01}.
\texttt{BUILD} is a top-down, recursive algorithm \cite{Aho:81,BS:95} 
that uses an auxiliary graph that is also known as \emph{Ahograph} \cite{huson2010phylogenetic}, 
\emph{clustering graph} \cite{Semple:book} or \emph{cluster graph} \cite{DKK+12}.
We will use the term ``Ahograph''.  This graph is used to represent the
structure of a collection of triples: For a given triple set $R$ and an
arbitrary subset $\LT\subseteq L_R$, the Ahograph $[R,\LT]$ has vertex set
$\LT$ and two vertices $a,b\in \LT$ are linked by an edge, if there is a
triple $\rt{ab|c} \in R$ with $c\in \LT$. Based on connectedness properties
of the graph $[R,\LT]$ for particular subsets $\LT\subseteq L_R$, the
algorithm \texttt{BUILD} determines whether $R$ is consistent or not. In
particular, this algorithm makes use of the following well-known theorem.

\begin{theorem}[\cite{Aho:81,BS:95}]
A set  of triples $R$ is consistent if and only if for each
subset $\LT\subseteq L_R$ with $|\LT|>1$ the graph $[R,\LT]$ is disconnected. 
\label{thm:ahograph}
\end{theorem}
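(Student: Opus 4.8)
This is the classical correctness criterion behind \texttt{BUILD}, so the plan is to prove the two implications separately: ``$R$ consistent $\Rightarrow$ every $[R,\LT]$ with $|\LT|>1$ is disconnected'' by a direct least-common-ancestor argument inside a displaying tree, and the converse by induction on $|L_R|$ that replays the recursion of \texttt{BUILD} --- peel off the connected components of $[R,L_R]$, recurse on each, and glue the results under a new root.

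For the forward direction I would fix a rooted tree $T$ with $R\subseteq\mc{R}(T)$ and a subset $\LT\subseteq L_R$ with $|\LT|>1$, set $r\coloneqq\operatorname{lca}_T(\LT)$, and note that $r$ has at least two children whose subtrees meet $\LT$ (otherwise $r$ would not be the \emph{least} common ancestor). This yields a partition of $\LT$ into the non-empty blocks $C(v)\cap\LT$ as $v$ ranges over the children of $r$. The heart of the argument is that every edge of $[R,\LT]$ stays within a single block: if $\rt{ab|c}\in R$ with $c\in\LT$, then since $T$ displays $\rt{ab|c}$ the vertex $\operatorname{lca}_T(a,b)$ is a proper descendant of $\operatorname{lca}_T(\{a,b,c\})$, and the latter is a descendant of $r$ because $\{a,b,c\}\subseteq\LT$; hence $\operatorname{lca}_T(a,b)\neq r$, so $a$ and $b$ lie below a common child of $r$, i.e.\ in the same block. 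Therefore the connected components of $[R,\LT]$ refine a partition of $\LT$ into at least two non-empty blocks, so $[R,\LT]$ is disconnected.

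For the converse I would induct on $|L_R|$; the base case $|L_R|\le 1$ forces $R=\emptyset$ and is trivial. In the inductive step, the hypothesis applied to $\LT=L_R$ says $[R,L_R]$ is disconnected; let $L_1,\dots,L_k$ ($k\ge 2$) be its connected components and put $R_i\coloneqq\{\rt{xy|z}\in R:\ x,y,z\in L_i\}$. I would first check that each $R_i$ again satisfies the theorem's hypothesis --- for every $\LT\subseteq L_{R_i}$ one has $[R_i,\LT]=[R,\LT]$, hence it is disconnected --- and that $L_{R_i}\subsetneq L_R$ (strict since $k\ge 2$), so induction supplies a rooted tree $T_i$ displaying $R_i$. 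The gluing step creates a new root $\rho$ and attaches below it the roots of all $T_i$ with $R_i\neq\emptyset$ together with every leaf of $L_R$ occurring in no $R_i$, producing a tree $T$ with leaf set $L_R$ and at least two subtrees under $\rho$. It then remains to verify that $T$ displays each $\rt{ab|c}\in R$: such $a,b$ are adjacent in $[R,L_R]$, hence lie in a common component $L_i$; if $c\in L_i$ as well, then $\rt{ab|c}\in R_i$ is displayed by the subtree $T_i$ and hence by $T$, whereas if $c\notin L_i$ then $c$ hangs off $\rho$ outside the subtree containing $a$ and $b$, so $T$ displays $\rt{ab|c}$.

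I expect the main obstacle to be keeping the ancestor/descendant orientation straight in the forward direction: the whole argument rests on the monotonicity statement that the least common ancestor of the \emph{larger} set $\{a,b,c\}$ cannot descend below $r=\operatorname{lca}_T(\LT)$, which I would isolate as a one-line lemma. The converse is conceptually routine but hides two bookkeeping points worth care: (i) the induction hypothesis must be the full ``for all $\LT$'' statement rather than merely ``$[R,L_R]$ disconnected'', so that it transfers cleanly to the sub-instances $R_i$ --- this is exactly why the theorem is phrased for every subset of $L_R$ --- and (ii) a component $L_i$ may contain leaves that occur in no triple of $R_i$ (indeed $R_i$ may even be empty), so the gluing construction must explicitly re-attach those leaves under $\rho$. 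Neither point is difficult, but both are easy to omit.
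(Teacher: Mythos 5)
Your forward direction is correct: the least-common-ancestor argument (every edge of $[R,\LT]$ joins two leaves lying below a common child of $\operatorname{lca}_T(\LT)$, while at least two children of that vertex meet $\LT$) is the standard proof, and the paper itself only cites this theorem from Aho et al.\ and Bryant--Steel without reproving it. The converse, however, contains a genuine error in the gluing step. You recurse on $R_i$ (so the tree $T_i$ has leaf set $L_{R_i}$, not $L_i$) and then attach ``every leaf of $L_R$ occurring in no $R_i$'' directly below the new root $\rho$. Those leftover leaves must instead stay inside the subtree built for \emph{their} connected component. Concretely, take $R=\{\rt{ab|c}\}$: the components of $[R,L_R]$ are $\{a,b\}$ and $\{c\}$, both restricted sets $R_i$ are empty, so your construction makes $a,b,c$ all children of $\rho$, i.e.\ the star on $\{a,b,c\}$ --- which does not display $\rt{ab|c}$, since the path from $a$ to $b$ passes through $\rho$ and meets the path from $c$ to the root. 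Your verification sentence ``if $c\notin L_i$ then $c$ hangs off $\rho$ outside the subtree containing $a$ and $b$'' silently assumes $a$ and $b$ sit together in some $T_i$, which fails exactly when one or both of them lie in $L_i\setminus L_{R_i}$; the same failure occurs already with one leftover leaf, e.g.\ $R=\{\rt{ab|c},\rt{bd|c}\}$.

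The repair is the one \texttt{BUILD} actually uses: strengthen the induction to pairs $(R,L)$ with $L_R\subseteq L$ and the claim that there is a tree with leaf set exactly $L$ displaying $R$, and recurse on $(R_i,L_i)$ --- that is, on the \emph{whole} component $L_i$, not on $L_{R_i}$. Equivalently, after obtaining $T_i$ for $R_i$, attach the leaves of $L_i\setminus L_{R_i}$ as additional children of the root of $T_i$ (this is harmless: for any $\rt{xy|z}\in R_i$ the path from $x$ to $y$ in $T_i$ avoids $\rho_{T_i}$, so adding pendant leaves at $\rho_{T_i}$ destroys no displayed triple; if $R_i=\emptyset$ take any tree on $L_i$, e.g.\ a star), and only then hang exactly one subtree per component under $\rho$. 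With that modification each component $L_i$ occupies a single subtree below $\rho$, your two-case verification goes through verbatim, and the rest of your argument --- including the correct observation that $[R_i,\LT]=[R,\LT]$ for $\LT\subseteq L_i$, so the disconnectedness hypothesis passes to the sub-instances --- is sound.
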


Since we will use the Ahograph and its key features as a frequent tool in upcoming proofs, 
we now summarize some of its basic properties. 

\begin{lemma}[\cite{BS:95}]
  If $R'$ is a subset of the triple set $R$ and  $L'\subseteq L\subseteq L_R$,
  then $[R',L']$ is a subgraph of $[R,L]$. 
  \label{lem:subgraph}
\end{lemma}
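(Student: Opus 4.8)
The plan is to verify directly from the definition of the Ahograph that both the vertex set and the edge set of $[R',L']$ are contained in those of $[R,L]$; this amounts to unwinding the definitions of the subgraph relation, of $\subseteq$ for triple and leaf sets, and of $[R,\LT]$ itself. For the vertices this is immediate: $[R,\LT]$ has vertex set $\LT$, so the vertex set of $[R',L']$ is $L'$, the vertex set of $[R,L]$ is $L$, and $L'\subseteq L$ holds by assumption.

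For the edges I would fix an arbitrary edge $(a,b)$ of $[R',L']$ and show that it is an edge of $[R,L]$. By definition of the Ahograph, $(a,b)$ being an edge of $[R',L']$ means that $a,b\in L'$ and that there is a triple $\rt{ab|c}\in R'$ with $c\in L'$. Since $R'\subseteq R$ we obtain $\rt{ab|c}\in R$, and since $L'\subseteq L$ we obtain $a,b,c\in L$. Thus $a,b\in L$ and the witnessing triple $\rt{ab|c}$ lies in $R$ with its third leaf $c$ in $L$, which is exactly the condition for $(a,b)$ to be an edge of $[R,L]$. Together with $L'\subseteq L$ on the vertex level, this establishes $[R',L']\subseteq [R,L]$ in the sense of the subgraph relation defined in the Preliminaries.

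There is essentially no obstacle here, as each implication is a one-line consequence of the inclusions $R'\subseteq R$ and $L'\subseteq L$. The only point that warrants a little care is to carry out the edge argument for a single generic edge, so that the existential witness $c$ supplied by membership in $R'$ is transferred to $R$ before the inclusion $L'\subseteq L$ is applied; arguing about the whole edge set at once would blur this step.
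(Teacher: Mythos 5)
Your proof is correct: the paper states this lemma as a cited result from Bryant and Steel and gives no proof of its own, and your direct verification from the definitions (vertex inclusion $L'\subseteq L$, plus transferring the witnessing triple $\rt{ab|c}$ from $R'$ to $R$ and its third leaf $c$ from $L'$ to $L$) is exactly the standard argument one would supply. Nothing is missing.
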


\begin{lemma}
	Let $R$ be a triple set and $\LT\subseteq L_R$. 
	Assume that $A\subseteq \LT_A\subseteq \LT$ and $B\subseteq \LT_B\subseteq \LT$ 
   such that the induced subgraphs $\langle A \rangle_{[R,\LT_A]}$  and $\langle B \rangle_{[R,\LT_B]}$
	in $[R,\LT_A]$ and $[R,\LT_B]$, respectively,  are connected. If 
	$A\cap B\neq \emptyset$,  
	then $\langle A\cup B \rangle_G$ is connected in $G$, where $G=[R,\LT_A\cup \LT_B]$ or
	 $G = [R,\LT]$.
	\label{lem:cc}
\end{lemma}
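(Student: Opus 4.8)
The plan is to reduce everything to one monotonicity observation about Ahographs: enlarging the second argument of $[R,\cdot]$ can only create new edges, never destroy existing ones. First I would record the following consequence of Lemma~\ref{lem:subgraph} (equivalently, read off directly from the definition of the Ahograph): if $\LT_A\subseteq \LT'\subseteq L_R$, then for every $W\subseteq \LT_A$ the graph $\langle W\rangle_{[R,\LT_A]}$ is a spanning subgraph of $\langle W\rangle_{[R,\LT']}$. Indeed, an edge $(a,b)$ of the former has $a,b\in W$ and is witnessed by a triple $\rt{ab|c}\in R$ with $c\in\LT_A\subseteq\LT'$, hence it is also an edge of the latter; and both graphs have the same vertex set $W$. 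Since a graph containing a connected spanning subgraph is itself connected, connectedness of $\langle W\rangle_{[R,\LT_A]}$ implies connectedness of $\langle W\rangle_{[R,\LT']}$.

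Next I would apply this twice. Set $G=[R,\LT_A\cup\LT_B]$; the case $G=[R,\LT]$ is handled identically, using $\LT_A,\LT_B\subseteq\LT$. Because $\LT_A\subseteq \LT_A\cup\LT_B$ and $A\subseteq\LT_A$, the monotonicity step yields that $\langle A\rangle_G$ is connected; symmetrically $\langle B\rangle_G$ is connected. Moreover $A\cup B\subseteq\LT_A\cup\LT_B=V(G)$, so $\langle A\rangle_G$ and $\langle B\rangle_G$ are both subgraphs of $\langle A\cup B\rangle_G$; in particular any path lying entirely within $A$, or entirely within $B$, is also a path in $\langle A\cup B\rangle_G$.

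Finally I would glue the two pieces along a common vertex. Fix $x\in A\cap B$, which exists by hypothesis. Given arbitrary $y,z\in A\cup B$, connect $y$ to $x$ by a path inside $\langle A\rangle_G$ or $\langle B\rangle_G$ (whichever of $A,B$ contains $y$), and likewise connect $x$ to $z$; concatenating these walks gives a walk from $y$ to $z$ in $\langle A\cup B\rangle_G$. Hence $\langle A\cup B\rangle_G$ is connected for $G=[R,\LT_A\cup\LT_B]$, and, by the identical argument with $\LT$ in place of $\LT_A\cup\LT_B$, also for $G=[R,\LT]$.

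I do not expect a genuine obstacle. The one point requiring care is to keep the two notions of induced subgraph apart: $\langle A\rangle_{[R,\LT_A]}$ and $\langle A\rangle_G$ share the vertex set $A$ but in general have different edge sets, and the whole argument hinges on the inclusion of the former's edges into the latter's, which is exactly the monotonicity in the second argument. Everything else is the textbook fact that the union of two connected graphs sharing a vertex is connected.
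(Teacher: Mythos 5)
Your proof is correct and follows essentially the same route as the paper's: both rest on the subgraph monotonicity of the Ahograph in its second argument (Lemma~\ref{lem:subgraph}) and then glue the connected pieces $\langle A\rangle$ and $\langle B\rangle$ along a common vertex $x\in A\cap B$. Your explicit remark that the connecting walks stay inside $A\cup B$, so that the \emph{induced} subgraph $\langle A\cup B\rangle_G$ is connected, is a slightly more careful phrasing of the same argument.
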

\begin{proof}
	Let $A\subseteq \LT_A\subseteq \LT$, $B\subseteq \LT_B\subseteq \LT$ and assume that the induced subgraphs
	 $\langle A \rangle$  of $[R,\LT_A]$  and $\langle B \rangle$ of $[R,\LT_B]$ are connected.
	By Lemma \ref{lem:subgraph}, $[R,\LT_A]$ and $[R,\LT_B]$ are subgraphs of $[R,\LT_A\cup \LT_B]$.
	Let $x\in A\cap B$. Thus, every vertex $y\in A\cup B$
	is reachable from $x$ by a walk in $[R,\LT_A\cup \LT_B]$.
   Hence, any two vertices
	$y,y'\in A\cup B$ are reachable by a walk (over $x$) in $[R,\LT_A\cup \LT_B]$
   and therefore, $\langle A\cup B \rangle_{[R,\LT_A\cup \LT_B]}$ is a connected subgraph in 
	$[R,\LT_A\cup \LT_B]$. Since $\LT_A,\LT_B\subseteq \LT$ we can apply Lemma 
	\ref{lem:subgraph} and conclude that  $[R,\LT_A\cup \LT_B]$ is a subgraph of $[R,\LT]$ from what
	 the statement follows. 
\end{proof}

The requirement that a set $R$ of triples is consistent, and thus, that
there is a tree displaying all triples, allows to infer new triples from
the trees that display $R$ and to define a
\emph{closure operation} for $R$. 
Let $\spa(R)$ be the set
of all rooted trees with leaf set $L_R$ that display  $R$.
The closure of a consistent triple set $R$ is defined as
\[\cl(R) = \bigcap_{T\in \spa(R)} \mathcal{R}(T).\]
Hence, a triple $r$ is contained in the closure $\cl(R)$ if all
trees that display $R$ also display $r$. 
This operation satisfies the usual three properties of a closure operator \cite{BS:95},
namely: 
\begin{itemize}	
	\item  $R \subseteq \cl(R)$,  %\vspace{-0.1in}
	\item $\cl(\cl(R))=\cl(R)$, and %\vspace{-0.1in}
	\item if $R' \subseteq R$, then $\cl(R')\subseteq \cl(R)$. 
\end{itemize}	
There is a simple polynomial time algorithm to compute the closure that is
based on the following lemmas.
\begin{lemma}[{\cite[Prop.\ 9(1)]{BS:95}}]
	Let $R$ be a consistent triple set. 
	If $\cl(R)$ does not contain any triples with leaves $\{a,b,c\}$, 
	then $\cl(R)\cup\{\rt{ab|c}\}$, $\cl(R)\cup\{\rt{ac|b}\}$ and $\cl(R)\cup\{\rt{bc|a}\}$ are all consistent.
	\label{lem:bs}
\end{lemma}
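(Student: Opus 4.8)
The plan is to reduce the statement to the (equivalent) claim that each of the three triple sets $R\cup\{\rt{ab|c}\}$, $R\cup\{\rt{ac|b}\}$, $R\cup\{\rt{bc|a}\}$ is consistent. Indeed, since $R\subseteq\cl(R)$, consistency of $\cl(R)\cup\{r\}$ trivially implies consistency of $R\cup\{r\}$; conversely, a tree with leaf set $L_R$ that displays $R\cup\{r\}$ lies in $\spa(R)$ and hence displays $\cl(R)$, so it witnesses consistency of $\cl(R)\cup\{r\}$ as well. One may assume $\{a,b,c\}\subseteq L_R$, since otherwise the missing leaf is simply attached at a suitable place (to the root for $\rt{ab|c}$, as a sibling of $a$ or of $b$ for the other two) of a tree displaying $\cl(R)$. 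By the symmetry of the roles of $a,b,c$ it then suffices to prove that $R\cup\{\rt{ab|c}\}$ is consistent.

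I would argue by contradiction using the Ahograph. Assume $R\cup\{\rt{ab|c}\}$ is inconsistent. By Theorem \ref{thm:ahograph} there is a subset $\LT\subseteq L_R$ with $|\LT|>1$ for which $[R\cup\{\rt{ab|c}\},\LT]$ is connected, while $[R,\LT]$ is disconnected because $R$ is consistent. By Lemma \ref{lem:subgraph} these two graphs have the same vertex set $\LT$ and differ in at most the single edge $(a,b)$, which is present only when $\{a,b,c\}\subseteq\LT$. Hence $\{a,b,c\}\subseteq\LT$, the edge $(a,b)$ is not already present in $[R,\LT]$, and adding it turns a disconnected graph into a connected one. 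This is possible only if $[R,\LT]$ has exactly two connected components $C_1\ni a$ and $C_2\ni b$, with $c$ lying in one of them; relabelling $a\leftrightarrow b$ if necessary (harmless, since $\cl$ commutes with leaf renaming), I may assume $c\in C_2$.

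The crux is the following structural fact, which is essentially the correctness statement of \texttt{BUILD} \cite{Aho:81,BS:95}: for every $T\in\spa(R)$, the restriction $T|_\LT$ displays every triple of $R$ with leaves in $\LT$, and the leaf sets of the children of the root of $T|_\LT$ form a partition of $\LT$ that is a coarsening of the partition of $\LT$ into connected components of $[R,\LT]$. The argument for it is short: any edge $(x,y)$ of $[R,\LT]$ comes from some $\rt{xy|z}\in R$ with $x,y,z\in\LT$, which $T|_\LT$ displays; if $x$ and $y$ lay below different root-children of $T|_\LT$ then $\operatorname{lca}(x,y)$ would be the root of $T|_\LT$, contradicting that $\operatorname{lca}(x,y)$ lies strictly below $\operatorname{lca}(x,y,z)$. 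Since $[R,\LT]$ has exactly the two components $C_1,C_2$ and the root of any tree on at least two leaves has at least two children, the only admissible coarsening is $\{C_1,C_2\}$ itself. Hence $T|_\LT$ has exactly two root-children, with leaf sets $C_1$ and $C_2$; as $a\in C_1$ and $b,c\in C_2$, the tree $T|_\LT$, and therefore $T$, displays $\rt{bc|a}$.

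This conclusion holds for every $T\in\spa(R)$, so $\rt{bc|a}\in\bigcap_{T\in\spa(R)}\mathcal{R}(T)=\cl(R)$, contradicting the hypothesis that $\cl(R)$ contains no triple with leaf set $\{a,b,c\}$. Therefore $R\cup\{\rt{ab|c}\}$ is consistent, and the symmetric arguments give the same for $R\cup\{\rt{ac|b}\}$ and $R\cup\{\rt{bc|a}\}$; combined with the reduction in the first paragraph, $\cl(R)\cup\{\rt{ab|c}\}$, $\cl(R)\cup\{\rt{ac|b}\}$ and $\cl(R)\cup\{\rt{bc|a}\}$ are all consistent. The main obstacle in turning this into a full proof is writing out the structural fact of the third paragraph rigorously from the definitions (or pinning down a citable version of it); everything else is routine bookkeeping with Ahographs and restrictions of trees.
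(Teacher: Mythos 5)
Your argument is sound, but note that the paper itself offers no proof of this lemma: it is imported verbatim from Bryant and Steel as \cite[Prop.~9(1)]{BS:95}, so there is no in-paper proof to match against. Judged on its own, your proposal is correct. The reduction between $R\cup\{r\}$ and $\cl(R)\cup\{r\}$ is fine (the only unstated ingredient is that a witness tree for $R\cup\{r\}$ may be restricted to $L_R$ so that it lies in $\spa(R)$, and that restriction preserves displayed triples), and the Ahograph analysis is exactly right: if $[R\cup\{\rt{ab|c}\},\LT]$ is connected while $[R,\LT]$ is not, the two graphs differ only in the edge $(a,b)$, forcing $\{a,b,c\}\subseteq\LT$ and forcing $[R,\LT]$ to have exactly two components separating $a$ from $b$, with $c$ in one of them. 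From that point you could simply invoke Theorem \ref{thm:2cc} of the paper to conclude that $\rt{bc|a}$ (or $\rt{ac|b}$) lies in $\cl(R)$, contradicting the hypothesis; however, since Theorem \ref{thm:2cc} is Bryant's Cor.~3.9, whose original derivation may itself rest on Prop.~9(1), your choice to prove the needed direction from scratch via the \texttt{BUILD}-correctness fact (each connected component of $[R,\LT]$ lies below a single root-child of $T|_{\LT}$, so with exactly two components the root-child partition is $\{C_1,C_2\}$) is the safer, self-contained route, and your lca argument for it is correct. The only remaining work is what you already flag: writing out rigorously the standard facts about restrictions $T|_{\LT}$ (that they display all triples of $R$ with leaves in $\LT$, and that triples displayed by $T|_{\LT}$ are displayed by $T$), which is routine. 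So the proposal is a valid proof, and in fact more informative than the paper, which treats the statement as a black-box citation.
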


\begin{lemma}
	Let $R$ be consistent.  For all 	$\{a,b,c\} \in \binom{L_R}{3}$ exactly 
	one of $R\cup\{\rt{ab|c}\}$, $R\cup\{\rt{ac|b}\}$ and $R\cup\{\rt{bc|a}\}$ is consistent
	(say  $R\cup\{\rt{ab|c}\}$) if and only if $\rt{ab|c}\in \cl(R)$.
	\label{lem:algo-cl}
\end{lemma}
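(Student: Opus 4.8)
The plan is to prove the two implications separately, leaning on Lemma~\ref{lem:bs} together with a couple of elementary facts that I would record first. Fact one: $\cl(R)$ is itself a consistent triple set, since for any $T\in\spa(R)$ we have $\cl(R)\subseteq\mc{R}(T)$, so $T$ displays $\cl(R)$ (and $\spa(R)\neq\emptyset$ because $R$ is consistent). Fact two: no rooted tree displays two distinct triples on the same three leaves — an immediate consequence of $\mc{C}(T)$ being a hierarchy, since the two ``separating'' clusters share a common leaf, hence must be nested, and the nesting then contradicts that each of them excludes precisely the remaining leaf. I would also use throughout that $L_{R\cup\{r\}}=L_R$ whenever $r$ is a triple on leaves of $L_R$, so that any tree witnessing consistency of such an $R\cup\{r\}$ may be taken to have leaf set exactly $L_R$ and therefore lies in $\spa(R)$.

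For the direction ``$\rt{ab|c}\in\cl(R)\Rightarrow$ exactly one of the three sets is consistent'', I would first note that $R\cup\{\rt{ab|c}\}\subseteq\cl(R)$, which is consistent by Fact one, so $R\cup\{\rt{ab|c}\}$ is consistent. Then I would show the other two are not: if $R\cup\{\rt{ac|b}\}$ were consistent, a witnessing tree $T'$ with leaf set $L_R$ would display $R$, hence belong to $\spa(R)$, hence display $\cl(R)$ and in particular $\rt{ab|c}$; but $T'$ also displays $\rt{ac|b}$, contradicting Fact two. The same argument applies to $R\cup\{\rt{bc|a}\}$, so exactly one of the three augmented sets is consistent, namely $R\cup\{\rt{ab|c}\}$.

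For the converse, I would assume that exactly one of the three sets is consistent, say $R\cup\{\rt{ab|c}\}$, and argue by cases according to which triples on $\{a,b,c\}$ lie in $\cl(R)$. If $\rt{ac|b}\in\cl(R)$ or $\rt{bc|a}\in\cl(R)$, then (using $R\subseteq\cl(R)$ and Fact one) $R\cup\{\rt{ac|b}\}\subseteq\cl(R)$ or $R\cup\{\rt{bc|a}\}\subseteq\cl(R)$ is consistent, contradicting that $R\cup\{\rt{ab|c}\}$ is the only consistent one among the three. If $\cl(R)$ contains no triple on $\{a,b,c\}$ at all, Lemma~\ref{lem:bs} gives that $\cl(R)\cup\{\rt{ac|b}\}$ is consistent; since $R\subseteq\cl(R)$, a subset of a consistent set is consistent, so $R\cup\{\rt{ac|b}\}$ is consistent — again a contradiction. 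The only surviving possibility is $\rt{ab|c}\in\cl(R)$, which is the claim.

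The one step I expect to be genuinely non-mechanical is recognizing that Lemma~\ref{lem:bs} is exactly the tool needed for the case in which $\cl(R)$ ``sees'' no triple on $\{a,b,c\}$, and then transporting consistency from $\cl(R)\cup\{\rt{ac|b}\}$ back down to $R\cup\{\rt{ac|b}\}$ via the monotonicity $R\subseteq\cl(R)$. Everything else is a careful unwinding of the definitions of $\cl$, $\spa$, and consistency, plus Fact two; no further combinatorial work on trees or Ahographs seems necessary.
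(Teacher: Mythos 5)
Your proof is correct and follows essentially the same route as the paper: both directions hinge on the definition of $\cl$ via $\spa(R)$, with Lemma~\ref{lem:bs} invoked precisely in the case where $\cl(R)$ contains no triple on $\{a,b,c\}$ and consistency then transported from $\cl(R)\cup\{\rt{ac|b}\}$ down to $R\cup\{\rt{ac|b}\}$ via $R\subseteq\cl(R)$. Your explicit ``Fact two'' and the remark on restricting witnessing trees to $L_R$ merely spell out steps the paper leaves implicit.
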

\begin{proof}
	Assume that only $R\cup\{\rt{ab|c}\}$ is consistent while  
	$R\cup\{\rt{ac|b}\}$ and $R\cup\{\rt{bc|a}\}$ are not. 
	Since the latter two sets are not consistent, 
	there is no tree that displays $R$ and, in addition, $\rt{ac|b}$, resp., $\rt{bc|a}$.
	Thus, $\rt{ac|b}, \rt{bc|a}\notin \cl(R)$.
	Assume for contradiction that additionally $\rt{ab|c}\notin \cl(R)$.
	Hence, $\cl(R)$ does not contain any triples with the leaves $\{{a,b,c}\}$. 
	Lemma \ref{lem:bs} implies that 
	$\cl(R)\cup  \{\rt{ac|b}\}$ is consistent. However, this implies that
	there is a tree $T$ that display all triples of $\cl(R)$ and the triple $\rt{ac|b}$. 
	Since $R\subseteq \cl(R)$ this tree $T$ displays $R\cup\{\rt{ac|b}\}$;
 a contradiction.

	Conversely, let $\rt{ab|c}\in \cl(R)$.
	Thus, every tree that displays $R$ must also display $\rt{ab|c}$.
	Therefore, any tree that displays $R$ does not display $\rt{ac|b}$ and $\rt{bc|a}$.
	Hence, there is no tree that displays $R$ and in addition, $\rt{ac|b}$ 
	(resp.\ $\rt{bc|a}$), which implies that  
	$R\cup\{\rt{ac|b}\}$ and $R\cup\{\rt{bc|a}\}$ are not consistent. 
\end{proof}

Based on the latter result, the closure of a given consistent set $R$ can
be computed in $\mc{O}(|R||L_R|^4)$ time \cite{BS:95} as follows: For any
three distinct leaves $a,b,c\in L_R$ test whether exactly one of the sets
$R\cup\{\rt{ab|c}\}$, $R\cup\{\rt{ac|b}\}$, $R\cup\{\rt{bc|a}\}$ is
consistent (e.g.\ with the $\mc{O}(|L_R||R|)$-time algorithm
\texttt{BUILD}), and if so, add the respective triple to the closure
$\cl(R)$ of $R$. A further characterization of the closure by means of the
Ahograph is given by Bryant \cite[Cor.\ 3.9]{Bryant97}.
\begin{theorem} 
	For a consistent triple set $R$ we have $\rt{ab|c}\in \cl(R)$
	if and only if 
	there is a subset $\LT\subseteq L_R$ such that the Ahograph $[R,\LT]$
	has exactly two connected components, one containing $a$ and $b$ 		
		and the other containing $c$.
  \label{thm:2cc}
\end{theorem}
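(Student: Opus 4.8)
The plan is to prove the two implications separately, establishing the ``if'' direction first so that it can be recycled in the ``only if'' direction.

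\emph{The ``if'' direction.} Suppose $\LT\subseteq L_R$ is such that $[R,\LT]$ has exactly two connected components $A$ and $B$ with $a,b\in A$ and $c\in B$; in particular $|\LT|\ge2$. I would fix an arbitrary tree $T\in\spa(R)$ and argue that $T$ displays $\rt{ab|c}$, which by arbitrariness of $T$ gives $\rt{ab|c}\in\cl(R)$. Let $v$ be the deepest vertex of $T$ that is an ancestor of every leaf in $\LT$, and let $\sigma$ be the partition of $\LT$ whose blocks are the nonempty intersections of $\LT$ with the leaf sets of the subtrees of $T$ rooted at the children of $v$; by the choice of $v$ we have $|\sigma|\ge2$. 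The crucial step is to show that every connected component of $[R,\LT]$ lies inside a single block of $\sigma$, for which it is enough to look at a single edge $\{x,y\}$ of $[R,\LT]$. Such an edge arises from a triple $\rt{xy|z}\in R$ with $z\in\LT$, so $T$ displays $\rt{xy|z}$, i.e.\ the $x$--$y$ path in $T$ does not meet the path from $z$ to $\rho_T$; since $v$ is an ancestor of $z$ it lies on the latter path, hence the $x$--$y$ path avoids $v$, and as $v$ is also a common ancestor of $x$ and $y$ this forces $x$ and $y$ into a common child-subtree of $v$, i.e.\ into a common block of $\sigma$. Thus $\sigma$ is a coarsening of the two-block partition $\{A,B\}$, and $|\sigma|\ge2$ then yields $\sigma=\{A,B\}$. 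Therefore $a$ and $b$ lie in one child-subtree of $v$ while $c$ lies in another, so the $a$--$b$ path stays inside that first subtree while the path from $c$ to $\rho_T$ runs through the second subtree and then through $v$ towards the root; these paths are disjoint, so $T$ displays $\rt{ab|c}$, as desired.

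\emph{The ``only if'' direction.} Assume $\rt{ab|c}\in\cl(R)$, so $a,b,c$ are distinct leaves of $L_R$ and, by Lemma~\ref{lem:algo-cl}, the set $R\cup\{\rt{ac|b}\}$ is inconsistent while $R$ is consistent. Theorem~\ref{thm:ahograph} then gives a set $\LT\subseteq L_R$ with $|\LT|>1$ such that $[R\cup\{\rt{ac|b}\},\LT]$ is connected, whereas $[R,\LT]$ is disconnected because $R$ is consistent. By Lemma~\ref{lem:subgraph}, $[R,\LT]$ is a subgraph of $[R\cup\{\rt{ac|b}\},\LT]$ on the same vertex set $\LT$; the only way the latter can have an edge absent from the former is through the triple $\rt{ac|b}$, which contributes the edge $\{a,c\}$ and only when $a,b,c\in\LT$. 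Since the two graphs must differ, this forces $a,b,c\in\LT$, and $[R,\LT]$ is obtained from the connected graph $[R\cup\{\rt{ac|b}\},\LT]$ by deleting the single edge $\{a,c\}$. Deleting one edge from a connected graph leaves at most two components, and since $[R,\LT]$ is disconnected it has exactly two, say $X\ni a$ and $Y\ni c$. It remains to locate $b$: if $b\in Y$, then $[R,\LT]$ has exactly two components, one containing $b$ and $c$ and the other containing $a$, so the already-proven ``if'' direction would place $\rt{bc|a}$ in $\cl(R)$ alongside $\rt{ab|c}$; but then any $T\in\spa(R)$ would display two distinct triples on $\{a,b,c\}$, which is impossible. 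Hence $b\in X$, and $\LT$ is the required subset: $[R,\LT]$ has exactly two connected components, one containing $a$ and $b$ and the other containing $c$.

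The only step that really needs care is the crucial claim in the ``if'' direction --- that every edge of $[R,\LT]$ joins two leaves lying in a common child-subtree of $v$, equivalently that the connected components of $[R,\LT]$ refine the partition of $\LT$ induced at $v=\mathrm{lca}_T(\LT)$. This is entirely a matter of reading off the relevant ancestor relations in $T$ from the definition of ``displays'', and it can equally well be packaged by passing to the restriction $T|_\LT$ and invoking the standard fact that restricting a tree commutes with the set of triples it displays. Everything else is routine graph-theoretic bookkeeping together with the cited lemmas.
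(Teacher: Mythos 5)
Your proof is correct, but it is worth noting that the paper does not prove Theorem~\ref{thm:2cc} at all: it imports the statement from Bryant's thesis (cited as Cor.~3.9 of \cite{Bryant97}), so there is no in-paper argument to compare against. What you have produced is a self-contained derivation built almost entirely from the machinery the paper does state, which is a genuine added value. Your ``if'' direction is sound: for any $T\in\spa(R)$, taking $v=\mathrm{lca}_T(\LT)$ and the induced partition of $\LT$ by the child-subtrees of $v$, the observation that every Ahograph edge $\{x,y\}$ (coming from some $\rt{xy|z}\in R$ with $z\in\LT$) forces $x,y$ into a common child-subtree of $v$ is exactly right, and since the component partition $\{A,B\}$ then refines the child-subtree partition, which has at least two blocks, the two must coincide and $T$ displays $\rt{ab|c}$. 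Your ``only if'' direction cleverly recycles the paper's own tools: Lemma~\ref{lem:algo-cl} gives inconsistency of $R\cup\{\rt{ac|b}\}$, Theorem~\ref{thm:ahograph} produces a witness set $\LT$ on which $[R\cup\{\rt{ac|b}\},\LT]$ is connected while $[R,\LT]$ is not, Lemma~\ref{lem:subgraph} pins the difference down to the single edge $\{a,c\}$ (forcing $a,b,c\in\LT$), so deleting a bridge leaves exactly two components, and the position of $b$ is settled by re-using the ``if'' direction to rule out $\rt{bc|a}\in\cl(R)$. The only points you might make explicit are (i) that $\spa(R)\neq\emptyset$ for consistent $R$ (restrict any displaying tree to $L_R$), which your final contradiction needs so that it is not vacuous, and (ii) that $a,b,c\in L_R$ because $\rt{ab|c}\in\cl(R)\subseteq\mc{R}(T)$ for $T$ with leaf set $L_R$; both are routine and do not affect correctness.
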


We complete this section with a last result for later reference. 

\begin{lemma}
	Let $R$ be consistent and $R'\subseteq R$. Then	$R'\subseteq \cl(R\setminus  R')$ 
	if and only if $\cl(R\setminus  R') = \cl(R)$.
	In particular, if $\cl(R\setminus  R') = \cl(R)$, then $\cl(R\setminus \{r\}) = \cl(R)$ 
	for any triple $r\in R'$.
	\label{lem:identCl}
\end{lemma}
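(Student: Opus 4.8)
The plan is to derive everything from the three defining properties of the closure operator recalled above, namely extensivity ($R \subseteq \cl(R)$), idempotence ($\cl(\cl(R)) = \cl(R)$), and monotonicity (if $R'' \subseteq R$ then $\cl(R'') \subseteq \cl(R)$). No structural facts about triples or the Ahograph should be needed; the statement is a formal consequence of these axioms.

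First I would prove the equivalence. For the ``only if'' direction, assume $R' \subseteq \cl(R \setminus R')$. Combining this with extensivity applied to $R \setminus R'$ gives $R = (R \setminus R') \cup R' \subseteq \cl(R \setminus R')$. Applying $\cl$ to both sides and using monotonicity together with idempotence yields $\cl(R) \subseteq \cl(\cl(R \setminus R')) = \cl(R \setminus R')$, while the reverse inclusion $\cl(R \setminus R') \subseteq \cl(R)$ is immediate from monotonicity since $R \setminus R' \subseteq R$; hence $\cl(R \setminus R') = \cl(R)$. For the ``if'' direction, assume $\cl(R \setminus R') = \cl(R)$. Then $R' \subseteq R \subseteq \cl(R) = \cl(R \setminus R')$ by extensivity, which is exactly the claimed inclusion.

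It then remains to establish the ``in particular'' statement. Assuming $\cl(R \setminus R') = \cl(R)$, pick any $r \in R'$. Since $R \setminus R' \subseteq R \setminus \{r\} \subseteq R$, monotonicity gives the chain $\cl(R \setminus R') \subseteq \cl(R \setminus \{r\}) \subseteq \cl(R)$; as the two outer terms coincide by hypothesis, all three sets are equal, so in particular $\cl(R \setminus \{r\}) = \cl(R)$.

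I do not expect any genuine obstacle here: the only points requiring care are the bookkeeping with the inclusions $R \setminus R' \subseteq R \setminus \{r\} \subseteq R$ and the decomposition $R = (R \setminus R') \cup R'$, plus being explicit about which of the three closure axioms is invoked at each step. The argument is short and self-contained once the closure axioms are in hand.
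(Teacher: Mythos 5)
Your proof is correct, and in substance it follows the same route as the paper: the converse direction and the ``in particular'' statement are argued word for word as in the paper's proof, via $R'\subseteq R\subseteq\cl(R)$ and the chain $\cl(R\setminus R')\subseteq\cl(R\setminus\{r\})\subseteq\cl(R)$. The only difference is in the forward direction: the paper first rewrites $\cl(R\setminus R')=\cl(R\setminus R')\cup R'$ and then invokes Bryant's identity $\cl(\cl(A)\cup B)=\cl(A\cup B)$ to land on $\cl(R)$, whereas you observe directly that the hypothesis together with extensivity gives $R=(R\setminus R')\cup R'\subseteq\cl(R\setminus R')$, and then monotonicity plus idempotence yield $\cl(R)\subseteq\cl(\cl(R\setminus R'))=\cl(R\setminus R')$, with the reverse inclusion immediate from monotonicity. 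Your version is thus entirely self-contained in the three closure axioms and avoids the external citation; this buys a slightly more elementary argument, at no cost, since the cited identity is itself a consequence of those axioms.
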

\begin{proof}
	If	$R'\subseteq \cl(R\setminus  R')$, then clearly 
	$\cl(R\setminus  R') = \cl(R\setminus  R')\cup R'$. 
	Therefore, 
	$\cl(R\setminus  R') =	\cl(\cl(R\setminus  R')) = \cl(\cl(R\setminus  R')\cup R')$.
	Theorem 3.1(8) in \cite{Bryant97} states that $\cl(\cl(A)\cup B) = \cl(A\cup B)$. 
	Hence, $\cl(\cl(R\setminus R')\cup R') = 	\cl((R\setminus R')\cup R') = \cl(R)$.
	Conversely, if $\cl(R\setminus  R') = \cl(R)$, then $R'\subseteq R \subseteq \cl(R)$ implies that
	$R' \subseteq \cl(R\setminus  R')$.

	Now let $R'\subseteq R$,  $r\in R'$ and assume that $\cl(R\setminus  R') = \cl(R)$. 
	Since $R\setminus R'\subseteq R\setminus \{r\}$, we have 
	$\cl(R) =  \cl(R\setminus R')  \subseteq  \cl(R\setminus \{r\}) \subseteq \cl(R)$.  
	Thus, $\cl(R\setminus \{r\}) = \cl(R)$.
\end{proof}

\section{Representative Triple Sets}
\label{sec:repT}

The closure $\cl(R)$ provides all information of further triples that 
are implied by a consistent triple set $R$. Nevertheless, there might be subsets $R'\subseteq R$
that provide the same information, that is, $\cl(R') = \cl(R)$.
See Figure \ref{fig:exmpl} for an example.

\begin{definition}	
	Let $R$ be a consistent triple set. A set $R'\subseteq R$ is \emph{representative}
	for $R$ if $\cl(R) = \cl(R')$. 
	The set $\SC(R)$\footnote{$\SC$ stands for ``same closure''} comprises all representative triple sets of $R$.
	Moreover, we put
	 \[\minimal(\SC(R))\coloneqq \{R'\in \SC(R) \colon R' \text{ is minimal w.r.t.\ inclusion }\}\]
	and 
	 \[\minimum(\SC(R))\coloneqq \{R'\in \SC(R) \colon |R'|\leq|R''| \text{ for any } R''\in \SC(R)\}.\,\]	
\end{definition}

It is easy to see that $\minimum(\SC(R))\subseteq \minimal(\SC(R))$. As we
shall see later, even $\minimum(\SC(R)) = \minimal(\SC(R))$ is satisfied.
In order to investigate the sets $\minimum(\SC(R))$ and $\minimal(\SC(R))$
in more detail, we utilize the Ahograph and, in particular, Theorem
\ref{thm:2cc}. Note, Theorem \ref{thm:2cc} implies that $\rt{ab|c} \in
\cl(R)$ if and only if there is a subset $\LT\subseteq L_R$ such that
$[R,\LT]$ has exactly two connected components $A$ and $B$, one containing
$a,b$ and the other $c$. These two connected components will play a major
role in the proof for matroid properties. Since there might be several
subsets $\LT$ of $L_R$ that satisfy the properties of Theorem \ref{thm:2cc}
for a given triple $\rt{ab|c}$, we collect the respective connected
components $A$ and $B$ in the set $\LrR{\rt{ab|c}}{R}$.

\begin{definition}
	Let $R$ be a consistent triple set and $\rt{ab|c}$ a triple with $a,b,c\in L_R$. 
	The set  \[\LrR{\rt{ab|c}}{R}\]  comprises all sets $\{A,B\}$ for which 
	$A,B\subseteq L_R$ and $[R,A\cup B]$
	has exactly two connected components $A$ and  $B$, 
	one containing $a$ and $b$ and the other containing $c$.
	\label{def:ltr}
\end{definition}

We emphasize that we do not assume that $\rt{ab|c}\in R$ in
Definition \ref{def:ltr}. The following lemma is an immediate consequence of Definition
\ref{def:ltr} and Theorem \ref{thm:2cc}. 

\begin{lemma}
	Let $R$ be a consistent triple set. Then, 
	\[\LrR{\rt{ab|c}}{R}\neq \emptyset \text{ if and only if } \rt{ab|c}\in \cl(R).\,\] 
	\label{lem:nonempty}
\end{lemma}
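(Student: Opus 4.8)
The statement to prove is Lemma~\ref{lem:nonempty}: for a consistent triple set $R$, we have $\LrR{\rt{ab|c}}{R}\neq\emptyset$ if and only if $\rt{ab|c}\in\cl(R)$. The plan is to observe that this is essentially a translation of Theorem~\ref{thm:2cc} into the language of Definition~\ref{def:ltr}, so the proof should be short and amounts to checking that the two formulations really coincide.

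First I would prove the direction ``$\LrR{\rt{ab|c}}{R}\neq\emptyset \Rightarrow \rt{ab|c}\in\cl(R)$''. Suppose $\{A,B\}\in\LrR{\rt{ab|c}}{R}$. By Definition~\ref{def:ltr}, $A,B\subseteq L_R$ and the Ahograph $[R,A\cup B]$ has exactly two connected components, namely $A$ and $B$, with $a,b$ in one and $c$ in the other. Setting $\LT\coloneqq A\cup B\subseteq L_R$, this is precisely the hypothesis of Theorem~\ref{thm:2cc}, so $\rt{ab|c}\in\cl(R)$.

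For the converse, ``$\rt{ab|c}\in\cl(R)\Rightarrow\LrR{\rt{ab|c}}{R}\neq\emptyset$'', I would apply Theorem~\ref{thm:2cc} to obtain a subset $\LT\subseteq L_R$ such that $[R,\LT]$ has exactly two connected components, one containing $a$ and $b$ and the other containing $c$; call these components $A$ and $B$ (so $A\cup B=\LT$ since the two components partition the vertex set $\LT$). Then $A,B\subseteq L_R$ and $[R,A\cup B]=[R,\LT]$ has exactly the two connected components $A$ and $B$ as required, so $\{A,B\}\in\LrR{\rt{ab|c}}{R}$ and hence $\LrR{\rt{ab|c}}{R}\neq\emptyset$.

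The only point requiring a moment's care — and the closest thing to an obstacle — is the bookkeeping that the vertex set of the Ahograph $[R,\LT]$ is exactly $\LT$, so that its two connected components $A$ and $B$ partition $\LT$ and therefore satisfy $A\cup B=\LT$; this makes the passage between ``there exists $\LT$'' in Theorem~\ref{thm:2cc} and ``there exist $A,B$ with $[R,A\cup B]$ having components $A,B$'' in Definition~\ref{def:ltr} lossless in both directions. Once that is noted, the equivalence is immediate, and indeed the excerpt already flags this as ``an immediate consequence of Definition~\ref{def:ltr} and Theorem~\ref{thm:2cc}''.
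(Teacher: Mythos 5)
Your proof is correct and matches the paper's treatment: the paper gives no separate argument, stating only that the lemma is an immediate consequence of Definition~\ref{def:ltr} and Theorem~\ref{thm:2cc}, and your two-direction translation (including the observation that the two components of $[R,\LT]$ partition $\LT$, so $A\cup B=\LT$) is exactly the routine verification being left implicit.
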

In what follows, we show that elements $\{\Amax,\Bmax\} \in
\LrR{\rt{ab|c}}{R}$ with $|\Amax \cup \Bmax| \geq |A \cup B|$ for all
$\{A,B\} \in \LrR{\rt{ab|c}}{R}$ are unique in $\LrR{\rt{ab|c}}{R}$ and
that $A$ must be a subset of $\Amax$ (resp. $\Bmax$) while $B$ is a subset
of $\Bmax$ (resp. $\Amax$). In other words, the Ahograph $[R,A\cup B]$ must
be a subgraph of $[R,\Amax\cup \Bmax]$, where one of the two connected
components of $[R,A\cup B]$ is entirely contained in $\Amax$ and the other
in $\Bmax$. To this end, we start with the following lemma.
\begin{lemma}
	Let $R$ be a consistent triple set and $\rt{ab|c}, \rt{a'b'|c'} \in \cl(R)$.
	Assume that $\{A,B\} \in \LrR{\rt{ab|c}}{R}$ and 
	$\{A',B'\} \in \LrR{\rt{a'b'|c'}}{R}$.
	If $A\cap A' \neq \emptyset$ and  $B\cap B' \neq \emptyset$, 
	then $\{A\cup A',B\cup B'\} \in \LrR{\rt{ab|c}}{R}\cap\LrR{\rt{a'b'|c'}}{R}$. 
	\label{lem:intersection-union}
\end{lemma}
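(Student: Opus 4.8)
The plan is to show directly that $\langle A\cup A'\rangle_G$ and $\langle B\cup B'\rangle_G$ are connected in $G=[R,(A\cup A')\cup(B\cup B')]$, that these are its only two connected components, and that $a,b$ lie together in one of them while $c$ lies in the other (and symmetrically for $a',b',c'$). First I would set $\LT_A\coloneqq A\cup B$ and $\LT_{A'}\coloneqq A'\cup B'$, so that $A$ is a connected component of $[R,\LT_A]$ and $A'$ is a connected component of $[R,\LT_{A'}]$. Since $A$ is connected in $[R,\LT_A]$, $A'$ is connected in $[R,\LT_{A'}]$, and $A\cap A'\neq\emptyset$ by hypothesis, Lemma~\ref{lem:cc} (applied with $\LT_A,\LT_{A'}$ in place of $\LT_A,\LT_B$) yields that $\langle A\cup A'\rangle$ is connected in $[R,\LT_A\cup\LT_{A'}]=G$. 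The identical argument with $B,B'$ in place of $A,A'$, using $B\cap B'\neq\emptyset$, shows $\langle B\cup B'\rangle$ is connected in $G$.

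Next I would argue there are no edges of $G$ between $A\cup A'$ and $B\cup B'$, which is the main obstacle and the heart of the lemma. Suppose for contradiction there is an edge linking some $x\in A\cup A'$ to some $y\in B\cup B'$; by definition of the Ahograph this means some triple $\rt{xy|z}\in R$ with $z\in (A\cup A')\cup(B\cup B')$. The point is that in $[R,\LT_A]$ (respectively $[R,\LT_{A'}]$) there is no edge between $A$ and $B$ (resp.\ between $A'$ and $B'$), since $A,B$ (resp.\ $A',B'$) are distinct connected components there. So the triple $\rt{xy|z}$ must have $x$ and $y$ lying in components that are "mixed" across the two Ahographs — e.g.\ $x\in A\setminus A'$ and $y\in B'\setminus B$, or the third leaf $z$ escaping the relevant leaf set $\LT_A$ or $\LT_{A'}$. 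I would handle this by a short case analysis on which of $\LT_A$, $\LT_{A'}$ contains $z$: if $z\in\LT_A$ then, since $x,y\in\LT_A$ as well (using $A\cup A'\cup B\cup B'\subseteq\LT_A\cup\LT_{A'}$ and a placement argument), the edge $xy$ would already be present in $[R,\LT_A]$, contradicting that $A$ and $B$ are separate components there — provided $x,y$ end up on opposite sides; the subtle part is ruling out the configuration where $x,y$ are both in $A$ or both in $B$ by noting such an edge would not connect $A\cup A'$ to $B\cup B'$ in the first place. The remaining case $z\notin\LT_A$ forces $z\in\LT_{A'}$, and a symmetric argument via $[R,\LT_{A'}]$ applies.

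With the two connected components identified — $\langle A\cup A'\rangle$ and $\langle B\cup B'\rangle$ with no edges between them, hence exactly these two components of $G$ — it remains to check the labelling conditions of Definition~\ref{def:ltr}. Since $\{A,B\}\in\LrR{\rt{ab|c}}{R}$, without loss of generality $a,b\in A$ and $c\in B$, so $a,b\in A\cup A'$ and $c\in B\cup B'$; thus $\{A\cup A',B\cup B'\}\in\LrR{\rt{ab|c}}{R}$. For the other triple I would first note that the orientation is forced: since $A\cap A'\neq\emptyset$ and $B\cap B'\neq\emptyset$ while $A'\cap B'=\emptyset$ (distinct components of $[R,\LT_{A'}]$) and $A\cap B=\emptyset$, the pair $\{A',B'\}$ must be "aligned" with $\{A,B\}$ in the sense that $a',b'$ lie in $A'$ and $c'$ in $B'$ — one cannot have $a',b'\in B'$ and $c'\in A'$, else $A$ and $A'$ (meeting non-trivially) would put $c'$ and $a'$ together contradicting... — this is again a small argument I would spell out using disjointness of $A'$ and $B'$. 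Hence $a',b'\in A\cup A'$ and $c'\in B\cup B'$, giving $\{A\cup A',B\cup B'\}\in\LrR{\rt{a'b'|c'}}{R}$, and the two memberships together complete the proof. The hard part throughout is the "no crossing edge" step; everything else is bookkeeping with Lemma~\ref{lem:cc} and Lemma~\ref{lem:subgraph}.
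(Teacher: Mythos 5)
Your skeleton starts the same way the paper does (Lemma \ref{lem:cc} gives connectivity of $\langle A\cup A'\rangle$ and $\langle B\cup B'\rangle$ in $[R,\LT']$ with $\LT'=A\cup A'\cup B\cup B']$), but your treatment of the decisive step --- that $A\cup A'$ and $B\cup B'$ are disjoint and have no edge between them --- has a genuine gap. You try to rule out a crossing edge by a case analysis that only consults the two original Ahographs $[R,\LT_A]$ and $[R,\LT_{A'}]$, and you never use the consistency of $R$. This cannot work: a triple $\rt{xy|z}\in R$ with, say, $x\in A$, $y\in B'\setminus \LT_A$ and $z\in \LT_{A'}\setminus\LT_A$ contributes an edge of $[R,\LT']$ that is invisible to both small graphs (in $[R,\LT_A]$ the endpoint $y$ is missing; in $[R,\LT_{A'}]$ the endpoint $x$ is missing), so neither ``the edge would already be present in $[R,\LT_A]$'' nor the ``symmetric argument via $[R,\LT_{A'}]$'' applies. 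A concrete instance showing the conclusion really hinges on consistency: $R=\{\rt{ab|c},\rt{ad|c},\rt{bc|d}\}$ gives $\{A,B\}=\{\{a,b\},\{c\}\}\in\LrR{\rt{ab|c}}{R}$ and $\{A',B'\}=\{\{a,d\},\{c\}\}\in\LrR{\rt{ad|c}}{R}$ with $A\cap A'\neq\emptyset$, $B\cap B'\neq\emptyset$, yet $[R,\{a,b,c,d\}]$ is connected because of the crossing edge $(b,c)$ supported by $\rt{bc|d}$ --- of course this $R$ is inconsistent, which is exactly the point: only consistency excludes such edges. The paper closes this step in one stroke: since $R$ is consistent, Theorem \ref{thm:ahograph} forces $[R,\LT']$ to be disconnected, and as the two connected sets $A\cup A'$ and $B\cup B'$ cover its vertex set, they must be precisely its two connected components (this simultaneously yields their disjointness, which your write-up silently assumes when speaking of ``edges between'' them, ignoring the a priori possibility that, e.g., $A'\cap B\neq\emptyset$).

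Two smaller points. First, your final ``the orientation is forced'' claim ($a',b'\in A'$ and $c'\in B'$) is neither proved nor needed: once $A\cup A'$ and $B\cup B'$ are known to be the two (disjoint) components, $a',b'$ lie together in whichever of them contains the part of $\{A',B'\}$ they belong to, and $c'$ lies in the other; membership in $\LrR{\rt{a'b'|c'}}{R}$ follows regardless of the orientation. Second, the labelling for $\rt{ab|c}$ is fine as you state it. So the fix is not cosmetic bookkeeping but the missing invocation of Theorem \ref{thm:ahograph}; with that replacement your argument becomes the paper's proof.
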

\begin{proof}
	Let $R$ be consistent and $\rt{ab|c}, \rt{a'b'|c'} \in \cl(R)$. 
	By Lemma \ref{lem:nonempty}, there are 
	$\{A,B\} \in \LrR{\rt{ab|c}}{R}$ and 
	$\{A',B'\} \in \LrR{\rt{a'b'|c'}}{R}$.
	Assume that $A\cap A' \neq \emptyset$ and  $B\cap B' \neq \emptyset$ and let  
   $\LT' = A\cup A' \cup B \cup B'$.

	By Lemma \ref{lem:subgraph},
	both $[R,A\cup B]$ and  $[R,A'\cup B']$ are subgraphs 
	of $[R,\LT']$. Moreover, by definition, $[R,A\cup B]$ and  $[R,A'\cup B']$ have two connected components $A,B$, resp., $A',B'$.
	Furthermore, since $A\cap A' \neq \emptyset$ and  $B\cap B' \neq \emptyset$
	we can apply Lemma \ref{lem:cc} and conclude that 
	 the induced subgraphs $\langle A\cup A'\rangle_{[R,\LT']}$ and 
	$\langle B\cup B'\rangle_{[R,\LT']}$ 	form connected subgraphs in $[R,\LT']$. 
	Moreover, since $R$ is consistent, Theorem \ref{thm:ahograph} implies that
	$[R,\LT']$ cannot be connected. Hence, $A\cup A'$ and 
	$B\cup B'$ must be the connected components in  $[R,\LT']$, 
	still one containing $a$ and $b$ (resp.\ $a',b'$) and the other $c$ (resp.\ $c'$)
	and therefore, $\{A\cup A',B\cup B'\} \in \LrR{\rt{ab|c}}{R}$
	(resp.\  $\{A\cup A',B\cup B'\} \in\LrR{\rt{a'b'|c'}}{R}$).
\end{proof}

\begin{figure}[tbp]
  \begin{center}
    \includegraphics[width=.9\textwidth]{./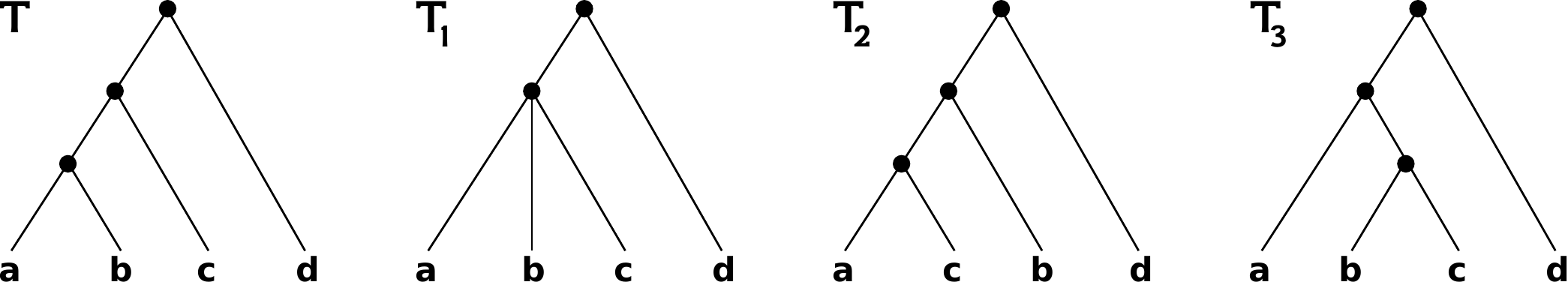}
  \end{center}
	\caption{
		Given the set $R = \{\rt{ab|c},\rt{ac|d},\rt{bc|d}\}$,  		
		there is only one tree $T$ that displays $R$ (shown left). 
		Thus,	$\cl(R) = \mc{R}(T) = \{\rt{ab|c},\rt{ac|d},\rt{bc|d},\rt{ab|d}\}$. 
		The subsets $R_1 = \{\rt{ab|c},\rt{ac|d}\}$ and 
		$R_2 =\{\rt{ab|c}, \rt{bc|d}\}$ are representative triple sets for $R$. 
		In particular, both $R_1$ and $R_2$ are minimal and have minimum size. 
		However, not all subsets of $R$ with size two are representative. 
		By way of example consider $R_3 = \{\rt{ac|d},\rt{bc|d}\}$. 
		Although $T$ displays $R_3$, there are three further trees $T_1,T_2$ and $T_3$ that display $R_3$  as well. 		
		Thus, $\cl(R_3) = \mc{R}(T) \cap \mc{R}(T_1) \cap \mc{R}(T_2) \cap \mc{R}(T_3)=  \{\rt{ac|d},\rt{bc|d},\rt{ab|d}\} \neq \cl(R)$.
    }
	\label{fig:exmpl}
\end{figure}

\begin{lemma}
	Let $R$ be a consistent triple set with $\rt{ab|c} \in \cl(R)$.
	Let $\{\Amax,\Bmax\} \in \LrR{\rt{ab|c}}{R}$ such that 
	$|\Amax\cup \Bmax| \geq 	|A\cup B|$ for all 
	$\{A,B\} \in \LrR{\rt{ab|c}}{R}$. 
	Then,  either $A\subseteq \Amax$ and $B\subseteq \Bmax$
	or $B\subseteq \Amax$ and $A\subseteq \Bmax$.

	Moreover, the element $\{\Amax,\Bmax\}$ in $\LrR{\rt{ab|c}}{R}$ with
		$|\Amax\cup \Bmax| \geq 	|A\cup B|$ for all 
	$\{A,B\} \in \LrR{\rt{ab|c}}{R}$ is unique.
	\label{lem:max1}
\end{lemma}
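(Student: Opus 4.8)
The plan is to derive both assertions of Lemma~\ref{lem:max1} from the ``union'' Lemma~\ref{lem:intersection-union}, applied with the single triple $\rt{ab|c}$ playing the role of both $\rt{ab|c}$ and $\rt{a'b'|c'}$. First I would fix an arbitrary $\{A,B\}\in\LrR{\rt{ab|c}}{R}$ and compare it with $\{\Amax,\Bmax\}$. The key observation is that, up to swapping the roles of $A$ and $B$, we may always assume $a,b$ lie in $A$ (resp.\ $\Amax$) and $c$ in $B$ (resp.\ $\Bmax$); this is legitimate because the elements of $\LrR{\rt{ab|c}}{R}$ are \emph{unordered} pairs $\{A,B\}$, and it is exactly this relabelling freedom that produces the two symmetric cases ``$A\subseteq\Amax,\ B\subseteq\Bmax$'' versus ``$B\subseteq\Amax,\ A\subseteq\Bmax$'' in the statement. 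With this normalization, $a,b\in A\cap\Amax$ and $c\in B\cap\Bmax$, so in particular $A\cap\Amax\neq\emptyset$ and $B\cap\Bmax\neq\emptyset$.

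Next I would invoke Lemma~\ref{lem:intersection-union} with $\{A,B\},\{A',B'\}\coloneqq\{\Amax,\Bmax\}$: it yields $\{A\cup\Amax,\ B\cup\Bmax\}\in\LrR{\rt{ab|c}}{R}$. Now I use the maximality of $|\Amax\cup\Bmax|$. Since $(A\cup\Amax)\cup(B\cup\Bmax) = (A\cup B)\cup(\Amax\cup\Bmax)\supseteq \Amax\cup\Bmax$, and this union is itself the ground set of an element of $\LrR{\rt{ab|c}}{R}$, the defining inequality for $\{\Amax,\Bmax\}$ forces $|(A\cup\Amax)\cup(B\cup\Bmax)| \le |\Amax\cup\Bmax|$, hence equality, hence $(A\cup B)\cup(\Amax\cup\Bmax) = \Amax\cup\Bmax$, i.e.\ $A\cup B\subseteq \Amax\cup\Bmax$. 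It remains to argue that $A$ does not ``spill over'' into $\Bmax$ and $B$ not into $\Amax$. For this I note that $\Amax$ and $\Bmax$ are the two connected components of $[R,\Amax\cup\Bmax]$, and that $\langle A\rangle$ is connected in $[R,A\cup B]$, which by Lemma~\ref{lem:subgraph} is a subgraph of $[R,\Amax\cup\Bmax]$; a connected vertex set in a graph cannot meet two distinct connected components, so $A\subseteq\Amax$ or $A\subseteq\Bmax$, and since $a\in A\cap\Amax$ we get $A\subseteq\Amax$. Symmetrically $B\subseteq\Bmax$. (One should double-check that $\langle A\rangle_{[R,A\cup B]}$ connected really does give $\langle A\rangle_{[R,\Amax\cup\Bmax]}$ connected: this follows since adding vertices to the second coordinate of an Ahograph only adds edges among the old vertices, never removes them — precisely Lemma~\ref{lem:subgraph} — so the connecting walks survive.)

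For uniqueness, suppose $\{\Amax,\Bmax\}$ and $\{\Amax_2,\Bmax_2\}$ both attain the maximum. Applying the first part with $\{\Amax_2,\Bmax_2\}$ in the role of $\{A,B\}$ gives (after the appropriate relabelling) $\Amax_2\subseteq\Amax$ and $\Bmax_2\subseteq\Bmax$; but then $|\Amax_2\cup\Bmax_2|\le|\Amax\cup\Bmax|$ with equality only if $\Amax_2=\Amax$ and $\Bmax_2=\Bmax$. Since the cardinalities are equal by assumption, the two pairs coincide, proving uniqueness.

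I expect the main obstacle to be bookkeeping the symmetry cleanly: the pairs in $\LrR{\rt{ab|c}}{R}$ are unordered, so every statement of the form ``$A\subseteq\Amax$'' is only meaningful after one has committed to a labelling, and Lemma~\ref{lem:intersection-union} as stated requires the intersections $A\cap A'$ and $B\cap B'$ to be nonempty \emph{for the chosen labelling} — feeding it the ``crossed'' labelling would instead require $A\cap\Bmax\neq\emptyset$, which need not hold. The fix is the normalization made at the outset (put $a,b$ on the left in both pairs), which guarantees the hypothesis of Lemma~\ref{lem:intersection-union} is met; the two alternatives in the conclusion of Lemma~\ref{lem:max1} then simply record that the original unnormalized $\{A,B\}$ could have had its components named either way. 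Everything else is a routine combination of Lemmas~\ref{lem:subgraph}, \ref{lem:cc}, and \ref{lem:intersection-union} together with the maximality hypothesis.
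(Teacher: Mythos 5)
Your proof is correct and follows essentially the same route as the paper: both arguments normalize the labelling so that $a,b$ and $c$ lie in corresponding parts, apply Lemma~\ref{lem:intersection-union} with $\rt{ab|c}$ in both roles to get $\{A\cup\Amax,\,B\cup\Bmax\}\in\LrR{\rt{ab|c}}{R}$, and then exploit the maximality of $|\Amax\cup\Bmax|$, with uniqueness as an immediate consequence. The only cosmetic difference is the final step: the paper deduces $A\subseteq\Amax$ directly from the disjointness of $A\cup\Amax$ and $B\cup\Bmax$ via a cardinality contradiction, whereas you first extract $A\cup B\subseteq\Amax\cup\Bmax$ and then use connectedness (Lemma~\ref{lem:subgraph}) to place $A$ inside $\Amax$.
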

\begin{proof}
	Let $R$ be a consistent triple set and $\rt{ab|c} \in \cl(R)$.
	By Lemma \ref{lem:nonempty}, the set $\LrR{\rt{ab|c}}{R}$ is not empty, and thus, there is an element  
	 $\{\Amax,\Bmax\} \in \LrR{\rt{ab|c}}{R}$ such that $|\Amax\cup \Bmax| \geq 	|A\cup B|$
	for all 	$\{A,B\} \in \LrR{\rt{ab|c}}{R}$.
	W.l.o.g.\ assume that 	
	$a,b \in A$ and $c\in B$ for some $\{A,B\} \in \LrR{\rt{ab|c}}{R}$. 
	There are two cases, either $a,b\in \Amax$ and $c\in \Bmax$ or, 
	$c\in \Amax$ and $a,b\in \Bmax$. 
	Let us first assume that $a,b\in \Amax$ and $c\in \Bmax$.
	Thus,  $A\cap \Amax \neq \emptyset$ and  $B\cap \Bmax \neq \emptyset$. 
	Lemma \ref{lem:intersection-union} implies that 
	$\{A\cup \Amax, B\cup \Bmax\}\in 	\LrR{\rt{ab|c}}{R}$  and, 
	by choice of $\Amax$ and $\Bmax$, 
	$|\Amax\cup \Bmax| \geq |\LT'|$ where $\LT' = \Amax\cup A\cup \Bmax \cup B$.

	We continue to show that $A\subseteq \Amax$ and $B\subseteq \Bmax$. 
	Since $\{A\cup \Amax, B\cup \Bmax\}\in 	\LrR{\rt{ab|c}}{R}$  we can conclude that 
	$(A\cup \Amax) \cap  (B\cup \Bmax) = \emptyset$. 
	Now, assume for contradiction that $A\not\subseteq \Amax$. 
	Thus, there is an $x\in A\setminus (\Amax \cup \Bmax)$
	and therefore, $\Amax \cup \Bmax \subsetneq (A\cup \Amax) \cup  (B\cup \Bmax)$. 
	Hence, $|(A\cup \Amax)\cup (B\cup\Bmax)|  > |\Amax\cup \Bmax|$; a contradiction. 
   Analogously,  $B\subseteq \Bmax$.

	The latter arguments immediately imply that 
	for any 	$\{\Amax_1,\Bmax_1\}, \{\Amax_2,\Bmax_2\}  \in  \LrR{\rt{ab|c}}{R}$
	with $|\Amax_1\cup \Bmax_1| = |\Amax_2\cup \Bmax_2| \geq 	|A\cup B|$
	for all $\{A,B\}\in  \LrR{\rt{ab|c}}{R}$ it must hold 
	$\{\Amax_1,\Bmax_1\} = \{\Amax_2,\Bmax_2\}$. 
\end{proof}

For our results it will be convenient to explicitly name the 
unique element $\{\Amax,\Bmax\}$ that has maximum cardinality 
$|\Amax\cup \Bmax|$ in $\LrR{\rt{ab|c}}{R}$ as defined  next.

\begin{definition}
	Let $R$ be a consistent triple set with $\rt{ab|c} \in \cl(R)$.
	Then,  \[\lmax{\rt{ab|c}}{R}\] denotes the unique element 
	$\{\Amax,\Bmax\} \in \LrR{\rt{ab|c}}{R}$ for which
	$|\Amax\cup \Bmax| \geq 	|A\cup B|$ for all 
	$\{A,B\} \in \LrR{\rt{ab|c}}{R}$. 
	
	Moreover, for a subset $R' \subseteq \cl(R)$ we set
	\[\Lmax{R'}{R}\coloneqq \bigcup_{\rt{ab|c}\in R'} \{\lmax{\rt{ab|c}}{R}\}.\,\] 
\end{definition}

It is easy to verify that $|\Lmax{R}{R}|\leq |R|$. In what follows, we will show that
for any consistent set $R$ the sets $\Lmax{R}{R}, \Lmax{\cl(R)}{R}$ and $\Lmax{\cl(R)}{\cl(R)}$
are identical. This in turn is used to show that $\Lmax{R}{R} = \Lmax{R'}{R'}$
 whenever $\cl(R')=\cl(R)$ for some $R'\subseteq R$.
In particular, 
the elements in  $\Lmax{R}{R}$ and $\Lmax{R'}{R'}$  are identical w.r.t.\ to a given triple $\rt{ab|c}\in \cl(R)$, that is, 
 $\lmax{\rt{ab|c}}{R} = \lmax{\rt{ab|c}}{R'}$ for any $\rt{ab|c}\in \cl(R)$.
Hence, if $\lmax{\rt{ab|c}}{R} = \{A,B\} $ and $\lmax{\rt{ab|c}}{R'}=\{A',B'\}$, 
then $[R,A\cup B]$  and $[R',A'\cup B']$ have the same two connected components. 
Note, the latter does not imply that the Ahographs 
$[R,A\cup B]$  and $[R',A'\cup B']$ 
are isomorphic. 
We refer to Figure \ref{fig:Wexmpl} for an
illustrative example.
To establish these results we provide first the following lemma.

\begin{lemma}
	Let $R$ be a consistent triple set.
	Assume that there are distinct $\{A',B'\}, \{A,B\} \in \Lmax{R}{R}$.
	If $A'\cap(A\cup B)\neq \emptyset$ and $B'\cap(A\cup B)\neq \emptyset$,
 	then $A'\cup B'\subseteq A$ or  $A'\cup B'\subseteq B$. 
	\label{lem:intersection-subset}
\end{lemma}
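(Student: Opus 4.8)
The plan is to realize the two given elements as $\{A,B\}=\lmax{\rt{ab|c}}{R}$ and $\{A',B'\}=\lmax{\rt{a'b'|c'}}{R}$ for suitable triples $\rt{ab|c},\rt{a'b'|c'}\in R$, with (after relabeling) $a,b\in A$, $c\in B$, $a',b'\in A'$ and $c'\in B'$, and then to organize everything around how $A'$ and $B'$ meet the partition $\{A,B\}$. First I would record a ``no crossing'' claim: if $A\cap A'\neq\emptyset$ and $B\cap B'\neq\emptyset$ (or, symmetrically, $A\cap B'\neq\emptyset$ and $B\cap A'\neq\emptyset$), then Lemma \ref{lem:intersection-union} gives $\{A\cup A',B\cup B'\}\in\LrR{\rt{ab|c}}{R}\cap\LrR{\rt{a'b'|c'}}{R}$, and applying the maximality and uniqueness part of Lemma \ref{lem:max1} on \emph{both} sides forces $A=A'$ and $B=B'$, contradicting distinctness. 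Combining the two hypotheses $A'\cap(A\cup B)\neq\emptyset$ and $B'\cap(A\cup B)\neq\emptyset$ with this claim --- and relabeling $A\leftrightarrow B$ if necessary, which is harmless since the conclusion and all remaining tools are symmetric in $A$ and $B$ --- one is reduced to the single configuration $A'\cap A\neq\emptyset$, $B'\cap A\neq\emptyset$, $A'\cap B=\emptyset$, $B'\cap B=\emptyset$; the goal then becomes $A'\cup B'\subseteq A$.

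In that configuration I would set $K\coloneqq A\cup A'\cup B'$ and $\LT\coloneqq A\cup B\cup A'\cup B'$, so that $\LT=K\cup B$ with $K\cap B=\emptyset$, because $A'\cup B'$ is disjoint from $B$. By Lemma \ref{lem:subgraph} the sets $A,B,A',B'$ each induce connected subgraphs of $[R,\LT]$, and repeated use of Lemma \ref{lem:cc} (first joining $A$ to $A'$ and to $B'$, then joining the two resulting sets along the common part $A$) shows that $K$ induces a connected subgraph of $[R,\LT]$. Since $R$ is consistent and $|\LT|\geq|A|\geq 2$, Theorem \ref{thm:ahograph} forces $[R,\LT]$ to be disconnected; as $\LT=K\cup B$ with $K\cap B=\emptyset$ and each of $K,B$ lying inside a single connected component, the only possibility is that $K$ and $B$ are exactly the two connected components of $[R,\LT]$. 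Since $a,b\in A\subseteq K$ and $c\in B$, this means $\{K,B\}\in\LrR{\rt{ab|c}}{R}$, so Lemma \ref{lem:max1} --- the alternative $K\subseteq B$ being impossible, as $a\in K\cap A$ while $A\cap B=\emptyset$ --- yields $K\subseteq A$; hence $A'\cup B'\subseteq K\subseteq A$, as desired.

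I expect the main obstacle to be the case handling rather than any single computation. One must verify that the ``crossing'' configurations genuinely cannot occur under the distinctness hypothesis; this is not a vacuous point, because in those configurations the asserted inclusion would actually fail (one would get $A'\subseteq A$ and $B'\subseteq B$ with both sides nonempty), so it is essential to exploit that \emph{both} $\{A,B\}$ and $\{A',B'\}$ are of maximal type and to invoke Lemma \ref{lem:max1} on each of them. A second pitfall to avoid is reasoning inside $[R,A]$ or $[R,B]$ in isolation: by consistency these Ahographs are themselves disconnected, so the decisive step is to keep all of $\LT$ together and to use that $A'\cup B'$ is disjoint from $B$, which is precisely what lets $\LT$ split cleanly into $K$ and $B$. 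The remaining bookkeeping --- keeping track of which part of each pair contains the relevant leaves so that the ``either/or'' in Lemma \ref{lem:max1} always resolves the right way, and the $A\leftrightarrow B$ relabeling --- is routine.
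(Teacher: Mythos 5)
Your proposal is correct and takes essentially the same route as the paper: realize the two pairs as $\lmax{\rt{ab|c}}{R}$ and $\lmax{\rt{a'b'|c'}}{R}$, kill the ``crossing'' configurations via Lemma \ref{lem:intersection-union} plus the maximality/uniqueness of Lemma \ref{lem:max1} and distinctness, and in the remaining configuration use Lemmas \ref{lem:subgraph} and \ref{lem:cc} with Theorem \ref{thm:ahograph} to show that $A\cup A'\cup B'$ and $B$ are exactly the two connected components of $[R,A\cup B\cup A'\cup B']$, so maximality of $\{A,B\}$ forces $A'\cup B'\subseteq A$. The only organisational difference is that the paper first rules out the configurations where $A'$ (or $B'$) meets both $A$ and $B$ by a separate direct connectivity argument, whereas you absorb them into the crossing claim --- which is legitimate, since any such straddling configuration necessarily contains a crossing pairing, and your final step replaces the paper's explicit cardinality comparison by the containment alternative of Lemma \ref{lem:max1}, an equivalent use of maximality.
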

\begin{proof}
	If  $\{A,B\},\{A',B'\}$ are distinct elements of $\Lmax{R}{R}$, then there are distinct triples 
	$\rt{ab|c}$ and $\rt{a'b'|c'}$  in $R$ such that $\lmax{\rt{ab|c}}{R}= \{A,B\}$
	and $\lmax{\rt{a'b'|c'}}{R}= \{A',B'\}$.
	Assume that $A'\cap(A\cup B)\neq \emptyset$ and $B'\cap(A\cup B)\neq \emptyset$.

	First consider the case $A'\cap A\neq \emptyset$ \emph{and} $A'\cap B\neq \emptyset$. 
   Lemma \ref{lem:cc}  implies that 
	the induced subgraph $\langle A'\cup A\cup B\rangle$
 	of $[R, A\cup B\cup A'\cup B']$ is connected.
	Since $B'\cap(A\cup B)\neq \emptyset$ and $B'$ is a connected component in $[R, A'\cup B']$, 
	we can apply Lemma \ref{lem:subgraph} and \ref{lem:cc}  and conclude that 
	$[R, A\cup B\cup A'\cup B']$ is a connected graph; a contradiction to Theorem \ref{thm:2cc}.
	Hence, the case $A'\cap A\neq \emptyset$ \emph{and} $A'\cap B\neq \emptyset$ cannot occur. 
	Similarly, $B'\cap A\neq \emptyset$ \emph{and} $B'\cap B\neq \emptyset$	is not possible.
	Thus, we have either $A'\cap A\neq \emptyset$ or $A'\cap B\neq \emptyset$ as well as,	
	either $B'\cap A\neq \emptyset$ or $B'\cap B\neq \emptyset$.

	First assume that $A'\cap A\neq \emptyset$ and $B'\cap B\neq \emptyset$.
	By Lemma \ref{lem:intersection-union}, $\{A\cup A',B\cup B'\} \in \LrR{\rt{ab|c}}{R} \cap\LrR{\rt{a'b'|c'}}{R}$.
	Thus, by Lemma \ref{lem:max1} we have, on the one hand,
	$A\cup A'\subseteq A$ and $B\cup B'\subseteq B$
	and, one the other hand, $A\cup A'\subseteq A'$ and $B\cup B'\subseteq B'$.
	Hence, $A=A'$ and $B=B'$; a contradiction since we assumed that $\{A,B\}$ and $\{A',B'\}$
	are distinct. 
	Thus, the case $A'\cap A\neq \emptyset$ and $B'\cap B\neq \emptyset$ cannot occur. 
	Similarly, the case $B'\cap A\neq \emptyset$ and $A'\cap B\neq \emptyset$ is impossible.

	Therefore, we are left with two exclusive cases: (1) $A'\cap A\neq \emptyset$ and $B'\cap A\neq \emptyset$
	or (2) $A'\cap B\neq \emptyset$ and $B'\cap B\neq \emptyset$.
	Let us assume case (1) $A'\cap A\neq \emptyset$ and $B'\cap A\neq \emptyset$.
	Repeated application of Lemma \ref{lem:cc} shows that the induced subgraph $\langle A\cup A'\cup B'\rangle$
	of $[R, A\cup B\cup A'\cup B']$ is connected.
	Since $R$ is consistent, Theorem \ref{thm:2cc} implies that the graph $[R, A\cup B\cup A'\cup B']$ must be disconnected. 
	Hence, $[R, A\cup B\cup A'\cup B']$ has as connected components $A\cup A'\cup B'$
	and $B$. Therefore, $\{A\cup A'\cup B', B\} \in \LrR{\rt{ab|c}}{R}$.
	Now it must hold that  $A'\cup B'\subseteq A$  as otherwise
	$|A\cup A'\cup B'\cup B|>|A\cup B| = \lmax{\rt{ab|c}}{R}$ would yield a contradiction. 
	In case (2) it is shown analogously that $A'\cup B'\subseteq B$.
\end{proof}

Lemma \ref{lem:intersection-subset} immediately implies the following
\begin{cor}
	Let $R$ be a consistent triple set.
	Assume that there are distinct $\{A,B\}, \{A',B'\} \in \Lmax{R}{R}$. 
	If $A\cap A'\neq \emptyset$, then $B\cap B'=\emptyset$. 
	\label{cor:Bempty}
\end{cor}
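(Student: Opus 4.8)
The plan is to derive the corollary directly from Lemma \ref{lem:intersection-subset} by contraposition. Suppose $\{A,B\}$ and $\{A',B'\}$ are distinct elements of $\Lmax{R}{R}$ with $A\cap A'\neq\emptyset$, and assume for contradiction that also $B\cap B'\neq\emptyset$. First I would record the elementary fact that the two sets in any element of $\Lmax{R}{R}$ are disjoint: by definition of $\Lmax{R}{R}$ there is a triple $\rt{ab|c}\in R$ with $\lmax{\rt{ab|c}}{R}=\{A,B\}$, and by Definition \ref{def:ltr} the sets $A$ and $B$ are the two connected components of $[R,A\cup B]$, hence $A\cap B=\emptyset$; likewise $A'\cap B'=\emptyset$.

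Next I would check that the hypotheses of Lemma \ref{lem:intersection-subset} are met: since $A\cap A'\neq\emptyset$ we have $A'\cap(A\cup B)\neq\emptyset$, and since $B\cap B'\neq\emptyset$ we have $B'\cap(A\cup B)\neq\emptyset$. Lemma \ref{lem:intersection-subset} therefore yields $A'\cup B'\subseteq A$ or $A'\cup B'\subseteq B$. In the first case $B'\subseteq A$, and together with $B\cap B'\neq\emptyset$ this forces $A\cap B\neq\emptyset$, contradicting $A\cap B=\emptyset$. In the second case $A'\subseteq B$, and together with $A\cap A'\neq\emptyset$ this again forces $A\cap B\neq\emptyset$, a contradiction. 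Hence the assumption $B\cap B'\neq\emptyset$ is untenable, so $B\cap B'=\emptyset$.

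Since the argument is just an unwinding of the disjointness of connected components together with the dichotomy already established in Lemma \ref{lem:intersection-subset}, I do not expect any genuine obstacle here; the only point that needs a moment's care is making the symmetry explicit — that the single hypothesis ``$A\cap A'\neq\emptyset$'' of the corollary already suffices to trigger Lemma \ref{lem:intersection-subset} in its stated form (with $\{A,B\}$ as the ``ambient'' pair and $\{A',B'\}$ the other), and that both branches of its conclusion collapse once $B\cap B'\neq\emptyset$ is added. An equally short alternative would bypass Lemma \ref{lem:intersection-subset} and argue straight from Lemmas \ref{lem:intersection-union} and \ref{lem:max1}: from $A\cap A'\neq\emptyset$ and $B\cap B'\neq\emptyset$ the set $\{A\cup A',B\cup B'\}$ would lie in both $\LrR{\rt{ab|c}}{R}$ and $\LrR{\rt{a'b'|c'}}{R}$, and the maximality defining $\lmax{\rt{ab|c}}{R}$ and $\lmax{\rt{a'b'|c'}}{R}$ would force $A=A'$ and $B=B'$, contradicting distinctness.
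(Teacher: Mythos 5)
Your proof is correct and follows essentially the same route as the paper: assume both intersections are non-empty, invoke Lemma \ref{lem:intersection-subset} to get $A'\cup B'\subseteq A$ or $A'\cup B'\subseteq B$, and contradict disjointness of the components. In fact your version is slightly leaner, since you apply the lemma only once and contradict $A\cap B=\emptyset$ directly, whereas the paper applies it a second time in the other direction and contradicts $B'\subseteq A'$ resp.\ $B\subseteq A$; your sketched alternative via Lemmas \ref{lem:intersection-union} and \ref{lem:max1} is also sound and mirrors a subcase inside the paper's proof of Lemma \ref{lem:intersection-subset}.
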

\begin{proof}
		Assume that $A\cap A'\neq \emptyset$ and $B\cap B'\neq \emptyset$. 
	Hence, $A'\cap(A\cup B)\neq \emptyset$ and $B'\cap(A\cup B)\neq \emptyset$.
	Lemma \ref{lem:intersection-subset} implies that
	$A'\cup B'\subseteq A$ or  $A'\cup B'\subseteq B$.
	W.l.o.g.\ assume that $A'\cup B'\subseteq A$.  
	Analogously, Lemma \ref{lem:intersection-subset} implies that 
	$A\cup B\subseteq A'$ or  $A\cup B\subseteq B'$.
	Now,  $A'\cup B'\subseteq A$ and $A\cup B\subseteq A'$    
	would imply that $B' \subseteq A'$; a contradiction. 
	Furthermore, if $A\cup B\subseteq B'$, then
  $A'\cup B'\subseteq A $ 
	would imply that $B \subseteq A$; again a contradiction. 
\end{proof}

\begin{figure}[t]
  \begin{center}
    \includegraphics[width=.8\textwidth]{./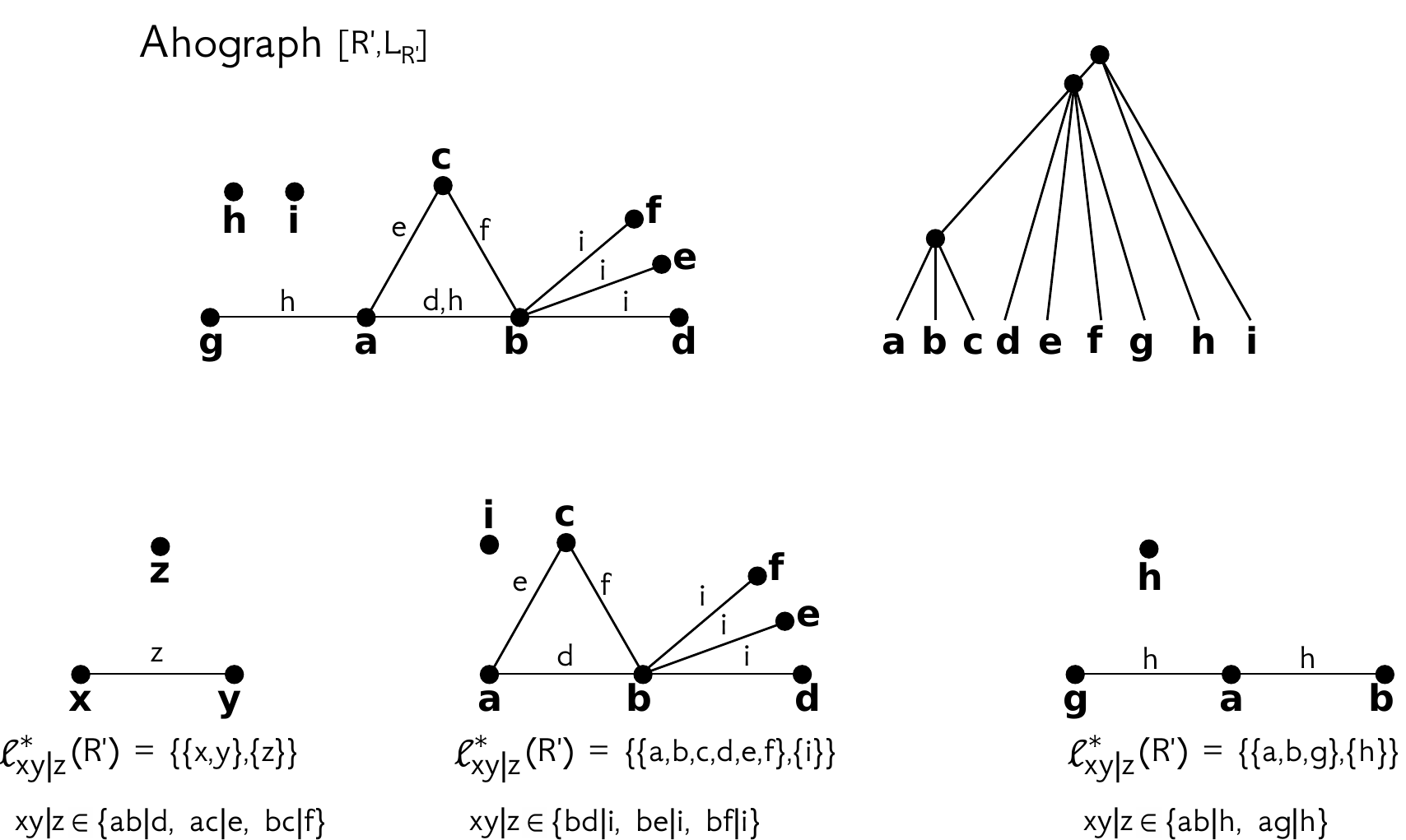}
  \end{center}
	\caption{
		Consider the triple set 
		$R = \{\rt{ab|d},\rt{ab|h},\rt{ac|e},\rt{ag|h},
				 \rt{bc|f},\rt{bc|i},\rt{bd|i},\rt{be|i},\rt{bf|i},\rt{bg|h}\}$
		and let $R'=R\setminus \{\rt{bc|i},\rt{bg|h}\}$. In this example, $L_{R'} = L_{R}$.
		Clearly, any tree that display $R'$ and thus, 
		$\{\rt{bc|f}, \rt{bf|i}\}$, resp., $\{\rt{ab|h}, \rt{ag|h}\}$, 
		must  also display $\rt{bc|i}$, resp., $\rt{bg|h}$ \cite{Dekker86}.
		Thus, $\{\rt{bc|i}, \rt{bg|h}\} \subseteq \cl(R') = \cl(R\setminus \{\rt{bc|i},\rt{bg|h}\})$.
		Lemma \ref{lem:identCl} implies that $\cl(R') = \cl(R)$ and therefore, 
		$R'\in\SC(R)$. 
		The Ahograph $[R',L_{R'}]$ and a tree $T$ that displays $R'$ is shown
		on the top of this figure. In $[R',L_{R'}]$ each edge $(x,y)$ is labeled
		with $z$ that corresponds to the respective triple $\rt{xy|z}$ that 
		supports the edge $(x,y)$. \newline
		Since $R\subseteq \cl(R) = \cl(R')$,  the tree $T$ also displays $R$.  The respective maximal elements 
		$\lmax{\rt{xy|z}}{R'} =\{A,B\} \in \Lmax{R'}{R'}$ with corresponding 
		Ahographs $[R',A\cup B]$ are depicted below. 	
		It is easy to verify that each triple $\rt{xy|z} \in R'$ is a bridge 	
		in the respective Ahograph $[R',A\cup B]$ and hence, $R'$ is minimal (cf.\ Lemma \ref{lem:bridge}).
		By Theorem \ref{thm:min=MIN},  $R'$ has also minimum cardinality. 
		Note, $\cl(R')=\cl(R)$ and Theorem \ref{thm:equals} imply that 
		$\Lmax{R'}{R'} = \Lmax{\cl(R)}{\cl(R)} = \Lmax{R}{R}$. 
		In this example,
 		$\Lmax{R}{R} = \big\{ \big\{\{a,b\},\{d\}\big\}, \big\{\{a,c\},\{e\}\big\}, \big\{\{b,c\},\{f\}\big\}, 
							\big\{\{a,b,c,d,e,f\},\{i\}\big\}, \big\{\{a,b,g\},\{h\}\big\}\big\}$.
		In order to determine $\cl(R)$ it suffices to add for 
		each $\{A,B\} \in \Lmax{R}{R}$ all triples $\rt{xy|z}$ with $x,y\in A, z\in B$
		or $z\in A, x,y\in B$	to $\cl(R)$ (cf.\, Thm.\, \ref{thm:cl-R}).
		Finally, application of Theorem \ref{thm:cor:boundary} shows that $R'$ does not 
	   identify $T$, since $9 = B(T) > |R'|=8$ and thus, $\cl(R')\neq \mathcal{R}(T)$. Moreover, since 
		$\cl(R) = \cl(R')$ neither $R$ identifies $T$.
    }
	\label{fig:Wexmpl}
\end{figure}

\begin{theorem}
	For any consistent triple set $R$ it holds that	
	\[\Lmax{R}{R} = \Lmax{\cl(R)}{R} = \Lmax{\cl(R)}{\cl(R)}.\,\]
	\label{thm:equals}
\end{theorem}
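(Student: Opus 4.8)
The plan is to prove the two equalities separately, and in each case to show that the maximal element $\lmax{\rt{ab|c}}{\cdot}$ attached to a fixed triple $\rt{ab|c}$ does not change when we enlarge the triple set from $R$ to $\cl(R)$ on the one side, or restrict the Ahograph computation from $\cl(R)$ to $R$ on the other side. The key observation I would use throughout is that, for any $\LT\subseteq L_R$, Lemma~\ref{lem:subgraph} gives $[R,\LT]\subseteq[\cl(R),\LT]$; so adding triples can only merge connected components, never split them. Since for $\LT = A\cup B$ the graph $[R,A\cup B]$ already has \emph{exactly} two connected components $A$ and $B$ by Definition~\ref{def:ltr}, and since consistency of $\cl(R)$ together with Theorem~\ref{thm:ahograph} forbids $[\cl(R),A\cup B]$ from being connected, the graph $[\cl(R),A\cup B]$ must still have exactly the two components $A$ and $B$. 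This is the whole content of the inclusion $\LrR{\rt{ab|c}}{R}\subseteq\LrR{\rt{ab|c}}{\cl(R)}$.

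First I would establish $\Lmax{\cl(R)}{R}=\Lmax{\cl(R)}{\cl(R)}$. The index set of the union is the same ($\cl(R)$), so it suffices to show $\lmax{\rt{ab|c}}{R}=\lmax{\rt{ab|c}}{\cl(R)}$ for every $\rt{ab|c}\in\cl(R)$. By Lemma~\ref{lem:nonempty} both sets are nonempty (note $\cl(\cl(R))=\cl(R)$). From the paragraph above, $\LrR{\rt{ab|c}}{R}\subseteq\LrR{\rt{ab|c}}{\cl(R)}$, hence the maximum of $|A\cup B|$ over $\LrR{\rt{ab|c}}{\cl(R)}$ is at least that over $\LrR{\rt{ab|c}}{R}$; call the maximizers $\{\Amax',\Bmax'\}$ and $\{\Amax,\Bmax\}$ respectively. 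For the reverse, I would argue that $\{\Amax',\Bmax'\}$ actually lies in $\LrR{\rt{ab|c}}{R}$: the graph $[R,\Amax'\cup\Bmax']$ is a subgraph of $[\cl(R),\Amax'\cup\Bmax']$, it cannot be connected (consistency of $R$ plus Theorem~\ref{thm:ahograph}), and it must still separate $\{a,b\}$ from $c$ — but a priori it could have \emph{more} than two components. Here is where I would need a small extra step: show that $[R,\Amax'\cup\Bmax']$ has exactly two components. I expect to get this from Theorem~\ref{thm:2cc} / the definition applied to the triple $\rt{ab|c}$ together with the fact that refining the component $\Amax'$ (or $\Bmax'$) into smaller pieces while keeping $a,b$ together and $c$ separate would exhibit a strictly smaller witness; combined with uniqueness (Lemma~\ref{lem:max1}) applied inside $\LrR{\rt{ab|c}}{\cl(R)}$, one concludes $\{\Amax',\Bmax'\}\in\LrR{\rt{ab|c}}{R}$, whence by maximality $|\Amax\cup\Bmax|\ge|\Amax'\cup\Bmax'|$ and the two maximizers coincide by Lemma~\ref{lem:max1}.

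Next I would prove $\Lmax{R}{R}=\Lmax{\cl(R)}{R}$. Since $R\subseteq\cl(R)$, the inclusion $\Lmax{R}{R}\subseteq\Lmax{\cl(R)}{R}$ is immediate from the definition of $\Lmax{\cdot}{\cdot}$ as a union indexed by triples, once we know $\lmax{\rt{ab|c}}{R}$ is the same whether we regard $\rt{ab|c}$ as coming from $R$ or from $\cl(R)$ — which is trivial since $\lmax{\rt{ab|c}}{R}$ depends only on $R$ and the triple, not on the index set. For the reverse inclusion I must show that every $\lmax{\rt{ab|c}}{R}$ with $\rt{ab|c}\in\cl(R)$ already equals $\lmax{\rt{a'b'|c'}}{R}$ for some $\rt{a'b'|c'}\in R$. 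This is the step I expect to be the main obstacle. The idea: let $\{\Amax,\Bmax\}=\lmax{\rt{ab|c}}{R}$, so $[R,\Amax\cup\Bmax]$ has the two components $\Amax\ni a,b$ and $\Bmax\ni c$. The edge between $a$ and $b$ in this graph (which exists since $a,b$ are in the same component and, after possibly moving to a spanning structure, some edge incident to $a$ or $b$ witnesses connectivity) is supported by an actual triple $\rt{xy|z}\in R$ with $x,y\in\Amax$, $z\in\Amax\cup\Bmax$; more carefully, since $\Bmax$ is a whole component, \emph{every} edge of $[R,\Amax\cup\Bmax]$ that touches $\Bmax$ has both endpoints in $\Bmax$, so picking any triple $\rt{xy|z}\in R$ that "lives inside" $\Amax\cup\Bmax$ with $z$ on the opposite side, we get $\{\Amax,\Bmax\}\in\LrR{\rt{xy|z}}{R}$. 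Then I would show $\{\Amax,\Bmax\}=\lmax{\rt{xy|z}}{R}$: by Lemma~\ref{lem:max1} the maximizer for $\rt{xy|z}$, say $\{A^{\star}_1,B^{\star}_1\}$, satisfies $\Amax\subseteq A^{\star}_1$, $\Bmax\subseteq B^{\star}_1$ (up to swapping), and then Corollary~\ref{cor:Bempty} or a direct argument comparing with $\{\Amax,\Bmax\}\in\LrR{\rt{ab|c}}{R}$ forces equality — because both elements live in $\LrR{\rt{xy|z}}{R}$ once one checks $\{A^{\star}_1,B^{\star}_1\}\in\LrR{\rt{ab|c}}{R}$ via $a,b\in\Amax\subseteq A^{\star}_1$, $c\in\Bmax\subseteq B^{\star}_1$, so maximality of $\{\Amax,\Bmax\}$ for $\rt{ab|c}$ gives $|A^{\star}_1\cup B^{\star}_1|\le|\Amax\cup\Bmax|$, hence equality. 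That closes the chain, and combining the two equalities gives the theorem. The delicate point to get exactly right is the bookkeeping about \emph{which} triple of $R$ supplies an edge realizing the partition $\{\Amax,\Bmax\}$, and verifying that the two-component condition is preserved in both directions; everything else is an application of Lemmas~\ref{lem:subgraph}, \ref{lem:cc}, \ref{lem:intersection-union}, \ref{lem:max1} and Theorems~\ref{thm:ahograph}, \ref{thm:2cc}.
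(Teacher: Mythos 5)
Your overall decomposition is the same as the paper's (prove the two equalities by showing the per-triple maximizers coincide), and your ``easy'' inclusions are fine: by Lemma~\ref{lem:subgraph} enlarging the triple set only merges components, and Theorem~\ref{thm:ahograph} forbids $[\cl(R),A\cup B]$ from becoming connected, so $\LrR{\rt{ab|c}}{R}\subseteq\LrR{\rt{ab|c}}{\cl(R)}$. However, both of the genuinely hard steps are left open. For $\Lmax{\cl(R)}{R}\subseteq\Lmax{R}{R}$ you need a triple of $R$ that \emph{realizes} the pair $\{\Amax,\Bmax\}=\lmax{\rt{ab|c}}{R}$, and you simply ``pick any triple $\rt{xy|z}\in R$ living inside $\Amax\cup\Bmax$ with $z$ on the opposite side'' --- but the existence of such a cross triple is exactly what has to be proved. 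An edge inside the component containing $a,b$ is supported by some $\rt{xy|z}\in R$ with $z\in\Amax\cup\Bmax$, and nothing in your argument rules out that $z$ lies in the \emph{same} component for every such edge. The paper closes this by noting that otherwise all edges inside each component would come from triples whose three leaves lie entirely in that component, so $[R,\Amax]$ (which has at least two vertices since $a,b\in\Amax$) would be connected, contradicting Theorem~\ref{thm:ahograph}; some argument of this kind is missing from your proposal. Your subsequent step --- once a cross triple $\rt{xy|z}$ is found, using Lemma~\ref{lem:max1} to get $\Amax\subseteq A^{\star}_1$, $\Bmax\subseteq B^{\star}_1$, observing $\{A^{\star}_1,B^{\star}_1\}\in\LrR{\rt{ab|c}}{R}$ and invoking maximality of $\{\Amax,\Bmax\}$ for $\rt{ab|c}$ --- does match the paper and is correct.

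The larger gap is in your proof of $\Lmax{\cl(R)}{R}=\Lmax{\cl(R)}{\cl(R)}$, where you must show that the maximizer $\{\Amax',\Bmax'\}$ over $\LrR{\rt{ab|c}}{\cl(R)}$ already lies in $\LrR{\rt{ab|c}}{R}$. Your sketch (``refining $\Amax'$ while keeping $a,b$ together and $c$ separate would exhibit a strictly smaller witness; combined with uniqueness from Lemma~\ref{lem:max1}'') does not work: exhibiting a smaller element of $\LrR{\rt{ab|c}}{R}$ contradicts nothing, since these sets contain non-maximal elements; the uniqueness in Lemma~\ref{lem:max1} holds within a single set $\LrR{\rt{ab|c}}{R}$, not across $R$ versus $\cl(R)$; and because $\rt{ab|c}$ need only belong to $\cl(R)$, there is not even an a priori guarantee that $a$ and $b$ remain in one component of $[R,\Amax'\cup\Bmax']$. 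The paper's actual argument is the idea you are missing: with $\{A,B\}=\lmax{\rt{ab|c}}{R}$, which by your easy direction lies in $\LrR{\rt{ab|c}}{\cl(R)}$, Lemma~\ref{lem:max1} gives $A\subseteq\Amax'$ and $B\subseteq\Bmax'$ (up to swapping); if some $d\in(\Amax'\cup\Bmax')\setminus(A\cup B)$ existed, then Theorem~\ref{thm:2cc} applied to $[\cl(R),\Amax'\cup\Bmax']$ yields $\rt{ad|c}\in\cl(\cl(R))=\cl(R)$ (symmetrically if $d$ lies on the $c$-side), a second application of Theorem~\ref{thm:2cc} produces an $R$-witness $\{A',B'\}\in\LrR{\rt{ad|c}}{R}$, and Lemma~\ref{lem:intersection-union} merges it with $\{A,B\}$ into a strictly larger element of $\LrR{\rt{ab|c}}{R}$ --- contradicting maximality of $\{A,B\}$, so no such $d$ exists and the maximizers coincide. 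Without an argument of this type your ``small extra step'' is not small; it is the heart of the theorem.
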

\begin{proof}
	We start with showing $\Lmax{R}{R} = \Lmax{\cl(R)}{R}$. 
	Since $R\subseteq \cl(R)$, we also have 
	$\Lmax{R}{R} \subseteq \Lmax{\cl(R)}{R}$. 

	To see that $\Lmax{\cl(R)}{R} \subseteq \Lmax{R}{R}$, let
	 $\{A,B\}\in \Lmax{\cl(R)}{R}$. 
	Hence, there is a triple $\rt{ab|c}\in \cl(R)$
	with $\lmax{\rt{ab|c}}{R} = \{A,B\} \in \LrR{\rt{ab|c}}{R}$.
	By definition, $[R,A\cup B]$ has  two connected components and at least
	one contains 2 or more vertices. First, assume for contradiction that 
	there is no
	triple $\rt{a'b'|c'} \in R$ with 
	$\{A,B\} \in \LrR{\rt{a'b'|c'}}{R}$. Since
	$|A|>1$ or $|B|>1$ and $A,B$ are connected components in $[R,A\cup B]$, all
	edges $(x,y)$ within $\langle A \rangle_{[R,A\cup B]}$ and $\langle B \rangle_{[R,A\cup B]}$ are,
	therefore, provided by triples $\rt{xy|z}\in R$ such that either $x,y,z\in A$ or
	$x,y,z\in B$. But then $[R,A]$ or $[R,B]$  is connected; contradicting
	Theorem \ref{thm:ahograph}. Thus, there must be a triple $\rt{a'b'|c'} \in R$
	with $\{A,B\} \in \LrR{\rt{a'b'|c'}}{R}$.
	We continue with showing that $\lmax{\rt{a'b'|c'}}{R} = \{A,B\}$. If
	this was not the case, then there is an $\lmax{\rt{a'b'|c'}}{R} =
	\{\Amax,\Bmax\}$ with $|\Amax \cup \Bmax| > |A\cup B|$. Since
	$\lmax{\rt{ab|c}}{R} = \{A,B\} \in \LrR{\rt{ab|c}}{R}$, one of $A$ and
	$B$ is containing $a,b$ and the other $c$. Moreover, Lemma
	\ref{lem:max1} implies that $A\subseteq \Amax$ and $B\subseteq \Bmax$
	or $B\subseteq \Amax$ and $A\subseteq \Bmax$. Taken the latter two
	arguments together, one of $\Amax$ and $\Bmax$ is containing $a,b$ and
	the other $c$. Therefore, $\{\Amax,\Bmax\} \in \LrR{\rt{ab|c}}{R}$.
	However, since $|\Amax \cup \Bmax| > |A\cup B|$, we have
	$\lmax{\rt{ab|c}}{R} \neq \{A,B\}$; a contradiction to the assumption
	$\lmax{\rt{ab|c}}{R} = \{A,B\}$. Therefore, $\lmax{\rt{a'b'|c'}}{R} =
	\{A,B\}$ and thus, $\{A,B\}\in \Lmax{R}{R}$. Hence, $\Lmax{R}{R} =
	\Lmax{\cl(R)}{R}$.

	Now we show that $\Lmax{\cl(R)}{R} = \Lmax{\cl(R)}{\cl(R)}$.
	Let $\{A,B\}  = \lmax{\rt{ab|c}}{R} \in \Lmax{\cl(R)}{R}$. 
	Since $R\subseteq \cl(R)$ and by Lemma \ref{lem:subgraph}, 
	$[R,A\cup B]$ is a subgraph of $[\cl(R),A\cup B]$.
	Since $\{A,B\}  = \lmax{\rt{ab|c}}{R}$, 
	the graph $[R,A\cup B]$ has exactly two connected components $A$ and $B$. 
	Thus, $[\cl(R),A\cup B]$ has at most two connected components. 
	Still, the induced subgraphs $\langle A  \rangle$  and $\langle B \rangle$ 
	of $[\cl(R),A\cup B]$ are connected. 
	However, since $\cl(R)$ is consistent we can apply 
	Theorem \ref{thm:ahograph} and conclude that 
	$[\cl(R),A\cup B]$ must be disconnected, and thus, has as connected components	
	$A$ and $B$. Therefore, $\{A,B\}\in \LrR{\rt{ab|c}}{\cl(R)}$. 
	Assume now for contradiction that 
	$\lmax{\rt{ab|c}}{\cl(R)} = \{\Amax,\Bmax\} \neq \{A,B\}$.
   By Lemma \ref{lem:max1}, 
	 $|\Amax \cup \Bmax| >  |A\cup B|$ and either
	$A\subseteq \Amax$ and $B\subseteq \Bmax$
	or $B\subseteq \Amax$ and $A\subseteq \Bmax$. 
	W.l.o.g.\ assume that $A\subseteq \Amax$ and $B\subseteq \Bmax$
	and $a,b\in A$, $c\in B$. 
	Hence, there is a vertex $d\in (\Amax\cup \Bmax)\setminus (A\cup B)$.
	If $d\in \Amax$, then Theorem \ref{thm:2cc} and $a\in \Amax$, $c\in \Bmax$ imply that 
	$\rt{ad|c} \in \cl(\cl(R))=\cl(R)$.
	Again, Theorem \ref{thm:2cc} implies that there is 
	a subset $\LT\subseteq L_R$ such that $[R,\LT]$
	has exactly two connected components $A'$, $B'$ with $a,d\in A'$ and $c\in B'$. 
	Thus, $\{A',B'\} \in \LrR{\rt{ad|c}}{R}$. Recap that $\{A,B\} \in \LrR{\rt{ab|c}}{R}$.
	Since $A\cap A'\neq \emptyset$ and $B\cap B'\neq \emptyset$
	we can apply Lemma \ref{lem:intersection-union} and conclude that 
	 $\{A\cup A',B\cup B'\} \in \LrR{\rt{ab|c}}{R}$. 
	However, since $d\in A'\setminus A$ it holds that 
	$|A\cup A'\cup B\cup B'| > |A\cup B|$; a contradiction to $\{A,B\}= \lmax{\rt{ab|c}}{R}$.
	By similar arguments one derives a contradiction if $d\in \Bmax$.
	Thus, $\lmax{\rt{ab|c}}{\cl(R)} = \{\Amax,\Bmax\} = \{A,B\}$
	and therefore, $\{A,B\} \in \Lmax{\cl(R)}{\cl(R)}$. Hence, $\Lmax{\cl(R)}{R} \subseteq \Lmax{\cl(R)}{\cl(R)}$.

	Finally, let  $\{A,B\}  = \lmax{\rt{ab|c}}{\cl(R)} \in \Lmax{\cl(R)}{\cl(R)}$.
	By Lemma \ref{lem:nonempty} and since $\rt{ab|c}\in \cl(R)$, we have
	$\LrR{\rt{ab|c}}{R}\neq \emptyset$. Thus, there is a maximal element
	$\lmax{\rt{ab|c}}{R}  = \{\Amax,\Bmax\} \in \Lmax{\cl(R)}{R} \subseteq \Lmax{\cl(R)}{\cl(R)}$. 
	Assume that $\{A,B\}$ and $\{\Amax,\Bmax\}$ are distinct. 
	Since both $\{A,B\}$ and $\{\Amax,\Bmax\}$ are contained in $\Lmax{\cl(R)}{\cl(R)}$
	and $A\cap (\Amax\cup\Bmax)\neq \emptyset$,  $B\cap (\Amax\cup\Bmax)\neq \emptyset$
	we can apply Lemma \ref{lem:intersection-subset} and conclude that
	$A\cup B\subseteq \Amax$ or $A\cup B \subseteq \Bmax$. 
	If $A\cup B\subseteq \Amax$, then $a,b,c\in \Amax$; a contradiction, 
	since one of $\Amax$ and $\Bmax$ contains $a,b$ and the other $c$. 
	Analogously, $A\cup B \subseteq \Bmax$ cannot occur. 
		Hence, $\{A,B\}$ and $\{\Amax,\Bmax\}$ must be equal and therefore, 
   $\lmax{\rt{ab|c}}{R}  = \lmax{\rt{ab|c}}{\cl(R)} = \{A,B\} \in \Lmax{\cl(R)}{R}$.
	Thus, $\Lmax{\cl(R)}{R} = \Lmax{\cl(R)}{\cl(R)}$.			
\end{proof}

\begin{theorem}
	Let $R$ be a consistent triple set. 
   If $R'\in \SC(R)$, then $\Lmax{R}{R} = \Lmax{R'}{R'}$.
	In particular, for every $\rt{ab|c}\in cl(R)$ and  $R'\in \SC(R)$, 
   it holds that $\lmax{\rt{ab|c}}{R'} =  \lmax{\rt{ab|c}}{R}$.
	\label{thm:repres}
\end{theorem}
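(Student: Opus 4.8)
The plan is to deduce the whole statement from Theorem~\ref{thm:equals}, which already expresses $\Lmax{R}{R}$ purely in terms of the closure. First I would record the trivial but necessary observations: if $R'\in\SC(R)$ then $R'\subseteq R$ with $\cl(R')=\cl(R)$, and since a subset of a consistent set is consistent (any tree displaying $R$ displays $R'$), Theorem~\ref{thm:equals} applies verbatim to $R'$. Applying that theorem once to $R$ gives $\Lmax{R}{R}=\Lmax{\cl(R)}{\cl(R)}$, and once to $R'$ gives $\Lmax{R'}{R'}=\Lmax{\cl(R')}{\cl(R')}$; since $\cl(R')=\cl(R)$ the two right-hand sides coincide, yielding $\Lmax{R}{R}=\Lmax{R'}{R'}$, the first assertion.

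For the ``in particular'' part, fix $\rt{ab|c}\in\cl(R)=\cl(R')$; by Lemma~\ref{lem:nonempty} both $\lmax{\rt{ab|c}}{R}$ and $\lmax{\rt{ab|c}}{R'}$ are defined. Put $\{A,B\}:=\lmax{\rt{ab|c}}{R}$ and $\{A',B'\}:=\lmax{\rt{ab|c}}{R'}$. I would first note that both pairs lie in the \emph{same} set $\Lmax{\cl(R)}{\cl(R)}$: indeed $\{A,B\}\in\Lmax{\cl(R)}{R}=\Lmax{\cl(R)}{\cl(R)}$ by Theorem~\ref{thm:equals}, and likewise $\{A',B'\}\in\Lmax{\cl(R')}{R'}=\Lmax{\cl(R')}{\cl(R')}=\Lmax{\cl(R)}{\cl(R)}$ using $\cl(R')=\cl(R)$. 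Now suppose for contradiction $\{A,B\}\neq\{A',B'\}$. By definition of $\LrR{\rt{ab|c}}{\cdot}$, each of the two pairs has one component containing $\{a,b\}$ and the other containing $c$; hence the two $\{a,b\}$-components meet (both contain $a$) and the two $c$-components meet (both contain $c$), so, writing $A'$ for the component of $\{A',B'\}$ that contains $\{a,b\}$, we get $A'\cap(A\cup B)\neq\emptyset$ and $B'\cap(A\cup B)\neq\emptyset$. Applying Lemma~\ref{lem:intersection-subset} to the consistent triple set $\cl(R)$ forces $A'\cup B'\subseteq A$ or $A'\cup B'\subseteq B$; in either case one of $A,B$ would contain all of $a,b,c$, contradicting that $A\cap B=\emptyset$ with $\{a,b\}$ separated from $c$. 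Hence $\{A,B\}=\{A',B'\}$, i.e. $\lmax{\rt{ab|c}}{R}=\lmax{\rt{ab|c}}{R'}$.

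There is essentially no substantial obstacle here, since the heavy lifting was already done in Theorem~\ref{thm:equals} and Lemma~\ref{lem:intersection-subset}; in fact the proof of Theorem~\ref{thm:equals} already shows $\lmax{\rt{ab|c}}{R}=\lmax{\rt{ab|c}}{\cl(R)}$ for every $\rt{ab|c}\in\cl(R)$, so the per-triple identity could alternatively be obtained by transitivity through $\cl(R)=\cl(R')$, and I would use whichever phrasing is cleaner in the final write-up. The only point needing care is bookkeeping: $\Lmax{\cdot}{\cdot}$ is a set of unordered pairs of connected components rather than a family indexed by triples, so before invoking the uniqueness and intersection lemmas one must re-anchor each pair $\{A,B\}$ to the triple $\rt{ab|c}$ via its defining property ``one side contains $a,b$, the other contains $c$'' — which is exactly what makes the intersection hypotheses of Lemma~\ref{lem:intersection-subset} hold.
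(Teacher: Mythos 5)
Your proposal is correct and follows essentially the same route as the paper: the first assertion via two applications of Theorem~\ref{thm:equals} together with $\cl(R')=\cl(R)$, and the per-triple identity by placing both maximal pairs in a common set of the form $\Lmax{S}{S}$ for a consistent $S$ and invoking Lemma~\ref{lem:intersection-subset} to force a containment that contradicts the separation of $\{a,b\}$ from $c$. The only cosmetic difference is that you anchor both pairs in $\Lmax{\cl(R)}{\cl(R)}$ and spell the contradiction out, whereas the paper anchors them in $\Lmax{R'}{R'}$ and refers back to the closing argument of Theorem~\ref{thm:equals}; these are the same argument since, by the first part, these sets coincide.
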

\begin{proof}
	Let $R'\in \SC(R)$ and thus, $\cl(R') = \cl(R)$. Therefore,
	\[\Lmax{R}{R} \stackrel{\textrm{Thm.\ \ref
	{thm:equals}}}{=}\Lmax{\cl(R)}{\cl(R)}
	=\Lmax{\cl(R')}{\cl(R')}\stackrel{\textrm{Thm.\ \ref {thm:equals}}}{=}
	\Lmax{R'}{R'}.\,\] Now let $\lmax{\rt{ab|c}}{R} = \{\Amax,\Bmax\} \in
	\Lmax{\cl(R)}{R}$. Note, Theorem \ref{thm:equals} implies that
	$\Lmax{\cl(R)}{R}=\Lmax{R}{R}$ and therefore, $\{\Amax,\Bmax\} \in
	\Lmax{R}{R} = \Lmax{R'}{R'}$. Since $\rt{ab|c}\in \cl(R) = \cl(R')$ we
	can apply Lemma \ref{lem:nonempty} and conclude that
	$\LrR{\rt{ab|c}}{R'}\neq \emptyset$. Let $\lmax{\rt{ab|c}}{R'} =
	\{A,B\} \in \Lmax{\cl(R')}{R'} \stackrel{\textrm{Thm.\ \ref
	{thm:equals}}}{=} \Lmax{R'}{R'}$. Thus, both $\{\Amax,\Bmax\}$ and
	$\{A,B\} $ are contained in $\Lmax{R'}{R'}$ and $A\cap
	(\Amax\cup\Bmax)\neq \emptyset$, $B\cap (\Amax\cup\Bmax)\neq
	\emptyset$. Now we can argue analogously as in the last part of the
	proof of Theorem \ref{lem:nonempty} to conclude that $\{A,B\} =
	\{\Amax,\Bmax\}$ which implies that $\lmax{\rt{ab|c}}{R'} =
	\lmax{\rt{ab|c}}{R}$. 
\end{proof}

\section{The Matroid Structure of Minimal and Minimum Representative Triple Sets}
\label{sec:min}

By definition, $R'\in \minimal(\SC(R))$  if and only if there is no subset $R''\subsetneq R'$ with $\cl(R'') =\cl(R)$. 
Furthermore, since any minimum representative triple set is, in particular, minimal, we have
 $\minimum(\SC(R)) \subseteq \minimal(\SC(R))$. 
The computation of a minimal representative set $R'$ of $R$ can be done 
in combination with the $\mc{O}(|R||L_R|^4)$ method to compute the closure \cite{BS:95}
in polynomial time
as follows: Set $R'=R$ and as long as there is a triple $r\in \cl(R'\setminus r)$ remove
				$r$ from $R'$. By Lemma \ref{lem:identCl}, removal of $r$ from $R'$
				still preserves $\cl(R) = \cl(R')$.
However, the computational complexity of finding a minimum 
representative set $R'$ of $R$ is still an open problem. 
We show that one can determine  minimum representative sets in polynomial time.
To this end, we give the following

\begin{definition}
	A \emph{matroid} is an ordered pair $(E,\mathbb{F}_E)$ consisting of a finite set $E$ and a 
	collection $\mathbb{F}_E$ of subsets of $E$ having the following three properties:
	\begin{itemize}
	\item[(I1)] $\emptyset\in \mathbb{F}_E$;
	\item[(I2)]	If $I\in \mathbb{F}_E$ and $I'\subseteq I$, then $I'\in \mathbb{F}_E$;
	\item[(I3)]	If $I_1,I_2\in \mathbb{F}_E$ and $|I_1|<|I_2|$, then there is an element 
							$x\in I_2\setminus I_1$ such that $I_1\cup \{x\}\in \mathbb{F}_E$.
	\end{itemize}
\end{definition}

The elements in $\mathbb{F}_E$ are called \emph{independent} in $(E,\mathbb{F}_E)$. 
Maximal independent elements of a matroid are called a basis of $(E,\mathbb{F}_E)$. 
Every matroid  $(E,\mathbb{F}_E)$ is determined by its collection of its bases. 
We refer the reader to \cite{Oxley:11,korte2012combinatorial} for more detailed background on matroid theory. 

In what follows, we show that $\minimal(\SC(R))$ forms the collection of bases of a matroid.
In this case, $\minimum(\SC(R))  = \minimal(\SC(R))$ since 
all basis elements of a matroid have the same cardinality \cite{Oxley:11,korte2012combinatorial}.
A useful characterization  is given by the next result. 
\begin{lemma}[{\cite[Cor.\ 1.2.5]{Oxley:11}}]	
	Let $\mc{B}$ be  a collection of subsets of $E$. 
	Then $\mc{B}$ is the collection of bases of a matroid $(E,\mathbb{F}_E)$ if and only
	if it has the following properties:
	\begin{itemize}
	\item[(B1)] $\mc{B}\neq \emptyset$;
	\item[(B2)] If $B_1,B_2\in \mc{B}$ and $x\in B_1\setminus B_2$, then
					there is an element $y\in B_2\setminus B_1$ such 
					$ (B_1\setminus \{x\}) \cup \{y\} \in \mc{B}$.
	\end{itemize}
	\label{lem:basis}
\end{lemma}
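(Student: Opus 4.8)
This is the classical basis-exchange characterisation of matroids, so the plan is simply to give the standard two-part argument; nothing from the earlier sections is needed.

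\textbf{Forward direction.} Assume $\mc{B}$ is the set of bases of a matroid $(E,\mathbb{F}_E)$. (B1) is immediate: $\emptyset\in\mathbb{F}_E$ by (I1) and $E$ is finite, so a maximal independent set exists. For (B2) I would first record that all members of $\mc{B}$ share a common size $r$: if $|B|<|B'|$ then (I3) adds some element of $B'\setminus B$ to $B$ within $\mathbb{F}_E$, contradicting maximality of $B$; consequently every member of $\mathbb{F}_E$ of size $r$ is itself a basis. Now, given $B_1,B_2\in\mc{B}$ and $x\in B_1\setminus B_2$, the set $B_1\setminus\{x\}$ lies in $\mathbb{F}_E$ (by (I2)) and has size $r-1<|B_2|$, so (I3) gives $y\in B_2\setminus(B_1\setminus\{x\})$ with $(B_1\setminus\{x\})\cup\{y\}\in\mathbb{F}_E$; this set has size $r$, hence lies in $\mc{B}$, and $y\ne x$ since $x\notin B_2$, so $y\in B_2\setminus B_1$ and $(B_1\setminus\{x\})\cup\{y\}$ is the required basis.

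\textbf{Reverse direction.} Assume (B1) and (B2) and put $\mathbb{F}_E:=\{X\subseteq E: X\subseteq B\text{ for some }B\in\mc{B}\}$. Axioms (I1) and (I2) are immediate. I would then prove that all members of $\mc{B}$ are equicardinal by a minimal-counterexample argument: among pairs $B_1,B_2\in\mc{B}$ with $|B_1|>|B_2|$ choose one with $|B_1\setminus B_2|$ minimal, apply (B2) to some $x\in B_1\setminus B_2$ to obtain $B_3=(B_1\setminus\{x\})\cup\{y\}\in\mc{B}$ with $y\in B_2$, and observe $|B_3|=|B_1|>|B_2|$ while $|B_3\setminus B_2|<|B_1\setminus B_2|$, a contradiction. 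For (I3), given $I_1,I_2\in\mathbb{F}_E$ with $|I_1|<|I_2|$, I would pick $B_1,B_2\in\mc{B}$ with $I_1\subseteq B_1$, $I_2\subseteq B_2$ and $|B_1\cap B_2|$ as large as possible, and argue that $(I_2\setminus I_1)\cap B_1\ne\emptyset$; this suffices, because then $I_1\cup\{x\}\subseteq B_1$ for any $x$ in this set, so $I_1\cup\{x\}\in\mathbb{F}_E$ with $x\in I_2\setminus I_1$. Suppose not, so $I_2\cap B_1\subseteq I_1$; then there is $z\in I_2\setminus B_1\subseteq B_2\setminus B_1$ (hence $z\notin I_1$) and $B_1\setminus B_2\ne\emptyset$. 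If some $x\in(B_1\setminus B_2)\setminus I_1$ exists, (B2) applied to $B_1,B_2,x$ replaces $B_1$ by a basis still containing $I_1$ but meeting $B_2$ in one more element, contradicting maximality; otherwise $B_1\setminus B_2\subseteq I_1$, and analogously either some $x\in(B_2\setminus B_1)\setminus I_2$ lets us enlarge $|B_1\cap B_2|$ by modifying $B_2$, or $B_2\setminus B_1\subseteq I_2$. In this last sub-case one has $B_1\setminus B_2=I_1\setminus B_2$, $B_2\setminus B_1=I_2\setminus B_1$ and $I_2\cap B_1=I_1\cap I_2$; combining these with $|B_1|=|B_2|$ and $I_1\cap I_2\subseteq I_1\cap B_2$ gives $|I_2\setminus I_1|=|I_1\setminus B_2|\le|I_1\setminus I_2|$, contradicting $|I_1|<|I_2|$. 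Finally I would confirm that the bases of $(E,\mathbb{F}_E)$ are exactly the members of $\mc{B}$: each $B\in\mc{B}$ is maximal in $\mathbb{F}_E$ by equicardinality, and each maximal member of $\mathbb{F}_E$ coincides with the member of $\mc{B}$ that contains it.

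\textbf{Main obstacle.} The only delicate point is axiom (I3) in the reverse direction, specifically choosing the right extremal pair (namely maximising $|B_1\cap B_2|$) and disposing of the ``trapped'' configuration $B_1\setminus B_2\subseteq I_1$ and $B_2\setminus B_1\subseteq I_2$, where no exchange step applies and the contradiction must instead be squeezed out of a cardinality count. The forward direction, the equicardinality lemma, and the identification of the bases are all routine.
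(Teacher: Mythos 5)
Your proof is correct, and since the paper does not prove this lemma at all (it is quoted directly from Oxley, Cor.~1.2.5), there is nothing to diverge from: your argument is precisely the standard textbook proof that the citation points to. I checked the details — the forward direction via equicardinality of bases, the reverse direction's equicardinality lemma by minimizing $|B_1\setminus B_2|$, and the verification of (I3) by choosing $B_1\supseteq I_1$, $B_2\supseteq I_2$ with $|B_1\cap B_2|$ maximal and resolving the residual case $B_1\setminus B_2\subseteq I_1$, $B_2\setminus B_1\subseteq I_2$ by the counting argument $|I_2\setminus I_1|=|I_1\setminus B_2|\leq |I_1\setminus I_2|$ — and they all hold.
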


\begin{definition}
In what follows, $(R,\mathbb{F}_R)$ denotes the ordered pair where
\begin{enumerate}	
	\item $R$ is a consistent triple set and 
	\item $\mathbb{F}_R =\{R''\subseteq R'\colon R'\in \minimal(\SC(R))\}$ is the collection of all
			subsets of the minimal representative sets of $R$. 
\end{enumerate} 
\end{definition}

It is easy to see that  $(R,\mathbb{F}_R)$ is an independent system, that is, it satisfies Conditions (I1) and (I2).
Moreover, the collection of bases of $(R,\mathbb{F}_R)$ is the set $\minimal(\SC(R))$. 
We will utilize Lemma \ref{lem:basis} to show $\mc{B} = \minimal(\SC(R))$ satisfies (B1) and (B2). 
To this end, we 
give the notion of ``bridges'' in the Ahograph, that is, triples $\rt{ab|c}$ 
for which the Ahograph $[R\setminus \{\rt{ab|c}\},\LT]$  has more connected components than $[R,\LT]$ . 
As it turns out, elements  $R'\in \minimal(\SC(R))$  are characterized by 
the bridge-property of triples $\rt{ab|c}\in R'$. We first give the following result.

\begin{lemma}
	Let $R$ be a consistent triple set and $R'\in \SC(R)$. 
	Then $R'\in \minimal(\SC(R))$ if and only if 
  	$\cl(R')\neq \cl(R'\setminus \{r\})$ for all $r\in R'$.
	\label{lem:sc-check}
\end{lemma}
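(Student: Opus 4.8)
The plan is to prove both implications directly from the definition of $\minimal(\SC(R))$ and the three closure axioms (in particular monotonicity: $R'\subseteq R$ implies $\cl(R')\subseteq\cl(R)$); no Ahograph machinery is needed here, and one could alternatively invoke Lemma \ref{lem:identCl}, but the bare argument is shorter.

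First I would treat the ``only if'' direction by contraposition. Suppose there is a triple $r\in R'$ with $\cl(R'\setminus\{r\}) = \cl(R')$. Since $R'\in\SC(R)$ we have $\cl(R') = \cl(R)$, hence $\cl(R'\setminus\{r\}) = \cl(R)$, so $R'\setminus\{r\}\in\SC(R)$. As $R'\setminus\{r\}\subsetneq R'$, this shows $R'$ is not minimal w.r.t.\ inclusion in $\SC(R)$, i.e.\ $R'\notin\minimal(\SC(R))$. Contraposition then gives the claimed implication.

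For the ``if'' direction I would again argue by contraposition: assume $R'\notin\minimal(\SC(R))$. Then, by definition, there is a proper subset $R''\subsetneq R'$ with $R''\in\SC(R)$, that is $\cl(R'') = \cl(R) = \cl(R')$. Pick any $r\in R'\setminus R''$, which is nonempty because $R''\subsetneq R'$. Then $R''\subseteq R'\setminus\{r\}\subseteq R'$, and monotonicity of $\cl$ yields $\cl(R') = \cl(R'')\subseteq\cl(R'\setminus\{r\})\subseteq\cl(R')$, so $\cl(R'\setminus\{r\}) = \cl(R')$; hence the stated condition fails for this $r$.

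The argument is entirely elementary, so I do not expect a genuine obstacle. The only points to watch are that the equivalence is stated for $R'$ already known to lie in $\SC(R)$, so $\cl(R') = \cl(R)$ may be used freely throughout, and that in the ``if'' direction it suffices to delete a single triple at a time — this is exactly what choosing $r\in R'\setminus R''$ and sandwiching $\cl(R'\setminus\{r\})$ between $\cl(R'')$ and $\cl(R')$ accomplishes.
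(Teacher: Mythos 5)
Your proof is correct and follows essentially the same route as the paper: both directions are handled by contraposition, using $\cl(R')=\cl(R)$ to pass between $\SC(R)$-membership of $R'\setminus\{r\}$ and non-minimality, and in the converse direction picking $r\in R'\setminus R''$ and sandwiching $\cl(R'\setminus\{r\})$ between $\cl(R'')$ and $\cl(R')$ via monotonicity. No gaps.
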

\begin{proof}
	Let $R$ be a consistent triple set and $R'\in \SC(R)$ and thus, 
	$\cl(R') = \cl(R)$.
	Clearly, if $\cl(R) = cl(R') = \cl(R'\setminus \{r\})$ for any $r\in R'$, 
	then $R'\notin \minimal(\SC(R))$. Conversely, if 
	$R'\notin \minimal(\SC(R))$, then 
	there is a subset $R''\subsetneq R'$
	with $\cl(R'') = \cl(R)$. Since $R'\in \SC(R)$, it also holds that $\cl(R') = \cl(R) = \cl(R'')$.
	Let $r\in R'\setminus R''$. 
	Since $R''\subseteq R'\setminus \{r\} \subsetneq R'$, we have 
	$\cl(R'')\subseteq \cl(R'\setminus \{r\}) \subseteq \cl(R') = \cl(R'')$
	and therefore, $\cl(R'\setminus \{r\}) = \cl(R')$. 
\end{proof}

\begin{definition}
	Let $R$ be a consistent triple set, $\rt{ab|c}\in R$  
   and $\LT\subseteq L_R$ such that $a,b,c \in \LT$. 
	The triple $\rt{ab|c}$ is called \emph{bridge} in $[R,\LT]$
	if $a,b$ are in different connected components of $[R\setminus \{\rt{ab|c}\},\LT]$.
  \end{definition}
\begin{lemma}
		Let $R$ be a consistent triple set and $R'\in \SC(R)$. 
		Then, $R'\in \minimal(\SC(R))$ if and only if every
		$\rt{ab|c} \in R'$ is a bridge in $[R',A\cup B]$ with
		$\{A,B\} = \lmax{\rt{ab|c}}{R'}$.
		In particular,  $[R'\setminus\{\rt{ab|c}\},A\cup B]$
		must have three connected components $\alpha,\beta,\gamma$
		with $a\in \alpha$, $b\in \beta$ and $c\in \gamma$, 
		that is, either $A=\alpha \cup \beta$ and $B=\gamma$ or
		$B=\alpha \cup \beta$ and $A=\gamma$.
	\label{lem:bridge}
\end{lemma}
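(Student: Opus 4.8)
The plan is to prove both implications by exploiting Lemma \ref{lem:sc-check} together with the characterization of the closure via connected components (Theorem \ref{thm:2cc}) and the structure theory developed in Section \ref{sec:repT}, in particular Theorem \ref{thm:repres} which guarantees that $\lmax{\rt{ab|c}}{R'}$ is well-defined and equals $\lmax{\rt{ab|c}}{R}$ for every $\rt{ab|c}\in\cl(R)$. Throughout, write $\{A,B\}=\lmax{\rt{ab|c}}{R'}$ and w.l.o.g.\ assume $a,b\in A$ and $c\in B$; by definition $[R',A\cup B]$ has exactly two connected components, namely $A$ and $B$, with the edge $(a,b)$ present (because $\rt{ab|c}\in R'$ with $c\in A\cup B$).

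For the \emph{if}-direction, suppose every $\rt{ab|c}\in R'$ is a bridge in $[R',A\cup B]$, where $\{A,B\}=\lmax{\rt{ab|c}}{R'}$. By Lemma \ref{lem:sc-check} it suffices to show $\cl(R'\setminus\{\rt{ab|c}\})\neq\cl(R')$ for every $\rt{ab|c}\in R'$; I will show $\rt{ab|c}\notin\cl(R'\setminus\{\rt{ab|c}\})$. Set $R''=R'\setminus\{\rt{ab|c}\}$. Since $\rt{ab|c}$ is a bridge, $a$ and $b$ lie in distinct connected components of $[R'',A\cup B]$; call them $\alpha\ni a$ and $\beta\ni b$. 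Removing a single edge from a graph increases the number of connected components by at most one, so $[R'',A\cup B]$ has exactly three connected components, which must be $\alpha$, $\beta$, and $B$ (the latter unchanged since $R''$ and $R'$ agree on all triples with leaves inside $B$, as $c\in B\subsetneq A\cup B$ forces the removed triple to be ``anchored'' at $A$); in particular $A=\alpha\cup\beta$, $B=\gamma\coloneqq B$, proving the ``in particular'' clause. Now I claim $\rt{ab|c}\notin\cl(R'')$. If it were, Theorem \ref{thm:2cc} would yield a set $\LT\subseteq L_{R''}$ such that $[R'',\LT]$ has exactly two components, one containing $a,b$ and the other containing $c$; hence $\{A^*,B^*\}\in\LrR{\rt{ab|c}}{R''}$ for some such pair with $a,b\in A^*$, $c\in B^*$. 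By Lemma \ref{lem:subgraph}, $[R'',\LT]$ is a subgraph of $[R',\LT]$, so in $[R',\LT]$ the induced subgraph $\langle A^*\rangle$ is connected; combined with $\{A,B\}\in\LrR{\rt{ab|c}}{R'}$ and $A^*\cap A\neq\emptyset$, $B^*\cap B\neq\emptyset$, Lemma \ref{lem:intersection-union} gives $\{A\cup A^*,B\cup B^*\}\in\LrR{\rt{ab|c}}{R'}$, so by maximality of $\lmax{\rt{ab|c}}{R'}$ we get $A^*\subseteq A$ and $B^*\subseteq B$. But in $[R',A^*\cup B^*]$, which is a subgraph of $[R',A\cup B]$, the vertices $a,b$ are connected \emph{without} using the triple $\rt{ab|c}$ (since $A^*,B^*\subseteq L_{R''}$ and all edges of $[R'',A^*\cup B^*]=[R',A^*\cup B^*]$ come from $R''$), contradicting that $\rt{ab|c}$ is a bridge in $[R',A\cup B]$. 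Hence $\rt{ab|c}\notin\cl(R'')$, so $\cl(R'')\subsetneq\cl(R')$, and $R'\in\minimal(\SC(R))$.

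For the \emph{only if}-direction, assume $R'\in\minimal(\SC(R))$ and fix $\rt{ab|c}\in R'$ with $\{A,B\}=\lmax{\rt{ab|c}}{R'}$, $a,b\in A$, $c\in B$. By Lemma \ref{lem:sc-check}, $\cl(R'\setminus\{\rt{ab|c}\})\neq\cl(R')$, and since $\rt{ab|c}\in\cl(R')\setminus\cl(R'\setminus\{\rt{ab|c}\})$ this gives $\rt{ab|c}\notin\cl(R'\setminus\{\rt{ab|c}\})$ (the reverse inclusion $\cl(R'\setminus\{\rt{ab|c}\})\subseteq\cl(R')$ being automatic, and any other new closure triple would itself witness $\rt{ab|c}$ via the closure properties — more directly: if $\cl(R'\setminus\{\rt{ab|c}\})$ contained $\rt{ab|c}$ then by Lemma \ref{lem:identCl} it would equal $\cl(R')$, contradiction). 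Suppose for contradiction $\rt{ab|c}$ is \emph{not} a bridge in $[R',A\cup B]$, i.e.\ $a$ and $b$ are still connected in $[R'\setminus\{\rt{ab|c}\},A\cup B]$. Set $R''=R'\setminus\{\rt{ab|c}\}$. Then $[R'',A\cup B]$ still has $\langle A\rangle$ connected and $\langle B\rangle$ connected (the latter untouched), and since $R''$ is consistent Theorem \ref{thm:ahograph} forces $[R'',A\cup B]$ to be disconnected, hence its components are exactly $A$ and $B$. By Theorem \ref{thm:2cc} this yields $\rt{ab|c}\in\cl(R'')$, contradicting $\rt{ab|c}\notin\cl(R'')$. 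Therefore $\rt{ab|c}$ is a bridge in $[R',A\cup B]$, and the ``in particular'' clause about the three components $\alpha,\beta,\gamma$ follows exactly as in the first direction (removal of one edge splits $A$ into $\alpha\ni a$ and $\beta\ni b$ and leaves $B=\gamma$ intact).

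The main obstacle I anticipate is the bookkeeping in the \emph{if}-direction, specifically the step pushing a hypothetical small witness $\{A^*,B^*\}\in\LrR{\rt{ab|c}}{R''}$ up to one inside $\{A,B\}$ and then deriving a contradiction with bridgeness: one must be careful that the edges of $[R'',A^*\cup B^*]$ genuinely avoid the removed triple and that Lemma \ref{lem:intersection-union} applies (checking $A^*\cap A\neq\emptyset$ and $B^*\cap B\neq\emptyset$, which holds because $a,b\in A^*\cap A$ and $c\in B^*\cap B$). The ``exactly three components'' claim also deserves care — it rests on the elementary fact that deleting one edge changes the component count by at most one, plus the observation that the component $B$ is unaffected because the deleted triple $\rt{ab|c}$ contributes only the edge $(a,b)\subseteq A$ inside $[R',A\cup B]$.
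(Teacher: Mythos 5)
Your proof is correct, and one half of it matches the paper while the other half takes a different route. The direction ``minimal $\Rightarrow$ every triple is a bridge'' and the ``in particular'' statement about the three components $\alpha,\beta,\gamma$ are argued exactly as in the paper: passing from $R'$ to $R'\setminus\{\rt{ab|c}\}$ removes at most the single edge $(a,b)$, so if $a,b$ stay connected the components $A,B$ persist, Theorem \ref{thm:2cc} puts $\rt{ab|c}$ into $\cl(R'\setminus\{\rt{ab|c}\})$, and Lemma \ref{lem:identCl} contradicts minimality. Where you genuinely diverge is the converse: the paper proves ``not minimal $\Rightarrow$ some triple is not a bridge'' in two lines by invoking Theorem \ref{thm:repres}, which gives $\lmax{\rt{ab|c}}{R'\setminus\{\rt{ab|c}\}}=\lmax{\rt{ab|c}}{R'}=\{A,B\}$ outright; you instead prove ``all bridges $\Rightarrow$ minimal'' directly, taking a hypothetical witness $\{A^*,B^*\}\in\LrR{\rt{ab|c}}{R'\setminus\{\rt{ab|c}\}}$ from Theorem \ref{thm:2cc}, lifting it to $\LrR{\rt{ab|c}}{R'}$, pushing it inside $\{A,B\}$ via Lemmas \ref{lem:intersection-union} and \ref{lem:max1}, and contradicting bridgeness; this costs more bookkeeping but makes that half independent of Theorem \ref{thm:repres}. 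Two small points to tidy: (i) before applying Lemma \ref{lem:intersection-union} you should state explicitly that $\{A^*,B^*\}\in\LrR{\rt{ab|c}}{R'}$, which holds because the only possible additional edge $(a,b)$ has both endpoints in $A^*$ and hence neither merges $A^*$ with $B^*$ nor alters connectedness; (ii) the parenthetical claim $[R'\setminus\{\rt{ab|c}\},A^*\cup B^*]=[R',A^*\cup B^*]$ is false in general (since $a,b,c\in A^*\cup B^*$, the triple $\rt{ab|c}$ does contribute the edge $(a,b)$ to the latter graph), but it is also unnecessary: $a$ and $b$ lie in the common component $A^*$ of $[R'\setminus\{\rt{ab|c}\},A^*\cup B^*]$, which together with Lemma \ref{lem:subgraph} and $A^*\cup B^*\subseteq A\cup B$ already yields the contradiction with bridgeness in $[R',A\cup B]$.
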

\begin{proof}
	Let $R'\in \minimal(\SC(R))$, $\rt{ab|c} \in R'$ and $\lmax{\rt{ab|c}}{R'} = \{A,B\}$. 
	By definition, $[R',A\cup B]$ has exactly two connected components, one containing $a,b$ and
	the other $c$.
	Assume for contradiction that $\rt{ab|c}$ is not a bridge in 
	$[R',A\cup B]$. Thus, $a$ and $b$ are still connected by a walk in 
	$[R'\setminus \{\rt{ab|c}\}, A\cup B]$.
	Note, by Lemma \ref{lem:subgraph} the Ahograph $[R'\setminus \{\rt{ab|c}\}, A\cup B]$ is 
	a subgraph of $[R',A\cup B]$ that differs from $[R',A\cup B]$ only
	by the edge $(a,b)$. Therefore, 
	 $[R'\setminus \{\rt{ab|c}\}, A\cup B]$ still consists of the two connected components 
	$A$ and $B$, one containing $a,b$ and
	the other $c$. Theorem \ref{thm:2cc} implies that 
	$\{\rt{ab|c}\}  \in \cl(R'\setminus \{\rt{ab|c}\})$. 
   Lemma \ref{lem:identCl} implies that $\cl(R'\setminus \{\rt{ab|c}\}) = \cl(R') = \cl(R)$; 
	a contradiction to 
	$R'\in \minimal(\SC(R))$.	

	Conversely, assume that $R'\not\in \minimal(\SC(R))$. Thus, there is some triple $\rt{ab|c}\in R'$	
	such that $\cl(R'\setminus\{\rt{ab|c}\}) = \cl(R)$. Since $R'\in \SC(R)$, we can apply
	Theorem \ref{thm:repres} and conclude that 
	$\lmax{\rt{ab|c}}{R'\setminus\{\rt{ab|c}\}} = \lmax{\rt{ab|c}}{R'}  = \{A,B\}$.
	Thus, $[R'\setminus\{\rt{ab|c}\}, A\cup B]$ has two connected components $A$ and $B$. Therefore, 
	$\rt{ab|c}$ is not a bridge in $[R', A\cup B]$. 

   For the last statement, observe that 
	$R'\in \minimal(\SC(R))$ 	and	$\{A,B\} = \lmax{\rt{ab|c}}{R'}$ implies that
	the graph $[R',A\cup B]$ has exactly two connected components, one containing
	$a,b$ (say $A$) and the other ($B$) contains $c$. 
	Since $\rt{ab|c} \in R'$ is a bridge in $[R',A\cup B]$, 
	$a$ and $b$ are in distinct connected components $\alpha$ and $\beta$ of $[R'\setminus \{\rt{ab|c}\}, A\cup B]$, respectively.
	However, since only the edge $(a,b)$ has been removed from $[R',A\cup B]$
	to obtain $[R'\setminus \{\rt{ab|c}\}, A\cup B]$ it is clear that 
	the set $A$ decomposes into these connected components $\alpha,\beta$, i.e., $A=\alpha\cup \beta$.
	Besides the edge $(a,b)$  no other edge  has been removed or added to 	$[R'\setminus \{\rt{ab|c}\}, A\cup B]$
	and thus, 
	$B = \gamma$ is still a connected component in $[R'\setminus \{\rt{ab|c}\}, A\cup B]$
	with $c\in \gamma$.
\end{proof}

We are now in the position to show that is a matroid.

\begin{theorem}
	If $R$ is a consistent triple set, then $(R,\mathbb{F}_R)$ is a matroid.
	\label{thm:matroid}
\end{theorem}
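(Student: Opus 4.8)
The plan is to verify the two Oxley-style basis conditions (B1) and (B2) for $\mc{B} = \minimal(\SC(R))$. Condition (B1) is immediate: $R\in\SC(R)$, and since $\SC(R)$ is finite and nonempty we may pick any inclusion-minimal element, so $\minimal(\SC(R))\neq\emptyset$. The entire work lies in (B2): given $B_1,B_2\in\minimal(\SC(R))$ and a triple $r=\rt{ab|c}\in B_1\setminus B_2$, we must produce a triple $s\in B_2\setminus B_1$ with $(B_1\setminus\{r\})\cup\{s\}\in\minimal(\SC(R))$.

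The key structural fact I would exploit is Theorem \ref{thm:repres}: since $B_1,B_2\in\SC(R)$, we have $\Lmax{B_1}{B_1}=\Lmax{B_2}{B_2}=\Lmax{R}{R}$, and for every $\rt{xy|z}\in\cl(R)$ the maximal element $\lmax{\rt{xy|z}}{B_1}=\lmax{\rt{xy|z}}{B_2}$ agrees. So fix $r=\rt{ab|c}\in B_1\setminus B_2$ and let $\{A,B\}=\lmax{r}{B_1}=\lmax{r}{B_2}$, say $a,b\in A$, $c\in B$. By Lemma \ref{lem:bridge}, $r$ is a bridge in $[B_1,A\cup B]$, so in $[B_1\setminus\{r\},A\cup B]$ the component $A$ splits into $\alpha\ni a$ and $\beta\ni b$ while $B=\gamma$ survives; consequently $\cl(B_1\setminus\{r\})\subsetneq\cl(B_1)=\cl(R)$. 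Now I would argue that some triple of $B_2$ must ``reconnect'' $\alpha$ and $\beta$ inside $A\cup B$. Since $\cl(B_2)=\cl(R)\ni r$, Theorem \ref{thm:2cc} applied inside $B_2$ (using that $\{A,B\}=\lmax{r}{B_2}\in\LrR{r}{B_2}$) tells us $[B_2,A\cup B]$ has exactly the two components $A,B$; in particular $\langle A\rangle_{[B_2,A\cup B]}$ is connected, so there is an edge of this graph joining a vertex of $\alpha$ to a vertex of $\beta$ — equivalently a triple $s=\rt{xy|z}\in B_2$ with $x\in\alpha$, $y\in\beta$, $z\in A\cup B$. This $s$ cannot be $r$ (it is not even in $B_1$ a priori, but more to the point $r\notin B_2$), and $s\notin B_1$ must be checked: if $s\in B_1$ then $s\in B_1\setminus\{r\}$ would already join $\alpha$ and $\beta$ in $[B_1\setminus\{r\},A\cup B]$, contradicting that $\alpha,\beta$ are distinct components there; hence $s\in B_2\setminus B_1$.

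Next I would show $B_1' := (B_1\setminus\{r\})\cup\{s\}\in\SC(R)$, i.e. $\cl(B_1')=\cl(R)$. Adding $s$ to $B_1\setminus\{r\}$ adds the edge $(x,y)$ to $[B_1\setminus\{r\},A\cup B]$ (and possibly edges on other vertex sets, which only help), so in $[B_1',A\cup B]$ the components $\alpha$ and $\beta$ merge back into a single component containing $a$ and $b$, while $\gamma=B$ still contains $c$ and — by consistency and Theorem \ref{thm:ahograph} — stays separate. Thus $[B_1',A\cup B]$ has exactly two components, one with $a,b$ and one with $c$, so by Theorem \ref{thm:2cc} $r\in\cl(B_1')$. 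Then $B_1\setminus\{r\}\subseteq B_1'$ gives $\cl(B_1\setminus\{r\})\subseteq\cl(B_1')$, and adjoining $r$ yields $\cl(R)=\cl(B_1)=\cl((B_1\setminus\{r\})\cup\{r\})\subseteq\cl(\cl(B_1')\cup\{r\})=\cl(B_1'\cup\{r\})=\cl(B_1')$ (using idempotence and Bryant's $\cl(\cl(X)\cup Y)=\cl(X\cup Y)$, as in Lemma \ref{lem:identCl}); the reverse inclusion is clear since $B_1'\subseteq\cl(R)$. Hence $\cl(B_1')=\cl(R)$ and $B_1'\in\SC(R)$. Finally, $|B_1'|\le|B_1|$ and $B_1'\in\SC(R)$; since $B_1$ is inclusion-minimal in $\SC(R)$, no proper subset of $B_1'$ of size $<|B_1'|$ can lie in $\SC(R)$ — more carefully, I would use Lemma \ref{lem:sc-check}: it suffices to verify that $\cl(B_1'\setminus\{t\})\neq\cl(R)$ for every $t\in B_1'$, and this follows because removing $t$ from $B_1'$ either disconnects $\alpha\cup\beta$ (if $t=s$, recovering the original bridge gap) or, for $t\in B_1\setminus\{r\}$, would — via Theorem \ref{thm:repres} and Lemma \ref{lem:bridge} applied to $B_1$ — contradict minimality of $B_1$, since $\lmax{t}{B_1'}=\lmax{t}{B_1}$ and the bridge structure for $t$ is inherited. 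Therefore $B_1'\in\minimal(\SC(R))$, establishing (B2).

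The main obstacle I anticipate is the very last point: showing $B_1'$ is actually \emph{minimal}, not merely representative. Producing a reconnecting triple $s$ is clean, but one must ensure the swap does not secretly create a redundancy among the old triples $B_1\setminus\{r\}$. The cleanest route is to lean hard on Theorem \ref{thm:repres} (the maximal sets $\lmax{\cdot}{\cdot}$ are invariant across all of $\SC(R)$) together with the bridge characterization of Lemma \ref{lem:bridge}: for each $t\in B_1\setminus\{r\}$ one has $\lmax{t}{B_1'}=\lmax{t}{R}=\lmax{t}{B_1}=\{A_t,B_t\}$, and $t$ was a bridge in $[B_1,A_t\cup B_t]$; since $[B_1',A_t\cup B_t]$ is obtained from $[B_1,A_t\cup B_t]$ by deleting edge $(a,b)$ and adding edge $(x,y)$, one checks that $t$ remains a bridge there — the only delicate case being when both $(a,b)$ and $(x,y)$ have endpoints inside $A_t\cup B_t$, which can be handled by a short connectivity argument using Lemma \ref{lem:cc}. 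Once every $t\in B_1'$ is a bridge in its own $\lmax{t}{B_1'}$-Ahograph, Lemma \ref{lem:bridge} gives $B_1'\in\minimal(\SC(R))$, and (B2) — hence, via Lemma \ref{lem:basis}, the matroid property — follows.
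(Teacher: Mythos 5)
Your overall strategy is the same as the paper's: verify (B1) and (B2) via Lemma \ref{lem:basis}, use Theorem \ref{thm:repres} to transfer the maximal pairs $\lmax{\cdot}{\cdot}$ between $B_1$, $B_2$ and $R$, and use the bridge characterization of Lemma \ref{lem:bridge}. The first two thirds of your exchange argument are correct and match the paper's Claims 1--3: the existence of a reconnecting triple $s=\rt{xy|z}\in B_2$ with $x\in\alpha$, $y\in\beta$, $z\in A\cup B$, the observation that $s\notin B_1$, and the proof that $B_1'=(B_1\setminus\{r\})\cup\{s\}$ satisfies $\cl(B_1')=\cl(R)$ are all sound.

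The genuine gap is the final step, minimality of $B_1'$, which you correctly identify as the main obstacle but then dispatch with ``a short connectivity argument using Lemma \ref{lem:cc}''. This is precisely where the paper spends the bulk of its proof (Claims 4 and 5), and your sketch omits the one structural fact that makes that argument go through: the third leaf $z$ of the reconnecting triple necessarily lies in $\gamma=B$, not merely in $A\cup B$. The paper proves this separately (second half of its Claim 1, by contradiction via Lemma \ref{lem:intersection-subset} and the bridge property of $r$ in $[B_1,A\cup B]$), and it is used at every turn afterwards. Concretely, suppose some old triple $t=\rt{uv|w}\in B_1\setminus\{r\}$ with $\lmax{t}{B_1'}=\{A_t,B_t\}$ stops being a bridge because the new edge $(x,y)$ creates a $u$--$v$ path. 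To derive a contradiction one needs: (i) every such path uses $(x,y)$, so $x,y,z\in A_t\cup B_t$; (ii) $\{A_t,B_t\}\neq\{A,B\}$; (iii) $B\cap(A_t\cup B_t)\neq\emptyset$ --- which is exactly where $z\in\gamma$ enters --- so that Lemma \ref{lem:intersection-subset} gives $A\cup B\subseteq A_t$ or $A\cup B\subseteq B_t$, hence $w\notin A\cup B$; only then do the detour paths (from $x$ to $a$ inside $\alpha$, the edge $(a,b)$, from $b$ to $y$ inside $\beta$) live in $[B_1\setminus\{t\},A_t\cup B_t]$ and contradict $t$ being a bridge in $B_1$. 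None of (i)--(iii) is a one-line consequence of Lemma \ref{lem:cc}. The case $t=s$ has the same defect: you claim removing $s$ ``recovers the original bridge gap'', but Lemma \ref{lem:bridge} requires bridgeness in $[B_1',\,C]$ with $C$ the union of the pair $\lmax{s}{B_1'}$, which is not a priori $A\cup B$; identifying it with $A\cup B$ again needs $z\in\gamma$ together with the uniqueness argument from Theorem \ref{thm:equals}. As written, your proposal establishes that $B_1'\in\SC(R)$, but the exchange property (B2) --- that $B_1'$ is again \emph{minimal} --- is asserted rather than proved, so the matroid conclusion is not yet reached.
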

\begin{proof}
	In order to show that $\RF$ is a matroid, we show that its collection of bases $\mc{B} = \minimal(\SC(R))$
	satisfies the Conditions (B1) and (B2)  of Lemma \ref{lem:basis}. 
	Recall that $(R,\mathbb{F}_R)$ is an independent system with 
	collection of bases $\mc{B} = \minimal(\SC(R))$ and $\minimal(\SC(R))\neq \emptyset$. Thus, 
	Condition (B1) is trivially satisfied. 
	The proof of Condition (B2) consists of several steps (Claim 1 - 5). 

	We fix the notion as follows:
	We assume that $R_1,R_2\in \mc{B}$, 
	$\rt{ab|c}\in R_1\setminus R_2$ and  $\lmax{\rt{ab|c}}{R_1}=\{A,B\} \in \Lmax{R_1}{R_1} $.
	Moreover, we will frequently make use of
	$\Lmax{R_1}{R_1} = \Lmax{R}{R} = \Lmax{R_2}{R_2}$, which is because of $\cl(R_1)=\cl(R)=\cl(R_2)$
	and Theorem \ref{thm:repres}.
	Furthermore, 
	Lemma \ref{lem:bridge} implies that $\rt{ab|c}$ is a bridge in $[R_1,A\cup B]$
	and that $[R_1\setminus\{\rt{ab|c}\},A\cup B]$ decomposes into the connected components
	 $\alpha,\beta,\gamma$	with $a\in \alpha$, $b\in \beta$ and $c\in \gamma$. 
 	 W.l.o.g.\ we will assume that $a,b\in A, c\in B$ and thus,	$A=\alpha\cup \beta$ and $B=\gamma$.

\smallskip
  \begin{description}
  \item[\emph{Claim 1:}] \emph{There exists a triple $\rt{a'b'|c'} \in R_2$ with 
		$a'\in \alpha$, 		$b'\in \beta$ and	$c'\in \gamma$.  }

    \emph{Proof of Claim 1.}  We begin by showing that there is a triple 
	$\rt{a'b'|c'} \in R_2$ such that $a'\in \alpha$, 	$b'\in \beta$ and	$c'\in A\cup B$
	and then show  that $c'\in \gamma$. 

	Assume for contradiction that there is no triple $\rt{a'b'|c'} \in R_2$ such that $a'\in \alpha$, 	$b'\in \beta$ and	$c'\in A\cup B$.
	Hence, there is no edge $(x,y)$ in $[R_2,A\cup B]$ for any $x\in \alpha$ and $y\in \beta$, that is, 
	$\langle A \rangle$ is disconnected in  $[R_2,A\cup B]$. But then 
	$\{A,B\} \notin \Lmax{R_2}{R_2}$; contradicting 
	$\Lmax{R_1}{R_1} = \Lmax{R_2}{R_2}$. 
	Thus there is a triple  $\rt{a'b'|c'} \in R_2$ such that $a'\in \alpha$, 	$b'\in \beta$ and	$c'\in A\cup B$.
	
	We continue to show that $c'\in \gamma$. 
	Assume for contradiction that $c'\notin \gamma$. Since $\gamma=B$, we have $c'\in A$. 
	Let $\lmax{\rt{a'b'|c'}}{R_2} = \{A',B'\}\in \Lmax{R_2}{R_2}$. Note, since $\Lmax{R_1}{R_1} = \Lmax{R_2}{R_2}$ 
	we also have $\{A',B'\} \in \Lmax{R_1}{R_1}$ 
	and hence, 
	the graph $[R_1,A'\cup B']$ has the two connected components $A'$ and $B'$, one containing $a',b'$ and the other $c'$.
	Furthermore, since $a',b',c'\in A$ we have $A\cap A'\neq \emptyset$ and $A\cap B'\neq \emptyset$. 
	Hence, 	$A'\cap(A\cup B)\neq \emptyset$ and $B'\cap(A\cup B)\neq \emptyset$,
	and we can apply Lemma \ref{lem:intersection-subset} to conclude that 
	$A'\cup B'\subseteq A$. Therefore, Lemma \ref{lem:subgraph}  implies that $[R_1,A'\cup B']\subseteq [R_1,A\cup B]$.
	In particular, both $\langle A' \rangle \subseteq \langle A \rangle$ and 
	$\langle B' \rangle \subseteq \langle A \rangle$ are 
	connected subgraphs in  $[R_1,A\cup B]$. 
	Since $c\notin A$ we have $c\notin A'\cup B'$
	and thus, $[R_1\setminus \{\rt{ab|c}\}, A'\cup B'\}] = [R_1,A'\cup B']$. 
	Hence, $\langle A' \rangle$ and $\langle B' \rangle$ remain connected subgraphs in 
   $[R_1\setminus \{\rt{ab|c}\}, A\cup B\}]$. 
	Since $\lmax{\rt{a'b'|c'}}{R_2} = \{A',B'\}$ it holds that either $a',b'\in A'$ and $c'\in B'$
	or $a',b'\in B'$ and $c'\in A'$. 	Assume that $a',b'\in A'$. 
	By choice of $\rt{a'b'|c'} \in R_2$ we have $a'\in \alpha$ and	$b'\in \beta$. 
	Since $A', \alpha$ and $\beta$ induce a connected subgraph in  
	$[R_1\setminus \{\rt{ab|c}\}, A\cup B\}]$, respectively, and since $a'\in A'\cap \alpha$ and 
	$b'\in  A'\cap \beta$, 
	the induced subgraph $\langle A'\cup \alpha \cup \beta \rangle$ is connected 
	in $[R_1\setminus \{\rt{ab|c}\}, A\cup B\}]$. However, since  $a\in \alpha$ 
	and $b\in \beta$, the triple $\rt{ab|c}$ is not a bridge in $[R_1,A\cup B]$; a contradiction to Lemma \ref{lem:bridge}.
	By analogous arguments one obtains a contradiction if $a',b'\in B'$.
	Therefore, there is a triple  $\rt{a'b'|c'} \in R_2$ such that $a'\in \alpha$, 	$b'\in \beta$ and	$c'\in \gamma$.
%    \hfill{$\diamond$}

	 \hfill $_{\textrm{\footnotesize{-- End Proof Claim 1 -- }}}$
  \end{description} 	
	In what follows, let $\rt{a'b'|c'} \in R_2$ be chosen such that $a'\in \alpha$, 	$b'\in \beta$ and	$c'\in \gamma$.
	\smallskip		 
	\begin{description}
	  \item[\emph{Claim 2:}] \emph{It holds that $\rt{a'b'|c'}\in R_2\setminus R_1$.}

    \emph{Proof of Claim 2.} 
		Recall that $\rt{ab|c}\in R_1\setminus R_2$ and thus, 
		the triples $\rt{ab|c}$ and $\rt{a'b'|c'}$ must be distinct.
		Assume for contradiction
		that $\rt{a'b'|c'}\in R_1$. In this case, one can easily verify that there
		are either two edges $(a,b)$ and $(a',b')$ in $[R_1,A\cup B]$ connecting 
		$\alpha$ and $\beta$ or, if $(a,b) = (a',b')$, then the edge $(a,b)$ is supported by two triples.
		In either case, 
		$\rt{ab|c}$ is not a bridge in $[R_1,A\cup B]$; a contradiction to Lemma \ref{lem:bridge}.
  %  \hfill{$\diamond$}
	\hfill $_{\textrm{\footnotesize{-- End Proof Claim 2 -- }}}$
  \end{description} 	
	In what follows, we set $\Rn \coloneqq (R_1\setminus \{\rt{ab|c}\}) \cup \{\rt{a'b'|c'}\}$.
	\smallskip		 
	\begin{description}
	  \item[\emph{Claim 3:}] \emph{
		It holds that $\Rn\in \SC(R)$.}

    \emph{Proof of Claim 3.} Clearly, $\Rn\subseteq R$. Hence,
		in order to show that $\Rn\in \SC(R)$ it remains to show that 
		$\cl(\Rn) = \cl(R)$. 
		To this end, recap that  $[R_1\setminus\{\rt{ab|c}\},A\cup B]$ has the connected 
		components  $\alpha,\beta,\gamma$	with $a,a'\in \alpha$, $b,b'\in \beta$ and $c,c'\in \gamma$.
		Moreover,  Lemma \ref{lem:subgraph} implies that $[R_1\setminus\{\rt{ab|c}\},A\cup B]$ is a subgraph of $[\Rn,A\cup B]$
		and thus, $\langle \alpha \rangle$ and $\langle \beta \rangle$ 
		remain connected subgraphs in $[\Rn,A\cup B]$. However, since $a',b',c'\in A\cup B$ and $\rt{a'b'|c'}\in \Rn$
		we have an additional edge in $[\Rn,A\cup B]$
		that connects $\langle \alpha \rangle$ and $\langle \beta \rangle$ 
		by the edge $(a',b')$. Hence, $A=\alpha \cup \beta$
		induces a connected subgraph in $[\Rn,A\cup B]$, 
		while $\langle B \rangle = \langle\gamma\rangle$ remains unchanged and thus still
		provides a connected component in $[\Rn,A\cup B]$. 
		In summary, $[\Rn,A\cup B]$ has two connected components, 
		where $a,b\in A$ and $c\in B$. Theorem \ref{thm:2cc} implies that
		$\rt{ab|c} \in \cl(\Rn)$. Application of 
		Lemma \ref{lem:identCl} yields
		 $\cl(\Rn) = \cl(R_1\setminus \{\rt{ab|c}\}) \cup \{\rt{a'b'|c'}\}) = \cl(R_1\cup \{\rt{a'b'|c'}\})$. 
		 Moreover, it holds that $\cl(R) = \cl(R_1) \subseteq  \cl(R_1\cup \{\rt{a'b'|c'}\}) = \cl(\Rn) \subseteq \cl(R)$
		and therefore, $\cl(R) = \cl(\Rn)$. Thus, $\Rn\in \SC(R)$.
	 %   \hfill{$\diamond$}
	\hfill $_{\textrm{\footnotesize{-- End Proof Claim 3 -- }}}$
  \end{description} 	
 	In what follows, we want to show that all triples $\rt{xy|z}\in \Rn$ are bridges in 
	$[\Rn,A''\cup B'']$  where $\lmax{\rt{xy|z}}{\Rn}=\{A'',B''\}$ (see Claim 5). 
	In this case, Lemma \ref{lem:bridge} would imply that $\Rn\in \minimal(\SC(R))$. 
	To this end, however, we first need to prove Claim 4. 

	\smallskip		 
	\begin{description}
	  \item[\emph{Claim 4:}] \emph{
		Assume there is a triple $\rt{xy|z}\in \Rn$ which is not a bridge in  $[\Rn,A''\cup B'']$  where $\lmax{\rt{xy|z}}{\Rn}=\{A'',B''\}$.
		Then, $\rt{xy|z}\neq \rt{a'b'|c'}$; $a',b',c'\in A''\cup B''$; $x$ and $y$ are connected by a path in in $[\Rn \setminus \{\rt{xy|z}\},A''\cup B'']$ and 
		every path $P_{xy}$ in $[\Rn \setminus \{\rt{xy|z}\},A''\cup B'']$
		contains the edge $(a',b')$; and $\{A'',B''\}\neq \{A,B\}$. }

   \emph{Proof of Claim 4.} 			
			Assume that  $\rt{xy|z}\in \Rn$ is not a bridge  in  $[\Rn,A''\cup B'']$. 
			First, we show that $\rt{xy|z}\neq \rt{a'b'|c'}$. 
			Assume for contradiction that $\rt{xy|z} = \rt{a'b'|c'}$. 
			Now, we show that in this case $\{A'',B''\} = \{A,B\}$. 
			Note, since $\lmax{\rt{a'b'|c'}}{\Rn}=\{A'',B''\}$ and $\Rn\in \SC(R)$, 
			we can apply  Theorem \ref{thm:repres} and conclude that $\{A'',B''\}= \lmax{\rt{a'b'|c'}}{R}$. 
			Since $\lmax{\rt{ab|c}}{R}=\{A,B\}$, we have 
			$\{A,B\},\{A'',B''\} \in \Lmax{R}{R}$.
		Moreover, since by construction  $a,b,a',b'\in A$ and $c'\in B$, 
			we have $A\cap (A''\cup B'')\neq \emptyset$
			and $B\cap (A''\cup B'')\neq \emptyset$. 
			Hence, we can argue analogously as 
			in the last part of the proof of Theorem \ref{lem:nonempty}
			to conclude that $\{A,B\} = \{A'',B''\}$.
			Now, since 	$\rt{xy|z} = \rt{a'b'|c'}$, the triple $\rt{a'b'|c'}$
			is not a bridge in $[\Rn,A''\cup B'']$. Thus, there is a path 
			$P_{a'b'}$	in $[\Rn \setminus \{\rt{a'b'|c'}\},A''\cup B'']  = 
			[\Rn \setminus \{\rt{a'b'|c'}\},A\cup B] = [R_1 \setminus \{\rt{ab|c}\},A\cup B]$. 
			However, this implies that $P_{a'b'}$ connects $a'\in \alpha$ and $b'\in\beta$ in
			$[R_1 \setminus \{\rt{ab|c}\},A\cup B]$ and therefore, $\rt{ab|c}$ is not 
			a bridge in $[R_1,A\cup B]$; a contradiction to $R_1\in \minimal(\SC(R))$
			and Lemma \ref{lem:bridge}.  
			Hence $\rt{xy|z}\neq \rt{a'b'|c'}$. 

			We continue to show that every path $P_{xy}$ in $[\Rn \setminus \{\rt{xy|z}\},A''\cup B'']$
			contains the edge $(a',b')$. 
 			Since $\rt{xy|z}\in \Rn$ is not a bridge  in  $[\Rn,A''\cup B'']$
			there must be a path $P_{xy}$ in 
			$[\Rn \setminus \{\rt{xy|z}\},A''\cup B'']$. Assume for contradiction that $P_{xy}$ does not contain the edge $(a',b')$.
			Hence, $P_{xy}$ still connects $x$ and $y$ in $[\Rn \setminus \{\rt{xy|z},\rt{a'b'|c'}\},A''\cup B'']$.
			Since $\Rn \setminus \{\rt{xy|z},\rt{a'b'|c'}\}\subseteq R_1 \setminus \{\rt{xy|z}\}$ and by Lemma \ref{lem:subgraph}, 
			the graph	$[\Rn \setminus \{\rt{xy|z},\rt{a'b'|c'}\},A''\cup B'']$ is a subgraph of 
			$[R_1 \setminus \{\rt{xy|z}\},A''\cup B'']$. Therefore,  the path 
			$P_{xy}$ connects $x$ and $y$ in $[R_1 \setminus \{\rt{xy|z}\},A''\cup B'']$. 
			Note, Theorem \ref{thm:repres} implies that $\lmax{\rt{xy|z}}{R_1} = \{A'',B''\}$.
			Since $\rt{a'b'|c'} \neq \rt{xy|z} \in \Rn$, we have $\rt{xy|z}\in R_1$. 
			But then $\rt{xy|z}$ is not a bridge in $[R_1,A''\cup B'']$; 
			a contradiction to Lemma \ref{lem:bridge}. 

			The latter, in particular, implies that  $a',b'\in A''\cup B''$. 
			Now, assume for contradiction that $c'\notin  A''\cup B''$. Hence,
			$[\Rn \setminus \{\rt{xy|z},\rt{a'b'|c'}\},A''\cup B''] = [\Rn \setminus \{\rt{xy|z}\},A''\cup B'']$.
			By the preceding arguments, every path $P_{xy}$ in $[\Rn \setminus \{\rt{xy|z}\},A''\cup B'']$
			contains the edge $(a',b')$.	Again, since $[\Rn \setminus \{\rt{xy|z},\rt{a'b'|c'}\},A''\cup B'']$ is a subgraph of 
			$[R_1 \setminus \{\rt{xy|z}\},A''\cup B'']$ this path is also contained in $[R_1 \setminus \{\rt{xy|z}\},A''\cup B'']$ and
			the triple $\rt{xy|z}$ is not a bridge in $[R_1,A''\cup B'']$; 
			a contradiction to Lemma \ref{lem:bridge} and $\lmax{\rt{xy|z}}{R_1}=\{A'',B''\}$. 
			Thus, $c'\in  A''\cup B''$	    

			Finally, we show that  $\{A'',B''\}\neq \{A,B\}$. 
			Assume for contradiction that $\{A'',B''\} = \{A,B\}$. W.l.o.g.\ let $A''=A = \alpha\cup \beta$ and $B''=B = \gamma$. 
			First, we show that neither $x\in \gamma$ nor $y\in \gamma$.
		   Assume w.l.o.g.\ that $x\in \gamma$. 
			Note the path  $P_{xy}$ with edge $(a',b')$ in $[\Rn \setminus \{\rt{xy|z}\},A''\cup B'']$ 
			is also contained $[\Rn,A''\cup B'']$. However, since $a',b'\in A''$ and $x\in \gamma=B''$
			this path $P_{xy}$ connects the two connected components $A'', B''$ in $[\Rn,A''\cup B'']$;
			a contradiction. \\
			We continue to show that neither $x,y\in \alpha$ nor $x,y\in \beta$.	
			Assume for contradiction that $x,y\in \alpha$. 
			Since $[\Rn \setminus \{\rt{xy|z}\},A''\cup B'']$ contains a path  $P_{xy}$ with edge $(a',b')$, 
			and $x,y\in \alpha$ there must be a second edge $(a'',b'')$ distinct from $(a',b')$ in    $P_{xy}$ where 
			$a''\in \alpha, b''\in \beta$. 	
			Since $\Rn \setminus \{\rt{xy|z}\} = (R_1\setminus\{\rt{ab|c},\rt{xy|z}\})\cup \{\rt{a'b'|c'}\}$, 
			$\{A'',B''\}=\{A,B\}$ and removal of  $\{\rt{a'b'|c'}\}$ would still preserve the edge $(a'',b'')$,
  			this edge $(a'',b'')$ must also be contained in $[R_1\setminus\{\rt{ab|c},\rt{xy|z}\}, A\cup B]$. 
			Since $[R_1\setminus\{\rt{ab|c},\rt{xy|z}\}, A\cup B]$ is a subgraph of 
			$[R_1\setminus\{\rt{ab|c}\}, A\cup B]$, the latter graph contains the edge $(a'',b'')$
			that connects the components $\alpha$ and $\beta$. 
			But then $\rt{ab|c}$ is not a bridge in $[R_1, A\cup B]$;
			   a contradiction to $R_1\in \minimal(\SC(R))$ and Lemma \ref{lem:bridge}. 
		Hence, $x$ and $y$ cannot be both in $\alpha$, and by similar arguments, not both in $\beta$. \\
		Thus, there are only two cases left: $x\in \alpha$ and $y\in \beta$, or 
		$y\in \alpha$ and $x\in \beta$. Assume w.l.o.g.\ that $x\in \alpha$ and $y\in \beta$.
		Since $\rt{xy|z}\in R_1\setminus \{\rt{ab|c}\}$, there must be the edge 
		$(x,y)$ in $[R_1\setminus  \{\rt{ab|c}\}, A\cup B]$, in which case 
		$\alpha$ and $\beta$ form a connected component. 
		Again, $\rt{ab|c}$ is not a bridge in $[R_1, A\cup B]$ and we obtain
	   a contradiction to $R_1\in \minimal(\SC(R))$ and Lemma \ref{lem:bridge}. \\
		Therefore, if $\{A'',B''\}=\{A,B\}$, then $x,y\notin \alpha\cup\beta\cup \gamma = A''\cup B''$; a contradiction since we 
		assumed that $\lmax{\rt{xy|z}}{\Rn}=\{A'',B''\}$ and hence, $x,y\in A''\cup B''$.
	\hfill $_{\textrm{\footnotesize{-- End Proof Claim 4 -- }}}$

	 \item[\emph{Claim 5:}] \emph{$\Rn \in \minimal(\SC(R))$.}

   \emph{Proof of Claim 5.} 	In order to show that $\Rn \in \minimal(\SC(R))$ we use Lemma \ref{lem:bridge}
		and show that each triple $\rt{xy|z}\in \Rn$ must be a bridge in  $[\Rn,A''\cup B'']$  
		where $\lmax{\rt{xy|z}}{\Rn}=\{A'',B''\}$.

		Assume for contradiction, that there is a triple $\rt{xy|z}\in \Rn$ that is not a bridge in 
  	   $[\Rn,A''\cup B'']$. Claim 4.\ implies that $\rt{xy|z}\neq \rt{a'b'|c'}$ and thus, in particular, 
		$\rt{xy|z}\in R_1\setminus \{\rt{ab|c}\}$. Moreover, $a',b',c'\in A''\cup B''$, $\{A'',B''\}\neq\{A,B\}$ and every path 
		$P_{xy}$ in $[\Rn \setminus \{\rt{xy|z}\},A''\cup B'']$
		contains the edge $(a',b')$. 
		Recap that $a,a'\in \alpha$, $b,b'\in \beta$, $c,c'\in \gamma$, $A=\alpha\cup \beta$
		and $B=\gamma$.
		
		Recap that $\lmax{\rt{ab|c}}{R_1} = \{A,B\}\in \Lmax{R_1}{R_1}$. 
		Claim 3 implies that $\Rn\in \SC(R)$. Thus, we can apply	
		Theorem \ref{thm:repres} and conclude that 
		$\{A'',B''\} =\lmax{\rt{xy|z}}{\Rn} = \lmax{\rt{xy|z}}{R} = \lmax{\rt{xy|z}}{R_1}$.
		Hence, $\{A'',B''\}\in \Lmax{R_1}{R_1}$. 
		Moreover, since $a',b',c'\in A''\cup B''$ as well as $a',b'\in A$ and $c'\in B$
		it holds that $A\cap(A''\cup B'')\neq \emptyset$ and $B\cap(A''\cup B'')\neq \emptyset$. 
		Thus, 	we can apply Lemma \ref{lem:intersection-subset} and conclude that 
		$A\cup B\subseteq A''$ or  $A\cup B\subseteq B''$. 
		W.l.o.g.\ assume $A\cup B\subseteq A''$.

		Denote one of the paths in $[\Rn \setminus \{\rt{xy|z}\},A''\cup B'']$ 
		that connect $x$ and $y$ by $P_{xy}$.
 		Claim 4 implies that $P_{xy}$ contains the edge $(a',b')$.
		Since $a',b'\in A''$ it must hold that  
		$x,y\in A''$ as otherwise  
		$P_{xy}$ would connect $A''$ and $B''$ in $[\Rn,A''\cup B'']$. 
		Therefore, $z\in B''$ and hence $z\notin A\cup B$. 
		Since  $P_{xy}$ contains the edge $(a',b')$, it can be decomposed into the paths 
		 $P_{xa'}$ and  $P_{b'y}$ (resp. $P_{xb'}$ and  $P_{ya'}$) and the edge  $(a',b')$. 
		W.l.o.g.\ assume that  $P_{xy}$ is composed of $P_{xa'}$, $(a',b')$ and  $P_{b'y}$.
 	Note, since neither $P_{xa'}$ nor $P_{b'y}$ contains the edge $(a',b')$, we can conclude that
		both paths are contained in 
		$[\Rn\setminus\{\rt{xy|z},\rt{a'b'|c'}\}, A''\cup B''] = 
		[R_1\setminus\{\rt{xy|z},\rt{ab|c}\}, A''\cup B''] \subseteq [R_1\setminus\{\rt{xy|z}\}, A''\cup B'']$.
		Furthermore, since  $\alpha$ and $\beta$
		induce connected subgraphs in $[R_1\setminus\rt{ab|c},A\cup B]$ and  $a,a'\in \alpha$, $b,b'\in \beta$,  
		there are paths $P_{aa'}$ and $P_{bb'}$ in  
		$[R_1\setminus\{\rt{ab|c}\},A\cup B]$. 
		Since $z\notin A\cup B$ and $A\cup B\subseteq A''$, we have 
		$[R_1\setminus\{\rt{ab|c}\},A\cup B] = [R_1\setminus\{\rt{ab|c}, \rt{xy|z}\},A\cup B] \subseteq[R_1\setminus\{\rt{xy|z}\},A''\cup B'']$. 
		Hence, the paths $P_{aa'}$ and $P_{bb'}$ are also contained in  $[R_1\setminus\{\rt{xy|z}\},A''\cup B'']$.
		In summary, $[R_1\setminus\{\rt{xy|z}\},A''\cup B'']$ contains the paths  $P_{aa'}$, $P_{bb'}$, $P_{xa'}$ and $P_{b'y}$
		but also the edge $(a,b)$, since $\rt{ab|c}\in R_1\setminus \{\rt{xy|z}\}$ and $a,b,c\in A\cup B\subseteq A''$. 
		Hence, we can combine the four paths and the edge $(a,b)$ to a walk 
		in $[R_1\setminus\{\rt{xy|z}\},A''\cup B'']$ that connects $x$ and $y$. 
		However, this implies that $\rt{xy|z}$ is not a bridge in $[R_1,A''\cup B'']$; 
		a contradiction to $\lmax{\rt{xy|z}}{R_1} =\{A'',B''\}$ and Lemma  \ref{lem:bridge}.

		In summary, for all cases for which there is  a triple $\rt{xy|z}\in \Rn$ that is not a bridge in 
  	   $[\Rn,A''\cup B'']$ we obtain a contradiction. Hence, 
		each triple $\rt{xy|z}\in \Rn$ must be a bridge in   $[\Rn,A''\cup B'']$
		and we can apply Lemma \ref{lem:bridge} to conclude that  $\Rn \in \minimal(\SC(R))$.  
	%\hfill{$\diamond$}
	\hfill $_{\textrm{\footnotesize{-- End Proof Claim 5 -- }}}$
 \end{description} 	

We have shown that for any $R_1,R_2\in \mc{B} = \minimal(\SC(R))$ and 	$\rt{ab|c}\in R_1\setminus R_2$
there is a triple   $\rt{a'b'|c'}\in R_2\setminus R_1$ such that 
$\Rn = (R_1\setminus \{\rt{ab|c}\}) \cup \{\rt{a'b'|c'}\} \in \mc{B}$. 
Hence, we can apply Lemma \ref{lem:basis} to conclude that $(R,\mathbb{F}_R)$ is a matroid.
\end{proof}

In order to avoid confusion, we emphasize that  
the closure operator $\cl(R)$ for rooted triple sets $R$ 
defined here is not a matroid closure operator $\cl_M$ \cite{Oxley:11,korte2012combinatorial}. 
Note, since $M = (R,\mathbb F_R)$ is a matroid, the following property must be satisfied for $\cl_M$, 
cf.\ \cite[Lemma 1.4.3]{Oxley:11}:
\[X\subseteq R, r\in R \text{ and } r'\in \cl_M(X\cup\{r\})\setminus \cl_M(X) \implies r\in \cl_M(X\cup\{r'\}).\,\]
To see that $\cl(R)$ does not fulfill this property in general, 
consider the example in Figure \ref{fig:exmpl}. To recap, 
$R_1 = \{\rt{ab|c},\rt{ac|d}\}$, $R_3=\{\rt{ac|d},\rt{bc|d}\}$
and $\cl(R_1)=\cl(R)=\{\rt{ab|c},\rt{ac|d},\rt{bc|d},\rt{ab|d}\}$,
but $\cl(R_3) = \cl(R)\setminus \rt{ab|c}$. 
Now, put $X=\{\rt{ac|d}\}$ and $r=\rt{ab|c}$. Thus, 
$r' = \rt{bc|d}\in \cl(X\cup\{r\})\setminus \cl(X) = \cl(R_1)\setminus \{\rt{ab|c}\}$. 
However, $r = \rt{ab|c}\notin \cl(X\cup \{r'\}) =\cl(R_3)$.
The latter result has already been observed by David Bryant \cite{Bryant97}, however, 
the matroid structure of $(R,\mathbb F_R)$ was not discovered.

Note, each minimum representative set $R'\in \minimum(\SC(R))$ is also minimal. 
Thus, $\minimum(\SC(R))\subseteq \minimal(\SC(R))$. However, since 
$(R,\mathbb{F}_R)$ is a matroid with collection of bases $\minimal(\SC(R))$, all elements in 
$\minimal(\SC(R))$ have the same cardinality \cite{Oxley:11}. Therefore,
all basis elements of the matroid $(R,\mathbb{F}_R)$ are of minimum size. 
We summarize this observation in
the following

\begin{theorem}
If $R$ is a consistent triple set, then $\minimal(\SC(R)) = \minimum(\SC(R))$.
\label{thm:min=MIN}
\end{theorem}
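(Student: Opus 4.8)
The final statement, Theorem \ref{thm:min=MIN}, asserts that $\minimal(\SC(R)) = \minimum(\SC(R))$ for any consistent triple set $R$.

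The plan is to derive this as an immediate corollary of the matroid structure already established in Theorem \ref{thm:matroid}. First I would recall that $\minimum(\SC(R)) \subseteq \minimal(\SC(R))$ holds trivially, since any representative set of minimum cardinality clearly admits no proper representative subset and is therefore minimal with respect to inclusion; this inclusion was already observed right after the definition of $\minimal$ and $\minimum$. So it remains only to prove the reverse inclusion $\minimal(\SC(R)) \subseteq \minimum(\SC(R))$.

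For the reverse inclusion, I would invoke the fact that $(R,\mathbb{F}_R)$ is a matroid (Theorem \ref{thm:matroid}) whose collection of bases is exactly $\minimal(\SC(R))$ --- this identification of the bases was noted when $(R,\mathbb{F}_R)$ was defined and verified to be an independent system. A standard fact of matroid theory (see \cite{Oxley:11,korte2012combinatorial}) is that all bases of a matroid have the same cardinality; this follows directly from the exchange axiom (I3) or equivalently from (B2). Hence every element of $\minimal(\SC(R))$ has the same size, say $k$. Since $\minimum(\SC(R))$ is non-empty (it contains at least one element of smallest size among all members of $\SC(R)$, and $\SC(R) \neq \emptyset$ as $R \in \SC(R)$) and $\minimum(\SC(R)) \subseteq \minimal(\SC(R))$, every minimum representative set also has size $k$. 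Therefore every set in $\minimal(\SC(R))$ has size $k$, which is the minimum size over all of $\SC(R)$, so each such set lies in $\minimum(\SC(R))$. This gives $\minimal(\SC(R)) \subseteq \minimum(\SC(R))$ and hence equality.

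There is essentially no obstacle here: the entire content of the theorem has been front-loaded into Theorem \ref{thm:matroid}, and this statement is just the harvesting of the well-known "equicardinality of bases" property of matroids. The only minor point to be careful about is making explicit that $\minimum(\SC(R))$ is non-empty and that it sits inside the base family, so that the common cardinality of the bases coincides with the true minimum --- but both facts were already remarked in the text preceding the theorem.
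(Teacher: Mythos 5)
Your proposal is correct and follows essentially the same route as the paper: the paper also observes $\minimum(\SC(R))\subseteq\minimal(\SC(R))$ and then invokes Theorem \ref{thm:matroid} together with the equicardinality of matroid bases (whose collection is $\minimal(\SC(R))$) to conclude that every minimal representative set has minimum size. Your extra remark that $\minimum(\SC(R))$ is non-empty makes the harvesting step fully explicit but introduces no new idea.
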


In order to find a minimum representative set $R'$ of $R$ one can apply a simple greedy algorithm.  
Algorithm \ref{alg:greedy} computes a basis element of the matroid  $(R,\mathbb{F}_R)$
		and can easily be adapted to find maximum weighted bases, an issue that might 
		be important for applications in phylogenetics, where the weight of a rooted triple corresponds
		to a statistical confidence value or any other measure associated with the underlying triples. 

\begin{lemma}
	Algorithm \ref{alg:greedy} computes a subset $R'\subseteq R$ with
	$\cl(R') = \cl(R)$
	of minimum size in $\mc{O}(|R|^2|L_R|)$.  
	\label{lem:greedy}
\end{lemma}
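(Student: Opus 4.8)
The plan is to analyse the deletion-greedy underlying Algorithm~\ref{alg:greedy}: initialise $R'\coloneqq R$ and scan the triples of $R$ once; for the currently inspected $\rt{ab|c}\in R'$, test whether $\rt{ab|c}\in \cl(R'\setminus\{\rt{ab|c}\})$ and, if so, delete $\rt{ab|c}$ from $R'$; finally return $R'$. The point that makes this efficient is that the membership test needs no closure computation: since $R'=(R'\setminus\{\rt{ab|c}\})\cup\{\rt{ab|c}\}$ stays consistent throughout (see below), Lemma~\ref{lem:algo-cl} shows that $\rt{ab|c}\in\cl(R'\setminus\{\rt{ab|c}\})$ holds if and only if both $(R'\setminus\{\rt{ab|c}\})\cup\{\rt{ac|b}\}$ and $(R'\setminus\{\rt{ab|c}\})\cup\{\rt{bc|a}\}$ are inconsistent; hence each test is two runs of \texttt{BUILD}.

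First I would establish the invariant that $R'\in\SC(R)$, in particular that $R'$ is consistent. Initially $R'=R\in\SC(R)$, and $R'\subseteq\cl(R')=\cl(R)$ together with consistency of $\cl(R)$ gives a tree displaying $R'$. Whenever $\rt{ab|c}$ is deleted we have $\rt{ab|c}\in\cl(R'\setminus\{\rt{ab|c}\})$ by construction, so Lemma~\ref{lem:identCl} yields $\cl(R'\setminus\{\rt{ab|c}\})=\cl(R')=\cl(R)$, preserving the invariant (and hence consistency, which legitimises the use of Lemma~\ref{lem:algo-cl} in the test). Thus the returned $R'$ satisfies $\cl(R')=\cl(R)$.

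It remains to see that the output has minimum size; I would show $R'\in\minimal(\SC(R))$ and then invoke Theorem~\ref{thm:min=MIN}. By Lemma~\ref{lem:sc-check} it suffices to check that $\cl(R'\setminus\{r\})\neq\cl(R')$, equivalently (Lemma~\ref{lem:identCl}) $r\notin\cl(R'\setminus\{r\})$, for every $r\in R'$ at termination. The step needing care is a monotonicity property: if $\rt{ab|c}\notin\cl(S\setminus\{\rt{ab|c}\})$ for the set $S$ current when $\rt{ab|c}$ was inspected and left in place, then $\rt{ab|c}\notin\cl(S'\setminus\{\rt{ab|c}\})$ for every later $S'\subseteq S$ with $\rt{ab|c}\in S'$; this is immediate from monotonicity of $\cl$, since $S'\setminus\{\rt{ab|c}\}\subseteq S\setminus\{\rt{ab|c}\}$ gives $\cl(S'\setminus\{\rt{ab|c}\})\subseteq\cl(S\setminus\{\rt{ab|c}\})\not\ni\rt{ab|c}$. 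Consequently every triple that survives the single scan is non-removable at termination, so no $r\in R'$ satisfies $\cl(R'\setminus\{r\})=\cl(R')$; hence $R'\in\minimal(\SC(R))=\minimum(\SC(R))$ by Theorem~\ref{thm:min=MIN}.

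For the running time, the scan has at most $|R|$ iterations, each performing two \texttt{BUILD} calls on triple sets with at most $|R|$ triples over at most $|L_R|$ leaves, at cost $\mc{O}(|L_R||R|)$ each, plus $\mc{O}(|R|)$ bookkeeping to update $R'$; this totals $\mc{O}(|R|^2|L_R|)$. The main obstacle is the minimality claim --- ruling out that the greedy halts at a representative set that is merely \emph{locally} minimal. This is exactly what the matroid structure provides: since $(R,\mathbb{F}_R)$ is a matroid (Theorem~\ref{thm:matroid}), all bases have equal size, so every $\minimal(\SC(R))$-set is a $\minimum(\SC(R))$-set (Theorem~\ref{thm:min=MIN}); the monotonicity remark is what lets the algorithm realise this guarantee within a single $\mc{O}(|R|)$-iteration pass rather than by repeated rescans.
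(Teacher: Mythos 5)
Your proposal is correct and follows essentially the same route as the paper: justify the two-consistency-check test via Lemma \ref{lem:algo-cl}, maintain the invariant $\cl(R\setminus R_{\tmp})=\cl(R)$ via Lemma \ref{lem:identCl}, use monotonicity of $\cl$ to see that every surviving triple stays non-redundant, and conclude minimum cardinality from Theorem \ref{thm:min=MIN}, with the same $\mc{O}(|R|^2|L_R|)$ accounting. The only cosmetic difference is that you argue minimality directly (each kept triple remains outside the closure of the shrinking remainder), whereas the paper runs the same monotonicity argument as a proof by contradiction.
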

\begin{proof}
	By Lemma \ref{lem:algo-cl}, 
	it suffices to decide whether a triple 
	$\rt{ab|c}$ is contained in $\cl(R\setminus (R_{\tmp}\cup\{\rt{ab|c}\}))$
	by the two consistency checks in the IF-condition. 

	Let $R_{\tmp} = \{r_1,\dots,r_k\}$ where the indices of the triples	are chosen w.r.t.\ the order in which they are added to $R_{\tmp}$.
	By construction, $r_i\in R_{\tmp}$ if $r_i\in \cl(R\setminus \{r_1,\dots,r_i\})$.
	Lemma \ref{lem:identCl} implies that for 
	the first triple  $r_1\in \cl(R\setminus \{r_1\})$ it holds that 
	$\cl(R\setminus \{r_1\}) = \cl(R)$. Next, $r_2$ is added to $R_{\tmp}$ 
	that is, $r_2\in \cl(R\setminus \{r_1,r_2\})$ and again by Lemma \ref{lem:identCl},
	 $\cl(R\setminus \{r_1,r_2\}) = \cl(R\setminus \{r_1\}) = \cl(R)$.
	Inductively, when $r_k$ is
	chosen we have 
	$r_k\in \cl(R\setminus R_{\tmp}\}) = 
	\cl(R\setminus \{r_1,\dots,r_{k-1}\}) = \cdots = \cl(R\setminus \{r_1\}) = \cl(R)$. 
	Since by construction, $R_{\min} = R\setminus R_{\tmp}$, it holds that  
	$\cl(R_{\min}) = \cl(R\setminus R_{\tmp}) = \cl(R)$. 

	We continue to show that $R_{\min} \in \minimal(\SC(R))$.
	Assume for contradiction that 
	there is a subset $R''\subsetneq R_{\min}$	with $\cl(R'')   = \cl(R)$.
	Note, $R'' =  R_{\min} \setminus R'$ for some non-empty subset $R'\subseteq R_{\min}$. 
	Thus, $\cl( R_{\min} \setminus R')   = \cl(R) =  \cl(R_{\min})$.
	Lemma \ref{lem:identCl} implies that there is a triple $r\in R'$ such
	that  $\cl( R_{\min} \setminus \{r\}) =  \cl( R_{\min})$. 
	Note, since $r\in R'\subseteq R_{\min} =R\setminus R_{\tmp}$ it holds that $r\notin  R_{\tmp}$. 

	Consider the step when $r$ is chosen 	in  Alg.\ \ref{alg:greedy}. 
	If $R_{\tmp}=\emptyset$ before this step, 
	we would have $r\notin \cl(R\setminus \{r\})$, since $r$ is not added to 
	$R_{\tmp}$.
	However, since $r\in R_{\min}$ and $R_{\min} \setminus \{r\} \subseteq R \setminus \{r\}$ and  it must hold that
	$r\in \cl(R_{\min}) =\cl( R_{\min} \setminus \{r\}) \subseteq  \cl(R \setminus \{r\})$;
	a contradiction. 
	If $R_{\tmp}$ is not empty and thus, $R_{\tmp}=\{r_1,\dots,r_i\}$ 
	before the step 
	when $r$ is chosen in  Alg.\ \ref{alg:greedy}, we would have 
	$r\notin \cl(R\setminus \{r_1,\dots,r_i,r\})$, since $r$ is not added to 
	$R_{\tmp}$. 
	However, since $r\in R_{\min}$ and $R_{\min} \setminus \{r\} = R\setminus \{r_1,\dots,r_k,r\} \subseteq R\setminus \{r_1,\dots,r_i,r\}$
	it must hold that $r\in \cl(R_{\min}) = \cl( R_{\min} \setminus \{r\})	\subseteq \cl(R\setminus \{r_1,\dots,r_i,r\})$;
	a contradiction. 
	Therefore, $R_{\min}$ is minimal and we can apply Theorem \ref{thm:min=MIN}
	to conclude that  $R_{\min}$ is of minimum size.

	Concerning the time complexity, observe that the for-loop runs $|R|$ times. 
	In each step of the for-loop, we have to check for consistency which can be done with 
	\texttt{BUILD} in $\mc{O}(|R||L_R|)$ time. Thus, we end in an overall
	time complexity $\mc{O}(|R|^2|L_R|)$.
\end{proof}

\begin{algorithm}[tbp]
\caption{\texttt{GREEDY for Minimal/Minimum Representative Triple Sets}}
\label{alg:greedy}
\begin{algorithmic}[1]
  	\Require Consistent triple set $R$; 
    \Ensure  Minimal Representative Triple set $R_{\min}$;  
	\State 	$R_{\tmp}\gets \emptyset$;
	\For{all $\rt{ab|c}\in R$}
		\State $R'\gets R\setminus R_{\tmp}$;  \label{alg:1}
    	\If{$R'\setminus \{\rt{ab|c}\} \cup \{\rt{bc|a}\}$ and $R'\setminus \{\rt{ab|c}\} \cup \{\rt{ac|b} \}$ are not consistent}   
		 \Comment{Thus, $\rt{ab|c}\in \cl(R\setminus (R_{\tmp}\cup\{\rt{ab|c}\}))$}    
			\State $R_{\tmp} \gets R_{\tmp} \cup \{\rt{ab|c}\}$;
		\EndIf
	\EndFor
	\State \Return $R_{\min}\gets R\setminus R_{\tmp}$;
\end{algorithmic}
\end{algorithm}

As a consequence, we obtain the following result:
\begin{theorem}
Let $R_1, R_2$ be consistent triple sets such that $\cl(R_1)=\cl(R_2)$.  
For each $R'_1\in \minimal(\SC(R_1))$ and  $R'_2 \in \minimal(\SC(R_2))$
it holds that $|R'_1| = |R'_2|$.
\label{thm:min=MIN2}
\end{theorem}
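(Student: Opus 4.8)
The plan is to reduce everything to the matroid machinery already in place. The key observation is that by Theorem~\ref{thm:equals} and Theorem~\ref{thm:repres}, the quantity $\Lmax{\cdot}{\cdot}$ is an invariant of the closure: since $\cl(R_1)=\cl(R_2)$, we have $\Lmax{R_1}{R_1}=\Lmax{\cl(R_1)}{\cl(R_1)}=\Lmax{\cl(R_2)}{\cl(R_2)}=\Lmax{R_2}{R_2}$. However, the triple sets $R_1$ and $R_2$ themselves may be genuinely different (not even subsets of one another), so we cannot directly invoke Theorem~\ref{thm:min=MIN} applied to a single matroid $(R,\mathbb F_R)$. Instead, I would work inside the common closure.

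First I would apply the theory to the triple set $R\coloneqq \cl(R_1)=\cl(R_2)$ itself, which is consistent by the closure properties. Note $R_1\subseteq R$ and $R_2\subseteq R$, since $R_i\subseteq \cl(R_i)=R$. Moreover $R_1\in\SC(R)$ and $R_2\in\SC(R)$, because $\cl(R_1)=R$ and $\cl(R_2)=R$, while $\cl(R)=\cl(\cl(R_1))=\cl(R_1)=R$. The next step is to observe that $\minimal(\SC(R_1))\subseteq \minimal(\SC(R))$: if $R'_1\in\minimal(\SC(R_1))$, then $\cl(R'_1)=\cl(R_1)=\cl(R)$ so $R'_1\in\SC(R)$, and minimality of $R'_1$ with respect to inclusion is an intrinsic property that does not depend on whether we call the ambient set $R_1$ or $R$ (formally: $\cl(R'_1\setminus\{r\})\neq\cl(R'_1)$ for all $r\in R'_1$ by Lemma~\ref{lem:sc-check}, and this is exactly the criterion of Lemma~\ref{lem:sc-check} applied with ambient set $R$). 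Symmetrically, $\minimal(\SC(R_2))\subseteq\minimal(\SC(R))$.

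Now I would invoke Theorem~\ref{thm:matroid}: $(R,\mathbb F_R)$ is a matroid whose collection of bases is $\minimal(\SC(R))$. Since all bases of a matroid have the same cardinality, and since $R'_1,R'_2\in\minimal(\SC(R))$ by the inclusions established above, we conclude $|R'_1|=|R'_2|$. This finishes the proof.

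The step I expect to need the most care is the claim $\minimal(\SC(R_i))\subseteq\minimal(\SC(R))$ — specifically, checking that minimality ``w.r.t.\ inclusion'' really is ambient-independent. The cleanest way to nail this down is via Lemma~\ref{lem:sc-check}: for $R'\in\SC(R_i)$ (equivalently $R'\in\SC(R)$), membership in $\minimal(\SC(R_i))$ is equivalent to $\cl(R'\setminus\{r\})\neq\cl(R')$ for every $r\in R'$, and this condition makes no reference to $R_i$ at all; it is a statement purely about $R'$. Hence it is equivalent to membership in $\minimal(\SC(R))$. Everything else is a direct citation of earlier results.
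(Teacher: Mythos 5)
Your proof is correct, and it takes a noticeably more direct route than the paper's. The paper proves this statement by running the greedy procedure (Algorithm~\ref{alg:greedy}) on $R=\cl(R_1)$ with a carefully chosen order of removals so that the intermediate set passes through $R_1$; this produces one particular set $R'\subseteq R_1$ lying in $\minimal(\SC(\cl(R_1)))$, which is then shown to lie in $\minimal(\SC(R_1))$ as well, and the same is done for $R_2$; the conclusion follows by chaining Theorem~\ref{thm:min=MIN} three times (applied to $R_1$, to $R_2$, and implicitly to the common closure). You instead prove the cleaner structural fact that $\minimal(\SC(R_i))\subseteq\minimal(\SC(R))$ for $R=\cl(R_1)=\cl(R_2)$, using that minimality is an intrinsic property of $R'$ once $R'\in\SC(R)$ --- which you correctly pin down via the criterion of Lemma~\ref{lem:sc-check} ($\cl(R'\setminus\{r\})\neq\cl(R')$ for all $r\in R'$), a condition that never mentions the ambient set; alternatively one can argue directly that any witness $R''\subsetneq R'_1$ with $\cl(R'')=\cl(R)$ would also witness non-minimality in $\SC(R_1)$ since $R''\subseteq R_1$. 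After that, a single application of the matroid property of $(R,\mathbb{F}_R)$ (equivalently Theorem~\ref{thm:min=MIN} for $R$) finishes the argument. What your approach buys is the avoidance of the algorithmic detour and of the slightly delicate claim that the greedy algorithm can be steered through $R_1$; what the paper's approach buys is that it reuses already-established algorithmic machinery and only needs the inclusion for the specific greedily constructed sets. Your opening remark about the invariance of $\Lmax{\cdot}{\cdot}$ is not needed for the argument, but it does no harm.
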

\begin{proof}
	Let $R_1$ and $R_2$ be consistent triple sets such that $\cl(R_1)=\cl(R_2)$. 
	Set $R  = \cl(R_1)$ and apply the greedy method with input $R$. 
	Since $\cl(R_1) = \cl(R)$ and since the choice of the triples assigned to 
	$R_{\tmp}$ is arbitrary as long as $\cl(R\setminus R_{\tmp}) = \cl(R)$, it is possible
	to obtain greedily a set  $R_{\tmp}$ for which
	$R'=R_1= R\setminus R_{\tmp}\subseteq R$ (in Step \ref{alg:1} of Alg.\ \ref{alg:greedy}). 
	Now Alg.\ \ref{alg:greedy} continues with $R_1$ in order to find a subset $R'\subseteq R_1$
	such that $R'\in \minimal(\SC(R)) =\minimal(\SC(\cl(R_1)))$. 
	Note, $R'\in \minimal(\SC(R_1))$ as otherwise there would be a subset
	$R''\subsetneq R'\subseteq R_1\subseteq \cl(R_1)$ such that
	$\cl(R'') = \cl(R_1)$; a contradiction to $R'\in \minimal(\SC(\cl(R_1)))$ and the correctness of Alg.\ \ref{alg:greedy}.
	Hence, 	for any $R'_1\in \minimal(\SC(\cl(R_1)))$ with $R'_1\subseteq R_1$ we also have
	$R'_1\in \minimal(\SC(R_1))$. The same applies to $R_2$, that is, $R'_2\in \minimal(\SC(R_2))$
	for any $R'_2\in \minimal(\SC(\cl(R_2)))$ with $R'_2\subseteq R_2$. 
	Since $\cl(R_1)=\cl(R_2)$, it holds that $\minimal(\SC(\cl(R_1))) = \minimal(\SC(\cl(R_2)))$.
	The latter together with Theorem \ref{thm:min=MIN} implies that $|R'_1| = |R'_2|$. 
\end{proof}

\section{Computing the Closure}
\label{sec:cl}

The currently fastest algorithm to determine the closure has a time complexity of
$\mc{O}(|R||L_R|^4)$ and was proposed by Bryant and Steel \cite{BS:95}. 
In this section, we provide a novel and efficient algorithm to compute the closure.
This method  is based on the techniques we used to prove the matroid structure. 
In particular, the proposed algorithm will rely on computing the set $\Lmax{R}{R}$ and usage of the following theorem.

\begin{theorem}
	Let $R$ be a consistent triple set and define $\R_{A,B} = \{\rt{ab|c} \colon a,b\in A, c\in B \text{ or } a,b\in B, c\in A \}$
	for any $A,B\subseteq L_R$. Then, 
	\[\cl(R) = \bigcup_{\{A,B\}\in \Lmax{R}{R}} \R_{A,B}.\,\]
	Moreover, for any distinct $\{A,B\},\{A',B'\} \in \Lmax{R}{R}$
	it holds that 	$\R_{A,B} \cap \R_{A',B'} = \emptyset$.
	In particular, \[\sum_{\{A,B\}\in \Lmax{R}{R}} |R_{A,B}| \leq |L_R|^3.\,\]
	\label{thm:cl-R}
\end{theorem}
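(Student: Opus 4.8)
The plan is to prove the three assertions in turn, relying heavily on Theorem~\ref{thm:2cc} (characterization of $\cl(R)$ via Ahographs with exactly two connected components) and on the structural results about $\Lmax{R}{R}$, especially Lemma~\ref{lem:max1}, Lemma~\ref{lem:intersection-subset} and Corollary~\ref{cor:Bempty}.

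\medskip

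\textbf{Step 1: $\bigcup_{\{A,B\}\in \Lmax{R}{R}} \R_{A,B} \subseteq \cl(R)$.}
Let $\{A,B\} = \lmax{\rt{ab|c}}{R} \in \Lmax{R}{R}$ for some $\rt{ab|c}\in R$, so by definition $[R,A\cup B]$ has exactly two connected components, $A$ and $B$. Take any $\rt{xy|z}\in \R_{A,B}$; w.l.o.g.\ $x,y\in A$ and $z\in B$. Then $[R,A\cup B]$ still has exactly two connected components, one ($A$) containing $x,y$ and the other ($B$) containing $z$, so Theorem~\ref{thm:2cc} immediately gives $\rt{xy|z}\in \cl(R)$.

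\medskip

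\textbf{Step 2: $\cl(R) \subseteq \bigcup_{\{A,B\}\in \Lmax{R}{R}} \R_{A,B}$.}
Let $\rt{xy|z}\in \cl(R)$. By Lemma~\ref{lem:nonempty} the set $\LrR{\rt{xy|z}}{R}$ is nonempty, so the maximal element $\lmax{\rt{xy|z}}{R} = \{A,B\}$ exists and lies in $\Lmax{\cl(R)}{R}$; by Theorem~\ref{thm:equals}, $\Lmax{\cl(R)}{R} = \Lmax{R}{R}$, so $\{A,B\}\in \Lmax{R}{R}$. Since $\{A,B\}\in \LrR{\rt{xy|z}}{R}$, one of $A,B$ contains $x,y$ and the other contains $z$; hence $\rt{xy|z}\in \R_{A,B}$. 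This is the step where Theorem~\ref{thm:equals} does the real work — it guarantees that the maximal element attached to a closure triple is genuinely an element of $\Lmax{R}{R}$ (not merely of $\Lmax{\cl(R)}{R}$).

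\medskip

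\textbf{Step 3: Disjointness.}
Suppose, for contradiction, that $\rt{xy|z}\in \R_{A,B}\cap \R_{A',B'}$ for distinct $\{A,B\},\{A',B'\}\in \Lmax{R}{R}$. W.l.o.g.\ $x,y$ lie in one part of each pair and $z$ in the other; say $x,y\in A$, $z\in B$, and (relabelling) $x,y\in A'$, $z\in B'$. Then $A\cap A'\ni x \neq \emptyset$ and $B\cap B'\ni z\neq \emptyset$ — but this directly contradicts Corollary~\ref{cor:Bempty}, which says that for distinct elements of $\Lmax{R}{R}$, $A\cap A'\neq\emptyset$ forces $B\cap B'=\emptyset$. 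Hence the $\R_{A,B}$ are pairwise disjoint. Finally, since $\{\R_{A,B}\}_{\{A,B\}\in\Lmax{R}{R}}$ is a disjoint family whose union is $\cl(R) \subseteq \R(T)$ for any tree $T$ displaying $R$, and every triple of $\cl(R)$ uses three leaves of $L_R$, we get $\sum_{\{A,B\}\in\Lmax{R}{R}} |R_{A,B}| = |\cl(R)| \le \binom{|L_R|}{3}\cdot 3 \le |L_R|^3$, which is the stated bound (more crudely, $|\cl(R)|$ is at most the number of rooted triples on $L_R$).

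\medskip

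The only genuine obstacle is Step~2, and it is already resolved by the machinery developed before this theorem: the subtlety is that the ``two components'' witnessing $\rt{xy|z}\in\cl(R)$ via Theorem~\ref{thm:2cc} need not a priori be one of the canonical pairs in $\Lmax{R}{R}$, but Lemma~\ref{lem:max1} (uniqueness and maximality of $\lmax{\cdot}{R}$) together with Theorem~\ref{thm:equals} pins it down. Steps~1 and~3 are then short consequences of Theorem~\ref{thm:2cc} and Corollary~\ref{cor:Bempty}, respectively.
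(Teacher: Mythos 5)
Your proof is correct and follows essentially the same route as the paper: the inclusion $\bigcup \R_{A,B}\subseteq\cl(R)$ via Theorem~\ref{thm:2cc}, the reverse inclusion via Lemma~\ref{lem:nonempty} together with $\Lmax{\cl(R)}{R}=\Lmax{R}{R}$ from Theorem~\ref{thm:equals}, and the counting bound from disjointness. The only cosmetic difference is that you derive disjointness from Corollary~\ref{cor:Bempty} (legitimate, since that statement is label-independent), whereas the paper invokes Lemma~\ref{lem:intersection-subset} directly, which is the same argument one layer down.
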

\begin{proof}
	Theorem \ref{thm:2cc} immediately implies that 	
	$ \bigcup_{\{A,B\}\in \Lmax{R}{R}} \R_{A,B} \subseteq \cl(R)$. 
	Thus, it remains to show that 
		$ \cl(R) \subseteq \bigcup_{\{A,B\}\in \Lmax{R}{R}} \R_{A,B} $. 
	Let $\rt{ab|c}\in\cl(R)$. Lemma \ref{lem:nonempty} implies that 
	$\LrR{\rt{ab|c}}{R}\neq \emptyset$.
	Thus, there is also a maximal element $\lmax{\rt{ab|c}}{R}=\{A,B\} \in \Lmax{\cl(R)}{R}$. 
	 Theorem  \ref{thm:equals} implies that $\Lmax{\cl(R)}{R} = \Lmax{R}{R}$ and hence, 
	it particularly holds that $\{A,B\} \in \Lmax{R}{R}$ for which 
	$\rt{ab|c}\in \R_{A,B}$.
	 Therefore, 
	$\rt{ab|c}\in 
	  \bigcup_{\{A,B\}\in \Lmax{R}{R}} \R_{A,B}$ and thus, 
		$\cl(R) = \bigcup_{\{A,B\}\in \Lmax{R}{R}} \R_{A,B}$. 

	We continue by showing that for any distinct $\{A,B\},\{A',B'\} \in \Lmax{R}{R}$
	we have 	$\R_{A,B} \cap \R_{A',B'} = \emptyset$.
	Assume for contradiction that $\rt{ab|c}\in \R_{A,B} \cap \R_{A',B'}$
	for some distinct $\{A,B\},\{A',B'\} \in \Lmax{R}{R}$.
	Thus, $A\cap (A'\cup B')\neq \emptyset$ and $B\cap (A'\cup B')\neq \emptyset$, 
	as well as $A'\cap (A\cup B)\neq \emptyset$ and $B'\cap (A\cup B)\neq \emptyset$. 
	Lemma \ref{lem:intersection-subset} implies 
	that 	either	$A\cup B\subseteq A'$ or  $A\cup B\subseteq B'$
	and either $A'\cup B'\subseteq A$ or  $A'\cup B'\subseteq B$.
	W.l.o.g.\ assume  that $A'\cup B'\subseteq A$ and therefore, 
	$A', B'\subseteq A$. 
	If $A\cup B\subseteq A'$ (resp.\ $A\cup B\subseteq B'$), 
	then $B'\subseteq A'$ (resp.\ $A'\subseteq B'$); 
	a contradiction to the disjointedness of $A',B'$. 
	Hence, $\R_{A,B} \cap \R_{A',B'} = \emptyset$.
	
	Finally, since $\R_{A,B} \cap \R_{A',B'} = \emptyset$
	are disjoint for distinct $\{A,B\},\{A',B'\} \in \Lmax{R}{R}$, 
	the closure $ \cl(R)$ is the disjoint union 
	$\uplus_{\{A,B\}\in \Lmax{R}{R}} \R_{A,B} $ and therefore, 
	$|\cl(R)| =  \sum_{\{A,B\}\in \Lmax{R}{R}} |R_{A,B}|$. 
	Since $\cl(R)$ can have at most $|L_R|^3$ triples, that is, 
	one triple for each of three-element subsets of $L_R$, 
	the assertion follows. 
\end{proof}

\begin{lemma}
	Let $R$ be a consistent triple set and $\rt{ab|c}\in R$. 
	Moreover, assume that there is a subset $\LT\subseteq L_R$
	with $a,b,c\in \LT$ such that $a, b$ are contained together 
	in some connected component $\mc{C}_{a,b}$ of $[R,\LT]$. 
	Let $\mc{C}_c$ denote the connected component in $[R,\LT]$
	that contains $c$. Note, we don't claim that 
	$\mc{C}_{a,b}\neq \mc{C}_c$. 
	
	Then, $\LT'\subseteq \mc{C}_{a,b}\cup \mc{C}_c$ for all $\LT'\subseteq \LT$
	for which $[R,\LT']$ has exactly two connected components, one containing
	$a,b$ and the other $c$. 
	\label{lem:abc}
\end{lemma}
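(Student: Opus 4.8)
The plan is to exploit that $[R,\LT']$ is a subgraph of $[R,\LT]$ whenever $\LT'\subseteq\LT$ (Lemma \ref{lem:subgraph}), together with the elementary observation that a connected subgraph of a graph is always contained in a single connected component of that graph. First I would fix an arbitrary $\LT'\subseteq\LT$ such that $[R,\LT']$ has exactly two connected components, say $A'$ (containing $a,b$) and $B'$ (containing $c$); since these are all the components, $\LT'=A'\cup B'$, and it suffices to show $A'\subseteq\mc{C}_{a,b}$ and $B'\subseteq\mc{C}_c$.

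For the first inclusion: because $A'$ is a connected component of $[R,\LT']$, the induced subgraph $\langle A'\rangle_{[R,\LT']}$ is connected, and by Lemma \ref{lem:subgraph} it is a subgraph of $\langle A'\rangle_{[R,\LT]}$ on the very same vertex set $A'$; hence $\langle A'\rangle_{[R,\LT]}$ is connected as well. Therefore all vertices of $A'$ lie in one connected component of $[R,\LT]$, and since $a\in A'\cap\mc{C}_{a,b}$ this component must be $\mc{C}_{a,b}$, giving $A'\subseteq\mc{C}_{a,b}$. Running the identical argument with $B'$ in place of $A'$, and using $c\in B'\cap\mc{C}_c$, yields $B'\subseteq\mc{C}_c$. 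Combining the two inclusions gives $\LT'=A'\cup B'\subseteq\mc{C}_{a,b}\cup\mc{C}_c$, which is the assertion.

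There is essentially no hard step here; the only point that needs care is the routine fact that enlarging the ambient vertex set of an Ahograph can only merge connected components, never split them, so that connectedness of $\langle A'\rangle_{[R,\LT']}$ transfers to $\langle A'\rangle_{[R,\LT]}$ (this is exactly the content of Lemma \ref{lem:subgraph} applied to induced subgraphs). I would also remark, for context, that the hypothesis $\rt{ab|c}\in R$ with $a,b,c\in\LT$ already forces the edge $(a,b)$ into $[R,\LT]$, so the stated assumption that $a$ and $b$ lie together in a component $\mc{C}_{a,b}$ of $[R,\LT]$ is automatic and merely fixes the notation $\mc{C}_{a,b},\mc{C}_c$ used in the conclusion.
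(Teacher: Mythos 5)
Your proof is correct and follows essentially the same route as the paper's: both rest on Lemma \ref{lem:subgraph} ($[R,\LT']$ is a subgraph of $[R,\LT]$) so that connectivity within the components of $[R,\LT']$ persists in $[R,\LT]$, forcing them into $\mc{C}_{a,b}$ and $\mc{C}_c$; the paper merely phrases this as a contradiction via a hypothetical vertex $d\notin\mc{C}_{a,b}\cup\mc{C}_c$, while you argue the two inclusions directly.
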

\begin{proof}
	Assume for contradiction that there is a subset $\LT'=A\cup B\subseteq \LT$
	such that  $[R,\LT']$ has exactly two connected components $A$ and $B$ with
	$a,b\in A$ and $c\in B$, but $\LT'\not\subseteq \mc{C}_{a,b}\cup \mc{C}_c$.
	Thus, there is a vertex $d\in \LT'\setminus (\mc{C}_{a,b}\cup \mc{C}_c)$. 
	Therefore, $d$ is either contained in $A$ or in $B$, that is, 
	there is either a path $P_{da}$ or $P_{dc}$ in 
	$[R,\LT']$. Since $[R,\LT']$ is a subgraph of $[R,\LT]$
	these paths are also contained in $[R,\LT]$. 
	But then, $d\in \mc{C}_{a,b}$ or $d\in\mc{C}_c$; a contradiction. 
\end{proof}

\begin{algorithm}[tbp]
\caption{\texttt{Compute $\Lmax{R}{R}$}}
\label{alg:lmax}
\begin{algorithmic}[1]
  	\Require A consistent triple set $R$; 
    \Ensure $\Lmax{R}{R}$;  
	\State $\Lmax{R}{R}\gets \emptyset$;  	
	\For{all $\rt{ab|c}\in R$}
		\State $\mc{C}	\gets L_{R}$;
		\While{$[R,\mc{C}]$ does not have exactly two connected components $A,B$, one containing $a,b$ and the other $c$}
		\State	$\mc{C}_{a,b}\gets$ connected component in 	$[R,\mc{C}]$ that contains $a,b$;
		\State	$\mc{C}_{c}\gets$ connected component in 	$[R,\mc{C}]$ that contains $c$;
		\State	$\mc{C}\gets \mc{C}_{a,b} \cup \mc{C}_c$;
		\EndWhile 
		\State $\Lmax{R}{R}\gets \Lmax{R}{R}\cup \{A,B\}$;
	\EndFor
	\State \Return $\Lmax{R}{R} \gets \Lmax{R}{R} $;
\end{algorithmic}
\end{algorithm}

\begin{algorithm}[tbp]
\caption{\texttt{Compute Closure  $\cl(R)$}}
\label{alg:closure}
\begin{algorithmic}[1]
  	\Require A consistent triple set $R$; 
    \Ensure $\cl(R)$;  
	\State Compute $\Lmax{R}{R}$ with Algorithm \ref{alg:lmax};
	\State $\cl(R)\gets \emptyset$;
	\For{all $\{A,B\}\in \Lmax{R}{R}$}
		\State Compute 	$\R_{A,B}$ (cf.\, Theorem \ref{thm:cl-R});
		\State $\cl(R) \gets \cl(R) \cup \R_{A,B}$; 
	\EndFor
	\State \Return $\cl(R)$;
\end{algorithmic}
\end{algorithm}

The latter lemma immediately offers a way  to compute 
$\Lmax{R}{R}$ that is summarized in Algorithm \ref{alg:lmax}. 
For each triple $\rt{ab|c}\in R$ start with $[R,L_R]$. 
If  $[R,L_R]$ has already  two connected components $A$ and $B$,
one containing $a,b$ and the other $c$, then $L_R = A\cup B$
clearly maximizes  $|A\cup B|$. Thus, $\{A, B\} = \lmax{\rt{ab|c}}{R}\in \Lmax{R}{R}$.
If  $[R,L_R]$ does not have these two connected components $A$ and $B$,
it is, however, still disconnected (cf.\, Theorem \ref{thm:ahograph}). 
Hence,  $[R,L_R]$ has two or more   connected components. Nevertheless, 
$a,b$ must be in one connected component $\mc{C}_{a,b}$ due to the edge $(a,b)$
implied by $\rt{ab|c}\in R$. Moreover, there is a connected component  $\mc{C}_{c}$
that contains $c$. Note, $\mc{C}_{a,b} = \mc{C}_c$ might be possible.
Now set $\mathcal C = \mc{C}_{a,b}\cup \mc{C}_c\subsetneq L_R$. 
By Lemma \ref{lem:abc} it holds $\LT'\subseteq \mc{C}$ for all $\LT'\subseteq L_R$
for which $[R,\LT']$ satisfies the conditions of Theorem \ref{thm:2cc} when applied
to $\rt{ab|c}\in R$. Hence, we stepwisely look at these components $\mc{C}$
until we have found one $\mc{C}$ such that for the particular subset 
$\mc{C}^* = \mc{C}_{a,b} \cup \mc{C}_c \subsetneq \mc{C}$, 
the Ahograph $[R,\mc{C}^*]$ has two connected components $A$ and $B$,
one containing $a,b$ and the other $c$. Hence, $\mc{C}^* = A\cup B$. 
Since this is the first occurrence of such a set $\mc{C}^*\subseteq L_R$
and any further $\LT'\subseteq \LT$
for which $[R,\LT']$ has exactly two connected components, one containing
$a,b$ and the other $c$, must be contained in $\mc{C}^*$, 
$\mc{C}^* = A\cup B$ maximizes  $|A\cup B|$. 
Thus, $\{A, B\} = \lmax{\rt{ab|c}}{R}\in \Lmax{R}{R}$.
By Theorem \ref{thm:2cc} and since  $\rt{ab|c}\in R\subseteq \cl(R)$, 
there is indeed a subset $[R,\LT']$ that satisfies the conditions of Theorem \ref{thm:2cc} when applied
to $\rt{ab|c}\in R$. 
The latter arguments show that Algorithm \ref{alg:lmax} is correct.

\begin{lemma}
	Let $R$ be a consistent triple set.
	Algorithm \ref{alg:lmax} computes $\Lmax{R}{R}$ 
	in $\mc{O}(|R|^2|L_R|)$ time. 
	\label{lem:alg:lmax}
\end{lemma}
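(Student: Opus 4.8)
The plan is to bound the running time of Algorithm \ref{alg:lmax} by analysing the outer \texttt{for}-loop and the inner \texttt{while}-loop separately. There are $|R|$ iterations of the outer loop, one per triple $\rt{ab|c}\in R$, so it suffices to show that the work done for a single triple is $\mc{O}(|R||L_R|)$. Inside one iteration, each pass through the \texttt{while}-loop does the following: build the Ahograph $[R,\mc{C}]$ (or at least compute its connected components), identify the component $\mc{C}_{a,b}$ containing $a$ and $b$ and the component $\mc{C}_c$ containing $c$, and then replace $\mc{C}$ by $\mc{C}_{a,b}\cup\mc{C}_c$. Constructing $[R,\mc{C}]$ and running a standard connected-components search on it costs $\mc{O}(|R|+|\mc{C}|) = \mc{O}(|R|)$ time, since the graph has at most $|\mc{C}|\le |L_R|\le\mc{O}(|R|)$ vertices and at most $|R|$ edges (each triple contributes at most one edge to any fixed Ahograph). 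Locating $\mc{C}_{a,b}$ and $\mc{C}_c$ among the components, and forming their union, is then an additional $\mc{O}(|L_R|)$ step. Hence each pass of the \texttt{while}-loop costs $\mc{O}(|R|)$ time (absorbing the $\mc{O}(|L_R|)\subseteq\mc{O}(|R|)$ term).

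The key remaining point is to bound the number of passes of the \texttt{while}-loop for a fixed triple. Here I would argue that the vertex set $\mc{C}$ strictly shrinks at every iteration that does not terminate the loop: if $[R,\mc{C}]$ does not already have exactly two connected components (one containing $a,b$, the other $c$), then by Theorem \ref{thm:ahograph} the graph $[R,\mc{C}]$ is still disconnected, so it has at least three components or has two components that are not of the required form; in either case $\mc{C}_{a,b}\cup\mc{C}_c$ omits at least one vertex of $\mc{C}$ (some vertex lies outside both $\mc{C}_{a,b}$ and $\mc{C}_c$, or the two required components coincide and a third component is dropped). Therefore $|\mc{C}|$ decreases by at least one per non-terminating pass, so the loop runs at most $|L_R|$ times before it halts. (Correctness, i.e.\ that it does halt with the right pair, is already established in the discussion preceding this lemma via Lemma \ref{lem:abc} and Theorem \ref{thm:2cc}.) Multiplying, each outer iteration costs $\mc{O}(|L_R|)\cdot\mc{O}(|R|) = \mc{O}(|R||L_R|)$, and over all $|R|$ triples we obtain the claimed $\mc{O}(|R|^2|L_R|)$ bound.

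The main obstacle I anticipate is making the per-pass cost estimate airtight, in particular justifying that recomputing the connected components of $[R,\mc{C}]$ from scratch really is $\mc{O}(|R|)$ rather than $\mc{O}(|R|\,|L_R|)$ — one must be careful that, although $R$ may contain up to $\Theta(|L_R|^3)$ triples, the number of \emph{edges} in a single Ahograph $[R,\mc{C}]$ is at most $\binom{|\mc{C}|}{2}$ and is also at most $|R|$, and that we can enumerate these edges by scanning each triple once in $\mc{O}(|R|)$ time. A secondary subtlety is the bookkeeping needed to locate $a$, $b$, $c$ within the component structure returned by the search; a union–find or component-label array makes this $\mc{O}(1)$ per lookup, so it is absorbed into the $\mc{O}(|R|)$ term. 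With these two points pinned down, the telescoping over passes and then over triples gives the stated complexity.
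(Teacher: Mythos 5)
Your proposal is correct and follows essentially the same route as the paper's proof: $|R|$ outer iterations, at most $|L_R|$ passes of the \texttt{while}-loop because $\mc{C}$ must lose at least one vertex per non-terminating pass (otherwise $[R,\mc{C}]$ would be connected or already of the required two-component form, contradicting Theorem~\ref{thm:ahograph} or the loop condition), and $\mc{O}(|L_R|+|R|)=\mc{O}(|R|)$ work per pass to build $[R,\mc{C}]$ and track the components of $a,b$ and $c$, with correctness delegated to Lemma~\ref{lem:abc} and the preceding discussion. Your treatment of the shrinkage argument and the per-pass edge count is, if anything, slightly more explicit than the paper's.
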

\begin{proof}
	The correctness of the Algorithms follows from Lemma \ref{lem:abc} and the discussion above. 
	
	The FOR-loop runs $\mc{O}(|R|)$ times.  
	The WHILE-condition is repeated at most $|L_R|$ times, since $\mc{C}$
	will  in each step have at least one vertex less as otherwise 
	$[R,\mc{C}]$ will be connected; a contradiction to Theorem \ref{thm:ahograph}. 
	For each call of the WHILE-condition we have to construct
	$[R,\mc{C}]$ and keep track of the connected components 
	$\mc{C}_{a,b}$ and $\mc{C}_{c}$. The latter task can be done 
	while constructing $[R,\mc{C}]$. Thus, both tasks take
	$\mc{O}(|L_R|+|R|)$ time. Since $|L_R|\leq 3|R|$, we have 
	$\mc{O}(|L_R|+|R|) = \mc{O}(|R|)$. 
	Thus, we end in an overall time complexity 
	$\mc{O}(|R|^2|L_R|))$.
\end{proof}

\begin{lemma}
	Algorithm \ref{alg:closure} computes the closure $\cl(R)$ of a consistent triple set $R$
	in $\mc{O}(|R|^2|L_R|)$ time.  
\end{lemma}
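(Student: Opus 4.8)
The plan is to obtain correctness immediately from Theorem~\ref{thm:cl-R} together with the correctness of Algorithm~\ref{alg:lmax}, and then to establish the time bound by charging the work separately to the computation of $\Lmax{R}{R}$ and to the subsequent FOR-loop.

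For correctness, I would first appeal to Lemma~\ref{lem:alg:lmax} so that the first step of Algorithm~\ref{alg:closure} indeed returns $\Lmax{R}{R}$. Theorem~\ref{thm:cl-R} then states that $\cl(R)=\bigcup_{\{A,B\}\in\Lmax{R}{R}}\R_{A,B}$, which is exactly the set accumulated by the FOR-loop; hence the returned set equals $\cl(R)$. This part is routine.

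For the running time, I would use Lemma~\ref{lem:alg:lmax} to bound the first step by $\mc{O}(|R|^2|L_R|)$, and note that the FOR-loop is executed $|\Lmax{R}{R}|\le |R|$ times. The key point is the cost of a single iteration: constructing $\R_{A,B}$ amounts to emitting $\rt{ab|c}$ for all pairs $\{a,b\}$ and single elements $c$ drawn from the two sides $A,B$, which takes time $\mc{O}(|A\cup B|+|\R_{A,B}|)$, i.e.\ proportional to the number of triples produced rather than to $|A\cup B|^3$, and appending $\R_{A,B}$ to the closure is another $\mc{O}(|\R_{A,B}|)$. Summing over the loop and invoking the pairwise disjointness of the $\R_{A,B}$ from Theorem~\ref{thm:cl-R}, one gets $\sum_{\{A,B\}}|\R_{A,B}|=|\cl(R)|\le |L_R|^3$ and $\sum_{\{A,B\}}|A\cup B|\le |R|\,|L_R|$, so the FOR-loop costs $\mc{O}(|L_R|^3+|R|\,|L_R|)$. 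Finally, since each triple contributes at most three leaves, $|L_R|\le 3|R|$, whence both $|L_R|^3$ and $|R|\,|L_R|$ are $\mc{O}(|R|^2|L_R|)$, and adding the cost of computing $\Lmax{R}{R}$ yields the overall bound $\mc{O}(|R|^2|L_R|)$.

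The only place that requires care — the main obstacle — is precisely this per-iteration accounting: one must resist bounding the construction of $\R_{A,B}$ by the crude $\mc{O}(|A\cup B|^3)$ and instead charge it to $|\R_{A,B}|$, then use the disjointness of the sets $\R_{A,B}$ to collapse $\sum_{\{A,B\}}|\R_{A,B}|$ to $|L_R|^3$ rather than to $|R|\cdot|L_R|^3$; everything else is bookkeeping.
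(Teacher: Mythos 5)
Your proposal is correct and follows essentially the same route as the paper: correctness via Lemma~\ref{lem:alg:lmax} and Theorem~\ref{thm:cl-R}, and the time bound via the pairwise disjointness of the sets $\R_{A,B}$ (so the FOR-loop is output-sensitive and bounded by $|\cl(R)|\le|L_R|^3$), combined with the $\mc{O}(|R|^2|L_R|)$ cost of Algorithm~\ref{alg:lmax} and $|L_R|\le 3|R|$. Your extra term $\sum_{\{A,B\}}|A\cup B|\le|R||L_R|$ is just slightly more careful bookkeeping than the paper's proof and does not change the argument.
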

\begin{proof}
	Given a consistent triple set $R$, the set $\Lmax{R}{R}$ is computed. 
	For each $\{A,B\}\in \Lmax{R}{R}$ the respective set $\R_{A,B}$ 
	is constructed and attached to $\cl(R)$. By Theorem  \ref{thm:cl-R}, 
	$\cl(R)$ is correctly computed. 
	
	Concerning the  time complexity, first observe  that Algorithm \ref{alg:lmax}
	runs in  $\mc{O}(|R|^2|L_R|)$ time.
	Furthermore, Theorem \ref{thm:cl-R} implies that 
	$\R_{A,B} \cap \R_{A',B'} = \emptyset$ for distinct $\{A,B\},\{A',B'\} \in \Lmax{R}{R}$. 
	That is, each triple of $\cl(R)$ is computed exactly once in the entire run of the FOR-loop.
	Since $\cl(R)$ can have at most $|L_R|^3$ triples, the FOR-loop has a time complexity 
	of $\mc{O}(|L_R|^3)$. 
	Thus, we end in an overall time complexity $\mc{O}(|R|^2|L_R| + |L_R|^3)$.
	Since $|L_R|\leq 3|R|$, we have therefore $\mc{O}(|R|^2|L_R| + |L_R|^3) = \mc{O}(|R|^2|L_R|)$.
\end{proof}

The overall time complexity to compute the closure for a given triple set
$R$ is $\mc{O}(|R|^2|L_R|)$. In a worst case, $|R|$ is close to
$\binom{L_R}{3} = \mc{O}(|L_R|^3)$, in which we end in $\mc{O}(|L_R|^7)$
time. In this case, the time complexity of our approach is the same as the
complexity $\mc{O}(|R||L_R|^4) = \mc{O}(|L_R|^7)$ of the method proposed by
Bryant and Steel \cite{BS:95}. In a best case, however, we have
$\mc{O}(|R|)=\mc{O}(|L_R|)$ and then we obtain $\mc{O}(|R|^2|L_R|)=
\mc{O}(|L_R|^3)$, while the method of Bryant and Steel has a time
complexity of $\mc{O}(|R||L_R|^4) = \mc{O}(|L_R|^5)$. Thus, although the
time complexities are asymptotically the same whenever $|R|$ is close to
$\binom{L_R}{3}$, our methods outperforms the approach of Bryant and Steel
\cite{BS:95} for moderately sized $R$. In particular, since $|L_R|\leq
3|R|$, our method has always time complexity $\mc{O}(|R|^2|L_R|) \subseteq
\mc{O}(|R|^3)$, while the method of Bryant and Steel has complexity
$\mc{O}(|R||L_R|^4) \subseteq \mc{O}(|R|^5)$. 

For the sake of time complexity one can apply Algorithm \ref{alg:lmax} and
\ref{alg:closure} directly on an arbitrary given set $R'\in
\minimum(\SC(R))$, since $\Lmax{R}{R} = \Lmax{R'}{R'}$ (cf.\ Thm.\
\ref{thm:repres}). Note, in many cases $R'\in \minimum(\SC(R))$ will have
cardinality strictly less than $|R|$. By way of example, consider the set
of all rooted triples $\mathcal{R}(T)$ that are displayed by a binary
rooted tree $T$. In this case, $\mathcal{R}(T)$ is closed and defines $T$
and for any $R' \in \min(\SC(\mathcal{R}(T)))$ we have $|R'| = |L_R|-2$
\cite{GSS:07,Steel1992}. Hence, for any subset $R\subseteq \mathcal{R}(T)$
that contains $R'$ the cardinality can vary between $|L_R|-2$ and
$\binom{L_R}{3} \in \mc{O}(|L_R|^3)$. Note, $\cl(\mathcal{R}(T)) =
\cl(R')\subseteq \cl(R) \subseteq \cl(\mathcal{R}(T)) = \mathcal{R}(T)$ and
thus, $\cl(R') = \cl(R) = \mathcal{R}(T)$ and $R' \in \min(\SC(R))$.
Therefore, for any such set $R\subseteq \mathcal{R}(T)$ there is a minimal
representative set $R'$ that can have cardinality significantly less
compared to $|R|$, while $R'$ still contains all information of the tree
structure $T$ that is also provided by $R$ and $\mathcal{R}(T)$. On the one
hand, this strongly reduces the space complexity to store the information
that is needed to recover $T$. On the other hand, the closure can be
computed in the latter case in $\mc{O}(|R'|^2|L_R|)=\mc{O}(|L_R|^3)$ time,
whenever $R'$ is given, which improves the time complexity
$\mc{O}(|R||L_R|^4)$ to compute $\cl(R)$ by a factor of $|R||L_R|$. A
similar argument applies to the set of all triples $\mathcal{R}(T)$ that
are displayed by a non-binary rooted tree $T$. In this case,
$\mathcal{R}(T)$ identifies $T$. Still, $\mathcal{R}(T)$ can be close to
$\binom{L_R}{3} \in \mc{O}(|L_R|^3)$, while for any $R' \in
\min(\SC(\mathcal{R}(T)))$ the cardinality is bounded by $B(T) \in
\mc{O}(|L_R|^2)$, cf.\ Cor.\ \ref{cor:idn}.

\section{Further Results}
\label{sec:fr}

\subsection{Sufficient Conditions for Minimum Representative Triple Sets}

We provide in this subsection easy verifiable conditions that 
are quite useful to identify triples in $R$ that must be contained in every
representative set of $R$ and to check whether 
$R$ is already a minimal representative of $R$. 
In particular, these conditions helped us 
to construct many (counter)examples when we wrote this paper.
For instance, the sets $R'_1$ and $R'_2$ in Figure \ref{fig:exmpl2}
are easily verified to be minimal representatives by using the following results.

\begin{lemma}
	Let $R$ be a consistent triple set and $\rt{ab|c}\in R$. 
	Furthermore, let 	$C_{a,b}$ (resp.\ $C_c$) be the connected component in $[R,L_R]$
	that contains $a$ and $b$ (resp.\ $c$). Note,  $C_{a,b} = C_c$ is possible. 
	Then, \[\rt{ab|c} \in \bigcap_{R'\in\minimal(\SC(R))} R'\] whenever the following two conditions are satisfied:
	\begin{enumerate}	
       \item[(S1)] $[R,L_R]$ does not contain cycles.
 			 \item[(S2)]  If $\rt{ab|d}\in R$ with $c\neq d$, then $d\notin C_{a,b}\cup C_c$.
	\end{enumerate}
	Moreover, it holds that
	\[\bigcap_{R'\in\minimal(\SC(R))} R'  = \bigcap_{R'\in \SC(R)} R'.\] 
\label{lem:suff1}
\end{lemma}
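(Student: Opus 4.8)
The plan is to prove the two conclusions of Lemma~\ref{lem:suff1} essentially in order, using the Ahograph $[R,L_R]$ directly. Fix a triple $\rt{ab|c}\in R$ satisfying (S1) and (S2), and let $C_{a,b}$ (resp.\ $C_c$) be the component of $[R,L_R]$ containing $a,b$ (resp.\ $c$). The first step is to understand the structure of the graph $[R, C_{a,b}\cup C_c]$: by (S1) this is a forest, and by Theorem~\ref{thm:ahograph} it must be disconnected, so $C_{a,b}$ and $C_c$ are still its connected components (possibly $C_{a,b}=C_c$, but then $\rt{ab|c}$ would fail to have two components, contradicting $\rt{ab|c}\in R\subseteq\cl(R)$ together with Theorem~\ref{thm:2cc} — so in fact $C_{a,b}\neq C_c$). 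Thus $\{C_{a,b},C_c\}\in\LrR{\rt{ab|c}}{R}$, and since $L_R=C_{a,b}\cup C_c\cup(\text{other components})$ and adding vertices from other components cannot merge the two, one checks that $\lmax{\rt{ab|c}}{R}=\{C_{a,b},C_c\}$.

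**Main step: $\rt{ab|c}$ lies in every minimal representative set.**
Let $R'\in\minimal(\SC(R))$. By Theorem~\ref{thm:repres}, $\lmax{\rt{ab|c}}{R'}=\lmax{\rt{ab|c}}{R}=\{C_{a,b},C_c\}$, so in particular $[R',C_{a,b}\cup C_c]$ has exactly the two components $C_{a,b},C_c$. Now the edge $(a,b)$ is present in $[R',C_{a,b}\cup C_c]$ iff some triple $\rt{ab|d}\in R'$ has $d\in C_{a,b}\cup C_c$; by (S2) the only such triple in $R$ (and hence in $R'$) is $\rt{ab|c}$ itself, with $c\in C_c$. So the question is whether $\rt{ab|c}\in R'$. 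Suppose not. Then the graph $[R'\setminus\{\rt{ab|c}\},C_{a,b}\cup C_c]=[R',C_{a,b}\cup C_c]$ (the removed triple contributes nothing anyway), and I want to contradict Lemma~\ref{lem:bridge}, which says every triple of $R'$ is a bridge in the Ahograph on its own $\lmax$. The cleaner route: since $\rt{ab|c}\in R\subseteq\cl(R)=\cl(R')$, Lemma~\ref{lem:identCl} would give $\cl(R'\setminus\{\rt{ab|c}\})=\cl(R')$ as soon as $\rt{ab|c}\notin R'$ anyway — that is automatic and not yet a contradiction. The real argument is: if $\rt{ab|c}\notin R'$, then no triple of $R'$ supplies the edge $(a,b)$ in $[R',C_{a,b}\cup C_c]$ (by (S2)), yet $a$ and $b$ both lie in the component $C_{a,b}$ of that graph, so there is a path $P_{ab}$ in $[R',C_{a,b}\cup C_c]$ avoiding the (absent) edge $(a,b)$. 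But $[R',C_{a,b}\cup C_c]$ is a subgraph of $[R,C_{a,b}\cup C_c]$ (Lemma~\ref{lem:subgraph}), which is a forest by (S1); adding the edge $(a,b)$ — which \emph{is} present in $[R,C_{a,b}\cup C_c]$ because $\rt{ab|c}\in R$ — together with the path $P_{ab}$ in $[R,C_{a,b}\cup C_c]$ creates a cycle, contradicting (S1). Hence $\rt{ab|c}\in R'$, and since $R'$ was arbitrary, $\rt{ab|c}\in\bigcap_{R'\in\minimal(\SC(R))}R'$.

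**Second conclusion: the two intersections coincide.**
One inclusion is trivial: $\minimal(\SC(R))\subseteq\SC(R)$ gives $\bigcap_{R'\in\SC(R)}R'\subseteq\bigcap_{R'\in\minimal(\SC(R))}R'$. For the reverse, take any $R'\in\SC(R)$; it contains some $R''\in\minimal(\SC(R))$ with $R''\subseteq R'$ (shrink $R'$ using Lemma~\ref{lem:identCl} repeatedly, as in the discussion before Lemma~\ref{lem:sc-check}). If $\rt{ab|c}\in\bigcap_{\widetilde R\in\minimal(\SC(R))}\widetilde R$, then in particular $\rt{ab|c}\in R''\subseteq R'$. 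Since $R'\in\SC(R)$ was arbitrary, $\rt{ab|c}\in\bigcap_{R'\in\SC(R)}R'$, giving the other inclusion and hence equality.

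**Expected main obstacle.**
The delicate point is the cycle argument in the main step: I must be careful that the path $P_{ab}$ in $[R',C_{a,b}\cup C_c]$ does not accidentally use the edge $(a,b)$ (it cannot, since by (S2) no triple of $R'$ other than $\rt{ab|c}$ — which we assumed absent — can create that edge inside $C_{a,b}\cup C_c$), and that both this path and the edge $(a,b)$ genuinely live in the \emph{same} ambient forest $[R,C_{a,b}\cup C_c]$ so that (S1) applies. An alternative, perhaps slicker, formulation uses Lemma~\ref{lem:bridge} directly: $\rt{ab|c}$, if it were \emph{in} $R'$, must be a bridge in $[R',C_{a,b}\cup C_c]$; if it is \emph{not} in $R'$, one shows $R'\setminus\{\rt{ab|c}\}=R'$ still has closure $\cl(R)$, so $R'$ could not have been minimal unless removing $\rt{ab|c}$ changed nothing — but the point is that (S1)+(S2) force $(a,b)$ to be a genuine bridge, so its supporting triple $\rt{ab|c}$ is indispensable. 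Either way the heart of the matter is converting the acyclicity hypothesis (S1) and the ``no parallel support'' hypothesis (S2) into the statement that the edge $(a,b)$ is a bridge that only $\rt{ab|c}$ can supply.
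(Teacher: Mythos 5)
Your argument for the second assertion (the equality of the two intersections) is correct and essentially the paper's. The main step, however, contains a genuine gap: it rests on the claims that $C_{a,b}\neq C_c$, that $[R,C_{a,b}\cup C_c]$ has exactly the two connected components $C_{a,b}$ and $C_c$, and hence (via Theorem \ref{thm:repres}) that $[R',C_{a,b}\cup C_c]$ has these same two components, so that $a$ and $b$ are joined by a path there. None of this follows from (S1) and (S2), and all of it can fail. Theorem \ref{thm:2cc} only guarantees that \emph{some} subset $\LT\subseteq L_R$ yields an Ahograph with two suitable components; it says nothing about $L_R$ or $C_{a,b}\cup C_c$, so your exclusion of $C_{a,b}=C_c$ is invalid -- the lemma explicitly allows it, and $R=\{\rt{ab|c},\rt{ac|d}\}$ satisfies (S1) and (S2) with $C_{a,b}=C_c=\{a,b,c\}$. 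Worse, shrinking the leaf set can delete edges whose only supporting triples $\rt{xy|z}$ have $z$ outside the chosen set, so components of $[R,L_R]$ need not survive in $[R,C_{a,b}\cup C_c]$: for the consistent set $R=\{\rt{ab|c},\rt{bx|z}\}$ (which satisfies (S1) and (S2), with $C_{a,b}=\{a,b,x\}$, $C_c=\{c\}$) the graph $[R,C_{a,b}\cup C_c]$ has the three components $\{a,b\},\{x\},\{c\}$, and $\lmax{\rt{ab|c}}{R}=\{\{a,b\},\{c\}\}\neq\{C_{a,b},C_c\}$. In both examples, once $\rt{ab|c}$ is (hypothetically) removed from $R'$ there is no $a$--$b$ path in $[R',C_{a,b}\cup C_c]$ at all, so the cycle contradiction you aim for never materializes and the main step proves nothing.

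The repair is exactly the paper's route, and it keeps the heart of your idea (acyclicity plus ``no parallel support'' forces the edge $(a,b)$ to be supplied by $\rt{ab|c}$ alone), but runs it inside the correct vertex set. Assume $\rt{ab|c}\notin R'$ for some $R'\in\minimal(\SC(R))$ and set $\{A,B\}=\lmax{\rt{ab|c}}{R'}$, which exists because $\rt{ab|c}\in R\subseteq\cl(R)=\cl(R')$. By the very definition of $\LrR{\rt{ab|c}}{R'}$, the vertices $a,b$ lie in one connected component of $[R',A\cup B]$, so a path $P_{ab}$ exists there; since $[R',A\cup B]$ is a subgraph of $[R,L_R]$ (Lemma \ref{lem:subgraph}), which contains the edge $(a,b)$ because $\rt{ab|c}\in R$, condition (S1) forces $P_{ab}$ to be the single edge $(a,b)$, hence supported by some $\rt{ab|d}\in R'$ with $d\in A\cup B$ and $d\neq c$. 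Lemma \ref{lem:abc} is then invoked only for the inclusion $A\cup B\subseteq C_{a,b}\cup C_c$ (not equality), which places $d$ inside $C_{a,b}\cup C_c$ and contradicts (S2). So replace your use of $C_{a,b}\cup C_c$ as the ambient vertex set by $A\cup B=\lmax{\rt{ab|c}}{R'}$ and the proof goes through; as written, the proposal's main step relies on a false intermediate claim.
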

\begin{proof}
	Assume that Conditions S1 and S2 are satisfied, but that there exists a triple set
	$R'\in\minimal(\SC(R))$ such that $\rt{ab|c}\notin R'$.
 	Let $\{A,B\} = \lmax{\rt{ab|c}}{R'}$ and assume  w.l.o.g.\ that $a,b\in A$ and $c\in B$. 
	Lemma \ref{lem:abc} implies that $A\cup B\subseteq C_{a,b}\cup C_c$. 
	Lemma  \ref{lem:subgraph} implies that $[R',A\cup B]$ is a subgraph of $[R,L_R]$.

	Since $a,b\in A$ there is a path $P_{ab}$ from $a$ to $b$ in $[R',A\cup B]$.
	Note, this path must be the edge $(a,b)$, otherwise $[R,L_R]$ would contain a cycle. 
	Thus, there must be 
	another triple $\rt{ab|d} \in R'$ with $c\neq d$ and $d\in A\cup B\subseteq C_{a,b}\cup C_c$
	that supports the edge $(a,b)$ in $[R',A\cup B]$; a contradiction to Condition S2. 

	To verify the last equation, 
	we set $M = \bigcap_{R'\in\minimal(\SC(R))} R'$ and $N = \bigcap_{R'\in \SC(R)} R'$. 
	Observe first that $\minimal(\SC(R)) \subseteq  \SC(R)$ implies that 
	$N\subseteq M$. Now assume that there is a triple $r\in R$ such $r\notin N$. 
	Thus, there is a triple set $R'\in \SC(R)$ with $r\notin R'$. 
	Therefore, $r\notin R''$ for all $R''\in \minimal(\SC(R'))$. 
	Since $R''$ is already minimal and $\cl(R'') = \cl(R') =\cl(R)$, we have $r\notin M$. 
	Hence, $M\subseteq N$ and thus, $M=N$.
\end{proof}

\begin{cor}
	Let $R$ be a consistent triple set. 
	Then, $\minimal(\SC(R))=\{R\}$ whenever Condition S1 and S2  in Lemma \ref{lem:suff1}
	are satisfied for	all triples in $R$.
\end{cor}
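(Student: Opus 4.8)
The plan is to derive the corollary directly from Lemma \ref{lem:suff1}. The hypothesis is that Conditions S1 and S2 hold for \emph{every} triple in $R$; I want to conclude $\minimal(\SC(R)) = \{R\}$, i.e.\ that $R$ is itself the unique minimal representative set of $R$. First I would apply Lemma \ref{lem:suff1} to each triple $\rt{ab|c} \in R$: since S1 and S2 are satisfied for all triples, the lemma yields $\rt{ab|c} \in \bigcap_{R' \in \minimal(\SC(R))} R'$ for every $\rt{ab|c} \in R$. Hence $R \subseteq \bigcap_{R' \in \minimal(\SC(R))} R'$. On the other hand, $\SC(R)$ consists of subsets of $R$, so every $R' \in \minimal(\SC(R))$ satisfies $R' \subseteq R$, and therefore $\bigcap_{R' \in \minimal(\SC(R))} R' \subseteq R$. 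Combining the two inclusions, $\bigcap_{R' \in \minimal(\SC(R))} R' = R$.

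Next I would note that $\minimal(\SC(R)) \neq \emptyset$: indeed $R \in \SC(R)$ trivially, and the greedy-type reduction argument (or simply well-foundedness of $\subsetneq$ on the finite collection $\SC(R)$) guarantees a minimal element exists. Pick any $R' \in \minimal(\SC(R))$. Then $R' \subseteq R$ and, by the intersection equality just established, $R \subseteq R'$. Hence $R' = R$. Since $R'$ was an arbitrary element of $\minimal(\SC(R))$, we conclude $\minimal(\SC(R)) = \{R\}$.

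There is essentially no obstacle here; the content is entirely in Lemma \ref{lem:suff1}, and the corollary is a one-line consequence once one observes that $\SC(R)$ only contains subsets of $R$. The only point worth spelling out carefully is the existence of at least one minimal representative set, so that the displayed intersection is not a vacuous intersection over the empty family (which would be all of $R$ by convention anyway, but it is cleaner to exhibit a concrete element). I would therefore write:

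\begin{proof}
	Suppose Conditions S1 and S2 of Lemma \ref{lem:suff1} are satisfied for all triples in $R$.
	Applying Lemma \ref{lem:suff1} to each triple $\rt{ab|c}\in R$ shows that
	$\rt{ab|c}\in \bigcap_{R'\in\minimal(\SC(R))} R'$, and since this holds for every triple in $R$,
	we obtain $R\subseteq \bigcap_{R'\in\minimal(\SC(R))} R'$.
	Conversely, every $R'\in\minimal(\SC(R))$ is by definition a subset of $R$, so
	$\bigcap_{R'\in\minimal(\SC(R))} R'\subseteq R$.
	Hence $\bigcap_{R'\in\minimal(\SC(R))} R' = R$.
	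Since $R\in\SC(R)$ and $\SC(R)$ is finite, $\minimal(\SC(R))\neq\emptyset$;
	let $R'\in\minimal(\SC(R))$ be arbitrary.
	Then $R'\subseteq R = \bigcap_{R''\in\minimal(\SC(R))} R''\subseteq R'$, so $R'=R$.
	Therefore $\minimal(\SC(R))=\{R\}$.
\end{proof}
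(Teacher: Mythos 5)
Your proof is correct and is exactly the intended argument: the paper states this corollary without proof as an immediate consequence of Lemma \ref{lem:suff1}, namely that every triple of $R$ lies in every minimal representative set while every element of $\minimal(\SC(R))$ is a subset of $R$. Your additional remark that $\minimal(\SC(R))\neq\emptyset$ (so the intersection is not vacuous) is a reasonable point of care and does not change the substance.
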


Note, the example in Figure \ref{fig:Wexmpl} shows that the Conditions S1 and S2 are
not necessary for minimal representatives.

\subsection{Triple Sets that Define and Identify a Tree}

Here, we are concerned with results established in  \cite{Steel1992} and \cite{GSS:07}.
First recall,  a triple set $R$  identifies a rooted tree $T$ with leaf set $L_R$, if
$T$ displays $R$  and any other tree that displays $R$ refines $T$.

 In  \cite{GSS:07} a tight lower bound for
the cardinality of triple sets that identify a rooted tree was given. To this end, let $c(v)$ denote the number of children
of a vertex $v$ in $T=(V,E)$ and set
\[B(T) = \sum_{(u,v)\in E^0} (c(u)-1)(c(v)-1).\,\]

\begin{theorem}[\cite{GSS:07}]
	Let $R$ be a consistent triple set. The following properties are satisfied:
	\begin{enumerate}
		\item $\cl(R) = \mathcal{R}(T)$ if and only if $R$ identifies $T$.
		\item If $R$ identifies $T$, then $|R|\geq B(T)$.
		\item For every rooted tree $T$, there is a triple set $R$
				such that $R$ identifies $T$ and $|R| = B(T)$.
	\end{enumerate}
	\label{thm:boundary}
\end{theorem}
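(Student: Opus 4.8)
\emph{Proof (plan).} The plan is to prove the three statements in order, reusing the closure machinery above (in particular Theorems~\ref{thm:2cc} and~\ref{thm:repres} and Lemma~\ref{lem:subgraph}) rather than arguing about trees by hand; throughout I write $\mathrm{lca}_T$ for least common ancestors in $T$ and $\mathrm{par}(q)$ for the parent of $q$. For part~(1), I would first record the elementary fact that for a tree $T'$ on leaf set $L$ and a set $A$ with $2\le|A|\le|L|-1$ one has $A\in\mc{C}(T')$ iff $\rt{xy|z}\in\R(T')$ for all $x,y\in A$ and all $z\in L\setminus A$: the ``only if'' is clear, and for ``if'' one puts $b=\mathrm{lca}_{T'}(A)$ and observes that if $A\subsetneq C_{T'}(b)$ then picking $x,y\in A$ in distinct child subtrees of $b$ and $z\in C_{T'}(b)\setminus A$ gives $\mathrm{lca}_{T'}(x,y)=\mathrm{lca}_{T'}(x,y,z)=b$, hence $\rt{xy|z}\notin\R(T')$, a contradiction. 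Granting this, (1) is pure bookkeeping: since $T\in\spa(\R(T))$, the set $\R(T)$ is closed; if $R$ identifies $T$ then $R\subseteq\R(T)$ gives $\cl(R)\subseteq\cl(\R(T))=\R(T)$, while every $T'\in\spa(R)$ refines $T$, hence displays $\R(T)$, so $\R(T)\subseteq\bigcap_{T'\in\spa(R)}\R(T')=\cl(R)$, i.e.\ $\cl(R)=\R(T)$; conversely, if $\cl(R)=\R(T)$ then $T$ displays $R$, and any $T'\in\spa(R)$ satisfies $\R(T)=\cl(R)\subseteq\cl(\R(T'))=\R(T')$, whence by the displayed fact $\mc{C}(T)\subseteq\mc{C}(T')$, so $T'$ refines $T$ and $R$ identifies $T$.

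For part~(2), I would compute $\Lmax{\R(T)}{\R(T)}$. The key observation is that for $\rt{xy|z}\in\R(T)$, with $u=\mathrm{lca}_T(x,y,z)$ and $v^{\uparrow},z^{\uparrow}$ the children of $u$ whose subtrees contain $\{x,y\}$ and $z$, one has $\lmax{\rt{xy|z}}{\R(T)}=\{C_T(v^{\uparrow}),C_T(z^{\uparrow})\}$: indeed $[\R(T),C_T(v^{\uparrow})\cup C_T(z^{\uparrow})]$ is a disjoint union of two cliques (two leaves of the same child subtree of $u$ are always adjacent, two leaves of distinct ones never, by an lca-check), and maximality holds because for any $\{A,B\}\in\LrR{\rt{xy|z}}{\R(T)}$ with $x,y\in A$, $z\in B$, Theorem~\ref{thm:2cc} forces $\rt{xa|z}\in\R(T)$ for all $a\in A$ and $\rt{zb|x}\in\R(T)$ for all $b\in B$, which pins $A\subseteq C_T(v^{\uparrow})$ and $B\subseteq C_T(z^{\uparrow})$. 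Consequently, for every ordered pair $(a,b)$ of distinct siblings of $T$ with $a$ inner, $\{C_T(a),C_T(b)\}\in\Lmax{\R(T)}{\R(T)}$. Now assume $R$ identifies $T$; by~(1) $\cl(R)=\R(T)$, so $R\in\SC(\R(T))$ and Theorem~\ref{thm:repres} yields $\Lmax{R}{R}=\Lmax{\R(T)}{\R(T)}$; hence for each such $(a,b)$ the Ahograph $[R,C_T(a)\cup C_T(b)]$ has exactly the two components $C_T(a),C_T(b)$, so $\langle C_T(a)\rangle$ is connected in it. An lca-check shows every edge of $[R,C_T(a)\cup C_T(b)]$ joining two distinct child subtrees of $a$ comes from a triple $\rt{xy|z}\in R$ with $x,y$ in distinct child subtrees of $a$ and $z\in C_T(b)$; connecting the $c(a)$ child subtrees of $a$ therefore requires at least $c(a)-1$ such triples, and a triple of this shape determines $(a,b)$ (via $a=\mathrm{lca}_T(x,y)$ and $b$ the unique sibling of $a$ with $z\in C_T(b)$), so these triple sets are pairwise disjoint over all pairs. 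Summing, $|R|\ge\sum_{(a,b)}(c(a)-1)=\sum_{v\text{ inner},\,v\neq\rho_T}(c(v)-1)\bigl(c(\mathrm{par}(v))-1\bigr)=B(T)$, using that $v$ has $c(\mathrm{par}(v))-1$ siblings and that $v\mapsto\{\mathrm{par}(v),v\}$ is a bijection from non-root inner vertices onto $E^0$.

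For part~(3), I would give an explicit construction. Fix a leaf $\ell(q)\in C_T(q)$ for each vertex $q$ and a distinguished child $\hat q$ of each inner vertex, and put into $R^{*}$, for every inner edge $\{u,v\}\in E^0$ with $u=\mathrm{par}(v)$, the triples $\rt{\ell(\hat v)\,\ell(w)\,|\,\ell(c)}$ over all children $w\neq\hat v$ of $v$ and all children $c\neq v$ of $u$. Each lies in $\R(T)$ by an lca-check, and since such a triple recovers its edge via $v=\mathrm{lca}_T(\ell(\hat v),\ell(w))$, the $(c(v)-1)(c(u)-1)$ triples belonging to distinct edges are pairwise distinct, so $|R^{*}|=B(T)$. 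By part~(1) it remains to show $\R(T)\subseteq\cl(R^{*})$, and by Theorem~\ref{thm:2cc} it suffices to prove that for every pair of distinct siblings $(a,b)$ with $a$ inner the graph $[R^{*},C_T(a)\cup C_T(b)]$ has exactly the two components $C_T(a),C_T(b)$ (then each $\rt{xy|z}\in\R(T)$ is caught by the split $\{C_T(v^{\uparrow}),C_T(z^{\uparrow})\}$ of part~(2)). No edge joins $C_T(a)$ to $C_T(b)$ (lca-check, using $R^{*}\subseteq\R(T)$), and connectedness of $\langle C_T(a)\rangle$ (and of $\langle C_T(b)\rangle$ when $b$ is inner) follows by induction on depth from the claim: \emph{for every inner vertex $q$ and every sibling $c$ of $q$, $\langle C_T(q)\rangle$ is connected in $[R^{*},C_T(q)\cup C_T(c)]$}. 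Indeed the inner edge $\{\mathrm{par}(q),q\}$ contributes, with sibling slot $c$, the edges $(\ell(\hat q),\ell(w))$ for all children $w$ of $q$; each $\langle C_T(w)\rangle$ is connected in the relevant subgraph by the induction hypothesis (with a sibling of $w$ taken among the children of $q$) and Lemma~\ref{lem:subgraph}; so $C_T(q)$ is connected through $\ell(\hat q)$. \textbf{I expect the main obstacle} to be precisely this last induction: keeping exact track of which triples of $R^{*}$ survive in the restricted Ahograph $[R^{*},C_T(a)\cup C_T(b)]$ and arranging the sibling slot so that the recursion closes. By comparison, the lca-checks, the $\Lmax$-computation of part~(2), and part~(1) (once the split-side fact is in place) are routine. \qed
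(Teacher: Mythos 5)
Your proposal is essentially correct, but note that the paper itself gives no proof of Theorem~\ref{thm:boundary}: it is imported verbatim from Gr\"unewald, Steel and Swenson \cite{GSS:07}, so there is no internal argument to compare against. What you have done is re-derive the cited result inside this paper's own Ahograph/closure framework, which is a genuinely different route from the original one. Part (1) via the cluster criterion ($A\in\mc{C}(T')$ iff every $\rt{xy|z}$ with $x,y\in A$, $z\notin A$ is displayed) is routine and fine. For part (2), instead of the counting argument of \cite{GSS:07}, you compute $\lmax{\rt{xy|z}}{\R(T)}=\{C_T(v^{\uparrow}),C_T(z^{\uparrow})\}$ and transfer this to $R$ via Theorem~\ref{thm:repres} (legitimate, since $\cl(R)=\R(T)$ and $R\subseteq\R(T)$ give $R\in\SC(\R(T))$); connectivity of $\langle C_T(a)\rangle$ in $[R,C_T(a)\cup C_T(b)]$ then forces at least $c(a)-1$ triples of $R$ whose cherry straddles distinct child subtrees of $a$ and whose outgroup lies in $C_T(b)$, and since such a triple determines the ordered pair $(a,b)$ (the cherry fixes $a=\mathrm{lca}_T(x,y)$, and it cannot lie in the disjoint set $C_T(b)$, so no triple is counted for two pairs), summing over ordered sibling pairs gives exactly $B(T)$ -- this is sound, and it is a nice payoff that the lower bound falls out of Theorems~\ref{thm:2cc} and~\ref{thm:repres}. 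For part (3) your set $R^{*}$ is the standard extremal construction, and the bottom-up induction you flag as the main obstacle does close as sketched: the star of edges $(\ell(\hat q),\ell(w))$ is present in $[R^{*},C_T(q)\cup C_T(c)]$ because the witness $\ell(c)$ lies in the vertex set, Lemma~\ref{lem:subgraph} imports connectivity of the inner child subtrees, and no edge crosses between sibling clusters since $R^{*}\subseteq\R(T)$. Two small caveats you should state explicitly: the degenerate case $E^{0}=\emptyset$ (a star tree), where $R^{*}=\emptyset$ and $L_{R^{*}}\neq L(T)$, is a boundary issue already implicit in the cited statement; and in (2) the disjointness of the triple ``shapes'' across ordered pairs deserves the one-line justification above, since the whole count rests on it.
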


These results allow us to give an exact value and  an upper bound  for the cardinality of minimal representative
triple sets that identify a tree. 
\begin{theorem}
Let $T$ be a tree with maximum degree $\Delta$.
Any minimal (and thus, minimum) consistent triple set $R$ that identifies $T$ has cardinality $B(T)\in 
\mc{O}(\Delta\cdot|L_R|)\subseteq \mc{O}(|L_R|^2)$.
\label{thm:cor:boundary}
\end{theorem}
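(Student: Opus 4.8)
The plan is to derive the exact value $B(T)$ by combining the bounds of Gr\"unewald, Steel and Swenson (Theorem \ref{thm:boundary}) with the matroid consequences obtained above (Theorems \ref{thm:min=MIN2} and \ref{thm:min=MIN}), and then to bound $B(T)$ by a short counting argument. First I would record the translation of the hypothesis into closure language. By Theorem \ref{thm:boundary}(1) a triple set identifies $T$ iff its closure equals $\mc{R}(T)$ (and then its leaf set is $L_R$). Hence a proper subset $R''\subsetneq R$ identifies $T$ iff $\cl(R'')=\mc{R}(T)=\cl(R)$, i.e.\ iff $R''\in\SC(R)$; so ``$R$ is minimal among the consistent triple sets identifying $T$'' is exactly the statement $R\in\minimal(\SC(R))$, together with $\cl(R)=\mc{R}(T)$.

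Next, invoking Theorem \ref{thm:boundary}(3) I would fix a consistent triple set $R_0$ with $\cl(R_0)=\mc{R}(T)$ and $|R_0|=B(T)$, and choose any $R_0'\in\minimal(\SC(R_0))$. Then $R_0'\subseteq R_0$ and $\cl(R_0')=\cl(R_0)=\mc{R}(T)$, so $R_0'$ identifies $T$ and Theorem \ref{thm:boundary}(2) gives $B(T)\leq|R_0'|\leq|R_0|=B(T)$, i.e.\ $|R_0'|=B(T)$. Since $R$ and $R_0$ are consistent with $\cl(R)=\cl(R_0)=\mc{R}(T)$, Theorem \ref{thm:min=MIN2} applied to $R\in\minimal(\SC(R))$ and $R_0'\in\minimal(\SC(R_0))$ yields $|R|=|R_0'|=B(T)$. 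The parenthetical ``and thus, minimum'' is then immediate from Theorem \ref{thm:min=MIN}, since $\minimal(\SC(R))=\minimum(\SC(R))$.

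It remains to estimate $B(T)$. I would orient every inner edge from parent to child; as the parent of an inner vertex is itself inner, $E^0=\{(\mathrm{par}(v),v)\colon v\in V^0\setminus\{\rho_T\}\}$. Using $c(u)-1\leq\Delta-1$ for every vertex $u$,
\[
B(T)=\sum_{(u,v)\in E^0}(c(u)-1)(c(v)-1)\ \leq\ (\Delta-1)\sum_{v\in V^0\setminus\{\rho_T\}}(c(v)-1)\ \leq\ (\Delta-1)\sum_{v\in V^0}(c(v)-1).
\]
Every non-root vertex is the child of exactly one (inner) vertex, so $\sum_{v\in V^0}c(v)=|V|-1$ and hence $\sum_{v\in V^0}(c(v)-1)=|V|-1-|V^0|=|L_R|-1$. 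Thus $B(T)\leq(\Delta-1)(|L_R|-1)\in\mc{O}(\Delta\cdot|L_R|)$, and since each of the $c(v)$ child-subtrees of a vertex contains a distinct leaf we get $c(v)\leq|L_R|$, hence $\Delta\leq|L_R|+1$ and $\mc{O}(\Delta\cdot|L_R|)\subseteq\mc{O}(|L_R|^2)$.

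I do not expect a genuine obstacle here; the argument is essentially bookkeeping. The two places that need care are (i) noting that ``$R$ minimal among triple sets identifying $T$'' is literally the statement $R\in\minimal(\SC(R))$, which is what makes Theorem \ref{thm:min=MIN2} applicable, and (ii) the handshake-type identity $\sum_{v\in V^0}(c(v)-1)=|L_R|-1$ underlying the bound on $B(T)$.
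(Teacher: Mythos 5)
Your proposal is correct and follows essentially the same route as the paper: translate minimality into membership in $\minimal(\SC(\cdot))$ via Theorem~\ref{thm:boundary}(1), combine Theorem~\ref{thm:boundary}(2,3) with the equal-cardinality consequence of the matroid structure to pin down $|R|=B(T)$, and then bound $B(T)$ by the same handshake-type count $\sum_{v\in V^0}(c(v)-1)=|L_R|-1$. The only cosmetic differences are that you pass through Theorem~\ref{thm:min=MIN2} (comparing $\SC(R)$ and $\SC(R_0)$) where the paper applies Theorem~\ref{thm:min=MIN} directly inside $\SC(\mathcal{R}(T))$, and your constants $(\Delta-1)(|L_R|-1)$ and $\Delta\leq|L_R|+1$ are marginally different from the paper's but yield the same asymptotics.
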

\begin{proof}
Let $R$ be a minimal consistent triple set $R$ that identifies $T$.
Theorem \ref{thm:boundary}(1) implies that $R\in \minimal(\SC(\mathcal{R}(T)))$.
Theorem \ref{thm:min=MIN} implies that $R$ has minimum cardinality. 
Combining Theorem \ref{thm:boundary}(2,3) and Theorem \ref{thm:min=MIN} implies that each $R\in \minimal(\SC(\mathcal{R}(T)))$
has cardinality $|R| = B(T)$. 

We continue to show that $|B(T)|\in \mc{O}(|L_R|^2)$. 
	We will use that $|V^0|\leq |L_R|-1$, cf.\ \cite[Lemma 1]{Hellmuth:15a}.
	Note $\Delta \leq |L_R|$. 
	Moreover, the notation for edges $(u,v)$ is chosen such that  
	$u$ is closer to the root than $v$. 
	\begin{align*}
		B(T) &= \sum_{(u,v)\in E^0} (c(u)-1)(c(v)-1)  
					\leq \sum_{(u,v)\in E^0} \deg(u)(c(v)-1)
					\leq \Delta \cdot \sum_{v\in V^0} (c(v)-1)\\
				&	=\Delta \cdot \Big(- |V^0| + \sum_{v\in V^0} c(v)\Big)
					=\Delta \cdot ( - |V^0| + |E|) \\
				&	=\Delta \cdot (- |V^0|+|V^0|+|L_R|-1 )
					=	\Delta \cdot (|L_R|-1) \in \mc{O}(|L_R|^2).
	\end{align*}	
\end{proof}

\begin{figure}[tbp]
  \begin{center}
    \includegraphics[width=.2\textwidth]{./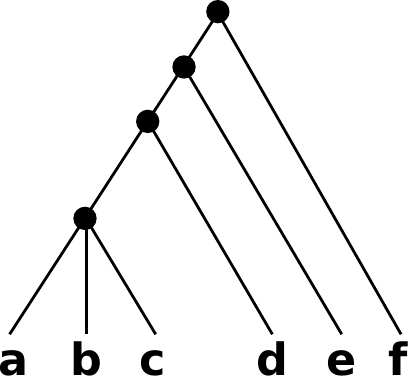}
  \end{center}
	\caption{
		Shown is a tree $T$ on the leaf set $\{a,b,c,d,e,f\}$. 
	 	Let $R_2 = \mathcal{R}(T)$ and  
		$R_1 = \{\rt{ab|d},\rt{ab|e},\rt{ab|f},\rt{bc|e},\rt{bc|f}\}$. 
		Although $R_1\subseteq R_2$, we can observe that for the minimal
	   representative sets $R'_1=R_1 \in\minimal(\SC(R_1))$ and $R'_2 = \{\rt{ab|d},\rt{bc|d},\rt{cd|e},\rt{de|f}\}\in \minimal(\SC(R_2))$
		it holds that $|R'_1|> |R'_2|$.
    }
	\label{fig:exmpl2}
\end{figure}

\begin{cor}
Let $R$ be a triple set that identifies the tree $T$. 
Then, for any $R'\in \minimal(\SC(R))$ we have $|R'| = B(T)\in \mc{O}(|L_R|^2)$. 
\label{cor:idn}
\end{cor}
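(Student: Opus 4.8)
The plan is to deduce this as an immediate consequence of Theorem~\ref{thm:cor:boundary}, which already determines the cardinality of any minimal consistent triple set identifying $T$; the only work is to verify that an arbitrary $R'\in\minimal(\SC(R))$ is exactly such a set. Since $R$ identifies $T$, it is in particular consistent, and by Theorem~\ref{thm:boundary}(1) we have $\cl(R)=\mathcal{R}(T)$.

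First I would show $R'$ identifies $T$. If $R'\in\minimal(\SC(R))$, then $R'\subseteq R$ (so $R'$ is consistent) and, by definition of $\SC(R)$, $\cl(R')=\cl(R)=\mathcal{R}(T)$. Applying Theorem~\ref{thm:boundary}(1) in the converse direction then yields that $R'$ identifies $T$.

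Next I would check that $R'$ is minimal among triple sets identifying $T$. Suppose for contradiction there were $R''\subsetneq R'$ identifying $T$; then $\cl(R'')=\mathcal{R}(T)=\cl(R)$ and, since $R''\subsetneq R'\subseteq R$, the set $R''$ would be an element of $\SC(R)$ strictly contained in $R'$, contradicting $R'\in\minimal(\SC(R))$. Hence $R'$ is a minimal consistent triple set identifying $T$, and Theorem~\ref{thm:cor:boundary} gives $|R'|=B(T)\in\mc{O}(|L_R|^2)$.

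I do not expect a genuine obstacle here: the argument is essentially bookkeeping, reconciling the two notions of ``minimality'' (minimal representative subset of $R$ versus minimal identifying set of $T$) and feeding the result into Theorem~\ref{thm:cor:boundary}, whose proof already incorporates Theorem~\ref{thm:boundary}(2,3) and Theorem~\ref{thm:min=MIN} to conclude that all such minimal sets share the cardinality $B(T)$.
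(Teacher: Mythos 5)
Your argument is correct, and it takes a slightly different route from the paper's. You apply Theorem~\ref{thm:cor:boundary} directly to $R'$: from $\cl(R')=\cl(R)=\mathcal{R}(T)$ and the ``if and only if'' in Theorem~\ref{thm:boundary}(1) you conclude that $R'$ identifies $T$, and you check that minimality of $R'$ in $\SC(R)$ forces minimality of $R'$ as an identifying set (any $R''\subsetneq R'$ identifying $T$ would satisfy $\cl(R'')=\mathcal{R}(T)=\cl(R)$ and $R''\subseteq R$, hence lie in $\SC(R)$). The paper instead compares $R'$ with some $R''\in\minimal(\SC(\mathcal{R}(T)))$, invokes Theorem~\ref{thm:min=MIN2} to get $|R'|=|R''|$, and only then applies Theorem~\ref{thm:cor:boundary} to $R''$. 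Your reduction therefore bypasses Theorem~\ref{thm:min=MIN2} (whose proof goes through the greedy algorithm), needing only Theorem~\ref{thm:boundary}(1) in both directions plus Theorem~\ref{thm:cor:boundary}; the matroid machinery still enters, but only implicitly through Theorem~\ref{thm:cor:boundary}'s use of Theorem~\ref{thm:min=MIN}. What the paper's detour buys is a demonstration of the general transfer principle that triple sets with equal closures have minimal representatives of equal size, whereas your version is the more economical bookkeeping argument. One small point you silently use (as does the paper) is that $\cl(R')=\cl(R)$ forces $L_{R'}=L_R$, so that ``$R'$ identifies $T$'' is meaningful with respect to the full leaf set; this is immediate since $R\subseteq\cl(R')$ and every triple in $\cl(R')$ has its leaves in $L_{R'}$, but it is worth a sentence.
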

\begin{proof}
Note, $\mathcal{R}(T)$ is closed and therefore, $\cl(\mathcal{R}(T)) = \mathcal{R}(T)$.
Theorem \ref{thm:boundary} implies that $\cl(R) = \mathcal{R}(T)$. 
Now, if $R'\in \minimal(\SC(R))$ and $R''\in \minimal(\SC(\mathcal{R}(T)))$, 
then $\cl(R') = \cl(R'')$. Hence, we can apply 
Theorem \ref{thm:cor:boundary} and \ref{thm:min=MIN2} to conclude that
$|R'|=|R''| = B(T)$.
\end{proof}

Note, for a rooted binary tree $T$ on $L_R$ and thus, $c(u)=2$ for each interior vertex, 
we obtain $B(T) =|L_R|-2$.  Moreover, $B(T) =|L_R|-2$ shows that $B(T)\in \mc{O}(|L_R|)$ is possible.
On the other hand, if $T$ is tree for which the root $\rho$ has $c(\rho)=n$ children 
and each child of $\rho$ is adjacent to exactly two leaves (and hence, $|E^0|=n$ and  $|L_R|=2n$), 
then we have
$B(T) = n(n-1)$ and therefore, indeed $B(T)\in  \Theta(|L_R|^2)$ and thus, $B(T)\not\in  \mc{O}(|L_R|)$ is possible.

We conjecture that for an arbitrary triple set $R$ and $R'\in \min(\SC(R))$ 
it always holds that  $R'$ is bounded above by $\mc{O}(|L_R|^2)$. However, a main difficulty in proving this is the following
fact: $R_1\subseteq R_2$ and $R'_i\in \min(\SC(R_i))$ with $i=1,2$ does not imply that  $|R'_1|\leq|R'_2|$; see Figure \ref{fig:exmpl2}.

Now consider triple sets $R$ that define a rooted tree $T$ with leaf set $L_R$, that is,  
$T$ is the unique tree (up to isomorphism) that displays $R$ and thus, $T$ must be binary and $\spa(R) = \{T\}$.
In \cite{Steel1992} it was shown that Theorem \ref{thm:min=MIN} is always satisfied for 
triple sets that define a tree. 
\begin{theorem}[{\cite[Cor.\ on Page 111]{Steel1992}}] 
If $R$ is a triple set that defines the rooted tree $T=(V,E)$ with leaf set $L$,
then $|R'|=|L|-2$ for any  $R'\in \minimal(\SC(R))$.
\label{thm:steel}
\end{theorem}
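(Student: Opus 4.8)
The plan is to reduce the statement to Corollary~\ref{cor:idn} together with a one-line evaluation of $B(T)$. First I would unpack the hypothesis. If $R$ defines $T$, then $T$ is (up to isomorphism) the only rooted tree with leaf set $L$ that displays $R$, so $\spa(R)=\{T\}$ and hence $\cl(R)=\bigcap_{T'\in\spa(R)}\mc{R}(T')=\mc{R}(T)$. Moreover $R$ identifies $T$: the defining property of ``identifies'' requires that $T$ displays $R$ and that every \emph{other} tree displaying $R$ refines $T$, and here there are no other such trees, so this holds vacuously (alternatively, $\cl(R)=\mc{R}(T)$ together with Theorem~\ref{thm:boundary}(1) gives the same conclusion). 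As already noted in the text preceding the theorem, the same hypothesis also forces $T$ to be binary: refining $T$ at an inner vertex with three or more children would yield a non-isomorphic tree $T'$ with $\mc{C}(T)\subseteq\mc{C}(T')$, hence $\mc{R}(T)\subseteq\mc{R}(T')$, so $T'$ would still display $R$, contradicting uniqueness.

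Next I would invoke Corollary~\ref{cor:idn} with this $R$ and $T$: since $R$ identifies $T$, every $R'\in\minimal(\SC(R))$ satisfies $|R'|=B(T)$. By Theorem~\ref{thm:min=MIN} the sets in $\minimal(\SC(R))$ and in $\minimum(\SC(R))$ coincide, so the conclusion reads the same for ``minimal'' and ``minimum''. It therefore only remains to compute $B(T)$ for the binary tree $T$. Since every inner vertex $u$ of $T$ has $c(u)=2$, each factor $(c(u)-1)(c(v)-1)$ over an inner edge $(u,v)\in E^0$ equals $1$, so $B(T)=|E^0|$. A binary rooted tree with $|L|$ leaves has $|L|-1$ inner vertices, hence $|V|=2|L|-1$ vertices and $|V|-1=2|L|-2$ edges, of which exactly $|L|$ are incident to a leaf; thus $|E^0|=(2|L|-2)-|L|=|L|-2$. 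Consequently $|R'|=B(T)=|L|-2$ for every $R'\in\minimal(\SC(R))$, which is the assertion. (This value of $B(T)$ is exactly the remark recorded after Theorem~\ref{thm:cor:boundary}.)

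I do not expect a genuine obstacle here: the substantive content --- that all minimal representative sets for a fixed closure have the same, minimum, cardinality --- is precisely the matroid machinery of Section~\ref{sec:min} (Theorems~\ref{thm:min=MIN} and~\ref{thm:min=MIN2}) distilled into Corollary~\ref{cor:idn}. The present statement only adds the bookkeeping that ``defines'' is the special case of ``identifies'' in which $T$ is binary, plus the elementary edge count collapsing $B(T)$ to $|L|-2$. The single point that deserves an explicit line is the implication ``$R$ defines $T$ $\Rightarrow$ $R$ identifies $T$ and $T$ is binary'', which is asserted but not proved in the text immediately preceding the theorem.
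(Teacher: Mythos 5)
Your proposal is correct and follows essentially the same route as the paper, which derives Theorem~\ref{thm:steel} as an immediate consequence of Theorem~\ref{thm:min=MIN} and Corollary~\ref{cor:idn}, using that ``defines'' implies ``identifies'' with $T$ binary and that $B(T)=|L|-2$ for binary $T$. Your added justifications (why $T$ must be binary and why ``defines'' implies ``identifies'') are sound and merely make explicit what the paper asserts in the surrounding text.
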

Note, every triple set $R$ that defines a tree $T$ also identifies $T$ and, 
by the discussion above and Corollary \ref{cor:idn}, we can conclude that every minimal triple
set $R$ that defines $T$ must have cardinality $|R| = B(T) =|L_R|-2$.
Thus, Theorem \ref{thm:steel} is an immediate consequence of 
Theorem \ref{thm:min=MIN} and Corollary \ref{cor:idn}.

A further interesting result is given by Semple \cite{SEMPLE03} and Gr{\"u}newald et at.\ \cite{GSS:07}:
\begin{lemma}
	For any subset $R$ of $\mathcal{R}(T)$, $\cl(R) = \mathcal{R}(T)$ if and only if $R$ identifies $T$.

	Furthermore, let $A_R$ denote the unique tree obtained from \texttt{BUILD} with input $R$. 
	For two sets $R_1$ and $R_2$ with $\cl(R_1)=\cl(R_2)$ we have $A_{R_1} = A_{R_2}$. 
	Moreover, if $R$ identifies $T$, then $A_R=T$. 
\label{lem:gruen}
\end{lemma}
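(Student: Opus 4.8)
The plan is to deduce the three assertions from two ingredients: the classical equivalence ``$\cl(R)=\mathcal{R}(T)\iff R$ identifies $T$'' (Theorem~\ref{thm:boundary}(1)) and the fact that the \texttt{BUILD}-tree depends only on the closure, i.e.
\[A_R=A_{\cl(R)}\quad\text{for every consistent }R.\]
Granting these, the first assertion is immediate: a subset $R\subseteq\mathcal{R}(T)$ is consistent since $T$ displays it, and if $\cl(R)=\mathcal{R}(T)$ then necessarily $L_R=L(T)$, so Theorem~\ref{thm:boundary}(1) applies verbatim. The second assertion follows too, because $\cl(R_1)=\cl(R_2)$ gives $A_{R_1}=A_{\cl(R_1)}=A_{\cl(R_2)}=A_{R_2}$. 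For the third assertion, $R$ identifies $T$ yields $\cl(R)=\mathcal{R}(T)$ by Theorem~\ref{thm:boundary}(1), hence $\cl(\mathcal{R}(T))=\cl(\cl(R))=\cl(R)$ by idempotency of $\cl$, so $A_R=A_{\mathcal{R}(T)}$; it then remains to observe $A_{\mathcal{R}(T)}=T$, which is the classical statement that \texttt{BUILD} reconstructs a tree from the set of all triples it displays (for every $\mathcal{L}\in\mathcal{C}(T)$ the connected components of $[\mathcal{R}(T),\mathcal{L}]$ are exactly the maximal proper subclusters of $\mathcal{L}$ in $T$, so the recursion of \texttt{BUILD} rebuilds $T$ top-down) \cite{Aho:81,Semple:book}.

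The heart of the argument is thus $A_R=A_{\cl(R)}$, which I would prove by induction along the recursion of \texttt{BUILD}. The crucial observation is that $A_R$ displays $R$, hence $A_R\in\spa(R)$, and therefore $A_R$ already displays every triple of $\cl(R)$. I claim that for each cluster $\mathcal{L}\in\mathcal{C}(A_R)$ the recursive call of \texttt{BUILD} on label set $\mathcal{L}$ using the triples of $\cl(R)$ returns precisely the subtree of $A_R$ rooted at the vertex $v$ with $C(v)=\mathcal{L}$; taking $\mathcal{L}=L_R$ then gives $A_{\cl(R)}=A_R$. The case $|\mathcal{L}|=1$ is trivial. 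If $|\mathcal{L}|>1$, the children of $v$ carry, by construction of \texttt{BUILD}, clusters $\mathcal{L}_1,\dots,\mathcal{L}_k$ ($k\ge 2$) equal to the connected components of $[R,\mathcal{L}]$. Since $[R,\mathcal{L}]$ is a subgraph of $[\cl(R),\mathcal{L}]$ (Lemma~\ref{lem:subgraph}), every component of $[\cl(R),\mathcal{L}]$ is a union of some $\mathcal{L}_i$'s, and it suffices to exclude merging. So let $(x,y)$ be an edge of $[\cl(R),\mathcal{L}]$, witnessed by a triple $\rt{xy|z}\in\cl(R)$ with $x,y,z\in\mathcal{L}=C(v)$. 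As $A_R$ displays $\rt{xy|z}$, there is a vertex $u$ of $A_R$ with $x,y\in C(u)$ and $z\notin C(u)$; since $x,y,z\in C(v)$ but $z\notin C(u)$, the vertex $u$ must be a proper descendant of $v$ and hence lies inside one child-subtree of $v$, so $x,y\in C(u)$ belong to the same $\mathcal{L}_i$. Consequently $[\cl(R),\mathcal{L}]$ has exactly the components $\mathcal{L}_1,\dots,\mathcal{L}_k$, and by the induction hypothesis the call of \texttt{BUILD} on $\mathcal{L}_i$ (with the triples of $\cl(R)$) returns the subtree of $A_R$ at the $\mathcal{L}_i$-vertex; joining these under a common root reproduces the subtree at $v$.

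I expect the main obstacle to be exactly this inductive step, because it is \emph{not} true that $[R,\mathcal{L}]$ and $[\cl(R),\mathcal{L}]$ have the same connected components for arbitrary $\mathcal{L}\subseteq L_R$ --- adding closure triples can genuinely merge components. The point is that one only needs agreement at the label sets $\mathcal{L}\in\mathcal{C}(A_R)$ actually visited by \texttt{BUILD}, and there the collapse is prevented by the fact that $A_R$ itself is a tree in $\spa(R)$: it already ``sees'' every closure triple, so no such triple can pull two leaves of $\mathcal{L}$ that sit in different child-subtrees of the $\mathcal{L}$-vertex of $A_R$ into one Ahograph component. Beyond this, the remaining steps are bookkeeping: checking $L_R=L(T)$ where needed, the easy direction that $\mathcal{R}(T)$ identifies $T$, and the standard unfolding of \texttt{BUILD} on $\mathcal{R}(T)$.
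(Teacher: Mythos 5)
Your proposal is correct, but there is no in-paper proof to measure it against: the paper states this lemma as a quoted result of Semple \cite{SEMPLE03} and Gr\"unewald et al.\ \cite{GSS:07} and gives no argument of its own, so what you have produced is a self-contained derivation within the paper's framework. The first assertion is, up to the harmless restriction to subsets of $\mathcal{R}(T)$, literally Theorem~\ref{thm:boundary}(1), which the paper also imports from \cite{GSS:07}; invoking it is legitimate here, and it means the genuinely new content of your proof is the \texttt{BUILD} part. Your key lemma $A_R=A_{\cl(R)}$ is argued correctly: the induction over the clusters of $A_R$ is well-founded because consistency forces at least two components at every recursion step (Theorem~\ref{thm:ahograph}), and the crucial non-merging step is sound --- since $A_R\in\spa(R)$ displays every triple $\rt{xy|z}\in\cl(R)$ with $x,y,z\in C(v)$, the cluster of the least common ancestor of $x$ and $y$ intersects $C(v)$ but omits $z$, hence by the hierarchy property it is nested strictly inside $C(v)$ and therefore inside a single child cluster, so no closure edge can join two components of $[R,C(v)]$; together with Lemma~\ref{lem:subgraph} this pins the components of $[\cl(R),C(v)]$ to be exactly those of $[R,C(v)]$. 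Two bookkeeping points that you flag but should be made explicit in a full write-up: $\cl(R_1)=\cl(R_2)$ forces $L_{R_1}=L_{R_2}$ (so the two \texttt{BUILD} runs start from the same label set), and in the first assertion the implication $\cl(R)=\mathcal{R}(T)\Rightarrow L_R=L(T)$ silently excludes degenerate cases such as $\mathcal{R}(T)=\emptyset$, an implicit assumption shared by the cited statements. The closing step $A_{\mathcal{R}(T)}=T$ is indeed the classical fact that the components of $[\mathcal{R}(T),C(v)]$ are precisely the child clusters of $v$, as you indicate.
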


The latter result left us 
with the question if the set $\Lmax{R}{R}$
can be used to obtain similar results. In particular, 
the question arises under which conditions  
$\Lmax{R}{R}$ provides the essential information of a hierarchy $\mc{C}(T)$ of $T$. 
Let 
\[\mathbb{C}(R) \coloneqq \bigcup_{\{A,B\} \in \Lmax{R}{R}} \{A,B\}
													\cup L_R \cup \Big\{\{x\} \colon x\in L_R\Big\}.\]

There are many examples that show $\mathbb{C}(R) = \mc{C}(T)$ for some tree $T$, in which case
$\mathbb{C}(R)$ provides already all information to re-build $T$.
As a simple example consider the set  $R_1=\{\rt{ab|c}\}$ where 
$\mathcal{C}(T) = \mathbb{C}(R_1) = \{\{a,b\},\{c\}\} \cup L_{R_1} \cup \{\{a\},\{b\},\{c\}\}$ 
provides the hierarchy of the unique tree $A_{R_1}=\rt{ab|c}$ that displays $R_1$. 
A further example is given in Figure \ref{fig:exmpl123}.
Contrary, for $R_2=\{\rt{ab|d}, \rt{bc|e}\}$ the tree obtained with \texttt{BUILD} is 
$A_{R_2} = ((a,b,c),d,e)$ (given in Newick format). However,  the set  $\mathbb{C}(R_2)$
does not contain the element  $\{a,b,c\}$. Moreover, $\mathbb{C}(R_2)$
contains the elements $\{a,b\}$ and $\{b,c\}$. Hence, $\mathbb{C}(R_2)$ is not a hierarchy.
The difference between $R_1$ and $R_2$ is simple: $R_1$ identifies $A_{R_1}$ and
$R_2$ does not identify $A_{R_2}$. The latter observation leads us to the following result.

\begin{figure}[tbp]
  \begin{center}
    \includegraphics[width=.4\textwidth]{./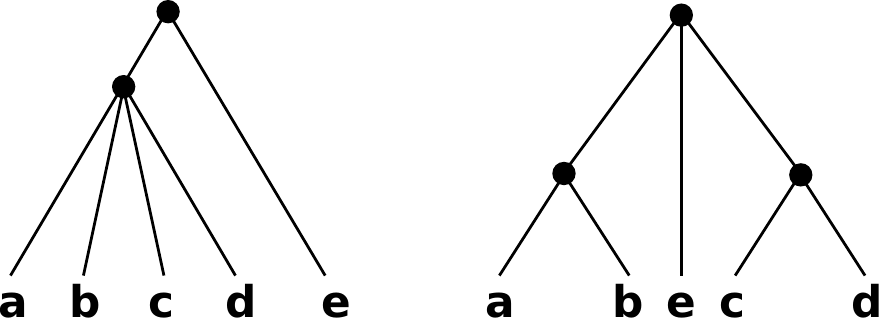}
  \end{center}
	\caption{
		Shown are two trees $T_1$ (left) and $T_2$ (right) that display
		$R = \{\rt{ab|e}, \rt{cd|e}\}$. None of the trees is a refinement of
			the other one. Hence, $R$ neither identifies $T_1$ nor $T_2$. 
		Nevertheless, $\mathbb{C}(R) = \mc{C}(T_2)$ and therefore, contains
		all information to uniquely recover $T_2$. This example also shows that 
 the converse of Statement (2) in Lemma \ref{lem:ident-cr} is not satisfied.
    }
	\label{fig:exmpl123}
\end{figure}

\begin{lemma}
Let $R$ be a consistent triple set and $T\in \spa(R)$. Two vertices $u,v\in V(T)$
form a \emph{pair of siblings $\{u,v\}$}, if $u$ and $v$ have a common adjacent vertex $w$
such that $\mc{C}(u),\mc{C}(v) \subsetneq \mc{C}(w)$ and $|\mc{C}(u)\cup\mc{C}(v)|>2$, i.e., 
$w$ is closer to the root than $u$ and $v$ and at least one of $u$ and $v$ is an inner vertex.
We denote with $\mc S(T)$ the set of all such pairs of siblings in $T$. 
The following statements are satisfied:
\begin{enumerate}
\item  $R$ identifies $T$ if and only if $\Lmax{R}{R} = \bigcup_{\{u,v\} \in \mc S(T)} \{\{\mc C(u),\mc C(v)\}\} $ 
\item  If $R$ identifies $T$, then $\mathbb{C}(R) = \mc{C}(T)$.
\end{enumerate}
\label{lem:ident-cr}
\end{lemma}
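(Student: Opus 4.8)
The plan is to prove statement (1) by establishing both directions and then derive statement (2) as a quick corollary. For the forward direction of (1), assume $R$ identifies $T$. By Theorem \ref{thm:boundary}(1), $\cl(R) = \mathcal{R}(T)$, and by Theorem \ref{thm:equals} we may as well compute $\Lmax{R}{R} = \Lmax{\mathcal{R}(T)}{\mathcal{R}(T)}$. So first I would fix a pair of siblings $\{u,v\} \in \mc{S}(T)$ with common parent $w$ and show $\{\mc{C}(u),\mc{C}(v)\} \in \Lmax{R}{R}$: pick any $x \in \mc{C}(u)$, $y \in \mc{C}(v)$ and any $z \in \mc{C}(w)\setminus(\mc{C}(u)\cup\mc{C}(v))$ if such $z$ exists, or more carefully argue via the triples of $T$ that the Ahograph $[\mathcal{R}(T), \mc{C}(u)\cup\mc{C}(v)]$ has exactly the two connected components $\mc{C}(u)$ and $\mc{C}(v)$ — each is internally connected because $T$ restricted to $\mc{C}(u)$ (resp.\ $\mc{C}(v)$) contributes a spanning set of triples with apex outside, and no edge crosses between them since any triple $\rt{ab|c}$ with $a \in \mc{C}(u)$, $b \in \mc{C}(v)$ forces $c$ above $w$, hence $c \notin \mc{C}(u)\cup\mc{C}(v)$. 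Maximality (that this is $\lmax{}{}$ for the corresponding triple) follows from Lemma \ref{lem:abc} applied with $\LT = L_R$: any larger witnessing set would have to live inside the union of the components of $a,b$ and of $c$ in $[\mathcal{R}(T),L_R]$, and one checks these are exactly $\mc{C}(u)\cup\mc{C}(v)$ and the rest. Conversely, for $\{A,B\} \in \Lmax{R}{R}$ I would use Theorem \ref{thm:2cc} to get $\rt{ab|c}\in\cl(R)=\mathcal{R}(T)$ with $a,b\in A$, $c\in B$, and then argue that $A$ and $B$ must be $\mc{C}(u)$ and $\mc{C}(v)$ for some sibling pair: the two components of a maximal witnessing Ahograph correspond precisely to ``hanging subtrees'' at a single inner vertex of $T$, which is where the identifies-hypothesis does the work (non-binary resolutions are ruled out).

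For the reverse direction of (1), assume $\Lmax{R}{R} = \bigcup_{\{u,v\}\in\mc{S}(T)}\{\{\mc{C}(u),\mc{C}(v)\}\}$. By Theorem \ref{thm:cl-R}, $\cl(R) = \bigcup_{\{A,B\}\in\Lmax{R}{R}}\mc{R}_{A,B}$, so $\cl(R) = \bigcup_{\{u,v\}\in\mc{S}(T)}\mc{R}_{\mc{C}(u),\mc{C}(v)}$. I would then show this union equals $\mathcal{R}(T)$ by a direct double inclusion on triples: every triple of $T$ arises from some sibling pair (take $\rt{ab|c}$ displayed by $T$; its ``decisive'' vertex is the lca of $a,b$, sitting below the lca of all three, and this pinpoints a sibling pair), and conversely every triple in some $\mc{R}_{\mc{C}(u),\mc{C}(v)}$ is displayed by $T$. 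Hence $\cl(R) = \mathcal{R}(T)$, and Theorem \ref{thm:boundary}(1) gives that $R$ identifies $T$.

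Statement (2) then follows quickly: if $R$ identifies $T$, part (1) gives $\Lmax{R}{R} = \bigcup_{\{u,v\}\in\mc{S}(T)}\{\{\mc{C}(u),\mc{C}(v)\}\}$, so
\[\mathbb{C}(R) = \bigcup_{\{u,v\}\in\mc{S}(T)}\{\mc{C}(u),\mc{C}(v)\} \cup L_R \cup \big\{\{x\}\colon x\in L_R\big\}.\]
The clusters $\mc{C}(u)$ appearing on the left are exactly the $\mc{C}(w)$ for non-leaf, non-root $w$ together with all clusters at the children of the root — one checks every inner vertex other than the root is a member of some sibling pair (its sibling being any other child of its parent), and the children-of-root clusters are covered too once $|\mc{C}(u)\cup\mc{C}(v)|>2$ is handled; adding $L_R = \mc{C}(\rho_T)$ and the singletons $\{x\} = \mc{C}(x)$ for leaves $x$ completes $\mathcal{C}(T)$. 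Conversely every $\mc{C}(y)\in\mathcal{C}(T)$ is one of these. Hence $\mathbb{C}(R) = \mathcal{C}(T)$.

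\textbf{Main obstacle.} The delicate point is the structural claim in the forward direction of (1): that the two connected components of a \emph{maximal} witnessing Ahograph $[\mathcal{R}(T), A\cup B]$ are necessarily clusters $\mc{C}(u),\mc{C}(v)$ of a sibling pair, rather than some coarser or finer partition. Without the ``identifies'' hypothesis this fails (Figure \ref{fig:exmpl123} shows $\mathbb{C}(R)$ can still be a hierarchy even when $R$ does not identify a tree, which is why the converse of (2) is false), so the proof must use identification — i.e.\ $\cl(R)=\mathcal{R}(T)$ with $\mathcal{R}(T)$ closed — at exactly this step, combining Lemma \ref{lem:abc} (to bound the witnessing set inside a union of two components of $[\mathcal{R}(T),L_R]$) with a careful analysis of how triples of $T$ sit across those components. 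Handling the side condition $|\mc{C}(u)\cup\mc{C}(v)|>2$ (so that degenerate pairs of two leaves are excluded) and reconciling it with the singleton/leaf bookkeeping in $\mathbb{C}(R)$ is a second, more routine, technical nuisance.
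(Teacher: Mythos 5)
Your overall architecture coincides with the paper's (prove (1) in both directions, deduce (2)), and your reverse direction of (1) and your derivation of (2) are sound and essentially the paper's arguments. The gap sits exactly where you flag it: the forward direction of (1). Your direct computation of $[\mc{R}(T),\mc{C}(u)\cup\mc{C}(v)]$ is fine, but it only shows $\{\mc{C}(u),\mc{C}(v)\}\in\LrR{\rt{ab|c}}{\mc{R}(T)}$; the \emph{maximality} argument you offer does not work. A single application of Lemma \ref{lem:abc} with $\LT=L_R$ bounds any witnessing set by the union of the connected components of $[\mc{R}(T),L_R]$ containing $a,b$ and $c$ --- but those components are the clusters of the \emph{children of the root}, not ``$\mc{C}(u)\cup\mc{C}(v)$ and the rest'', whenever the sibling pair lies below a root child. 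For the caterpillar $T=((((a,b),c),d),e)$ and the sibling pair $u=\mathrm{lca}(a,b)$, $v=c$, the vertices $a,b,c$ all lie in the single component $\{a,b,c,d\}$ of $[\mc{R}(T),L_R]$, which strictly contains $\mc{C}(u)\cup\mc{C}(v)=\{a,b,c\}$; so your claimed check fails, and one would have to iterate the shrinking (as in Algorithm \ref{alg:lmax}) or argue differently. Likewise, your converse inclusion --- that the two components of a maximal witnessing Ahograph ``correspond precisely to hanging subtrees at a single inner vertex'' --- is precisely the statement to be proven and is only asserted; no step of your sketch actually invokes the identification hypothesis to rule out anything.

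For comparison, the paper closes this step without any maximality computation on $[\mc{R}(T),L_R]$: with $\{A,B\}=\lmax{\rt{ab|c}}{R}$, $a,b\in A$, $c\in B$, and $\{u,v\}$ the sibling pair determined by $\rt{ab|c}\in\mc{R}(T)$, it first gets $A\subseteq\mc{C}(u)$ and $B\subseteq\mc{C}(v)$ because every $x\in A$ yields $\rt{ax|c}\in\cl(R)=\mc{R}(T)$ via Theorem \ref{thm:2cc}, which forces $x\in\mc{C}(u)$; then it gets $\mc{C}(u)\subseteq A$ because an $x\in\mc{C}(u)\setminus A$ would produce a second, distinct element $\lmax{\rt{ax|c}}{R}=\{A',B'\}$ of $\Lmax{R}{R}$ with $a\in A\cap A'$ and $c\in B\cap B'$, contradicting Corollary \ref{cor:Bempty}. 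The same two steps also give $\bigcup_{\{u,v\}\in\mc{S}(T)}\{\{\mc{C}(u),\mc{C}(v)\}\}\subseteq\Lmax{R}{R}$. Until an argument of this kind is supplied, the forward direction of (1) --- and hence (2), which you derive from it --- is not established by your proposal.
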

\begin{proof}
Let $R$ be a consistent triple set and $T\in \spa(R)$.
By Lemma \ref{lem:gruen}, $R$ identifies $T$ if and only if $\cl(R) = \mc{R}(T)$.

We prove first Item (1). Assume that $R$ identifies $T$.
Let $\lmax{\rt{ab|c}}{R} = \{A,B\} \in  \Lmax{R}{R}$. 
W.l.o.g.\ assume that $a,b\in A$ and $c\in B$. 
By construction of
$\Lmax{R}{R}$  and Theorem \ref{thm:2cc}, we have $\rt{ab|c}\in \cl(R)$.
Note, $\rt{ab|c}\in \cl(R) = \mc{R}(T)$ if and only if there is a pair of siblings $\{u,v\}\in \mc{S}(T)$
such that $a,b\in \mc{C}(u)$ and $c\in \mc{C}(v)$. 

In what follows, we show that $A = \mc{C}(u)$ and $B=\mc{C}(v)$.
Assume for contradiction that $A \not\subseteq \mc{C}(u)$.
Hence, there is an element $x\in A\setminus \mc{C}(u)$. 
By definition of $\lmax{\rt{ab|c}}{R} = \{A,B\}$ and Theorem \ref{thm:2cc}, 
$\rt{ax|c}\in  \cl(R) = \mc{R}(T)$. But this immediately implies that $x\in \mc{C}(u)$;
a contradiction. Hence, $A \subseteq \mc{C}(u)$ and, analogously, $B \subseteq \mc{C}(v)$.
Assume for contradiction that $A \subsetneq \mc{C}(u)$. 
Again, there is an element $x\in\mc{C}(u)\setminus A$, which implies that $\rt{ax|c}\in \mc{R}(T) =\cl(R)$. 
Thus, Lemma \ref{lem:max1} implies that 
there exists a unique element $\lmax{\rt{ax|c}}{R} = \{A',B'\} \in  \Lmax{R}{R}$
such that w.l.o.g.\ $a,x\in A'$ and $c\in B'$.
Thus, $A\neq A'$ as otherwise, $x\in A$. Note, $c\in B\cap B'$. 
However,  Corollary \ref{cor:Bempty} implies that $B\cap B'$ must be empty, 
since $A\cap A'\neq \emptyset$; a contradiction. 
Therefore, $A = \mc{C}(u)$ and, analogously, $B=\mc{C}(v)$.
In summary, $\Lmax{R}{R} \subseteq \bigcup_{\{u,v\} \in \mc S(T)} \{\{\mc C(u),\mc C(v)\}\} $ 

Now, let $\{u,v\} \in \mc S(T)$. Since, $|\mc{C}(u)\cup\mc{C}(v)|>2$ we can assume
that  at least one of $\mc{C}(u)$ or $\mc{C}(v)$ contains at least two elements, say $\mc{C}(u)$. 
Thus, there are $a,b\in \mc{C}(u)$ and $c\in \mc{C}(v)$, which implies that 
$\rt{ab|c}\in \mc{R}(T) = \cl(R)$. Lemma \ref{lem:max1} implies that 
there exists a unique element $\lmax{\rt{ab|c}}{R} = \{A,B\} \in  \Lmax{R}{R}$. 
Now we can re-use exactly the same arguments as before to show that $\mc{C}(u)=A$ and $\mc{C}(v)=B$. 
Thus, $\bigcup_{\{u,v\} \in \mc S(T)} \{\{\mc C(u),\mc C(v)\}\} \subseteq \Lmax{R}{R}$
and therefore,  $\bigcup_{\{u,v\} \in \mc S(T)} \{\{\mc C(u),\mc C(v)\}\}   =\Lmax{R}{R}$

Conversely, assume that  $\Lmax{R}{R} = \bigcup_{\{u,v\} \in \mc S(T)} \{\{\mc C(u),\mc C(v)\}\} $. 
Let $\rt{ab|c}\in \mc{R}(T)$. Again, there must be a pair of siblings $\{u,v\}\in \mc{S}(T)$
such that $a,b\in \mc{C}(u)$ and $c\in \mc{C}(v)$. Hence, $\{\mc{C}(u),\mc{C}(v)\} \in \Lmax{R}{R}$. 
Since $\lmax{\rt{ab|c}}{R} \in \LrR{\rt{ab|c}}{R}$, we can apply Lemma \ref{lem:nonempty} to
conclude that $\rt{ab|c}\in \cl(R)$ and hence, $\mc{R}(T)\subseteq \cl(R)$. 
Moreover,  $T\in \spa(R)$ implies that  $\cl(R)\subseteq \mc{R}(T)$ and therefore, 
$\mc{R}(T) =  \cl(R)$. Lemma \ref{lem:gruen} implies that $R$ identifies $T$.

We continue to prove Item (2). 
Assume that $R$ identifies $T$. 
Hence,  $\Lmax{R}{R} = \bigcup_{\{u,v\} \in \mc S(T)} \{\{\mc C(u),\mc C(v)\}\} $. 
Now it is easy to see that $\mc{C}^*\coloneqq\bigcup_{v\in V^0\setminus \{\rho_T\}} \{\mc C(v)\} =\bigcup_{\{u,v\} \in \mc S(T)} \{\mc C(u),\mc C(v)\}$.
Since $L_T= L_R$ we obtain 
$\mc{C}(T) = \mc{C}^* \cup L_T \cup \left\{\{x\} \colon x\in L_T\right\} = \mathbb{C}(R)$.
\end{proof}

Note, the converse of Statement (2) in Lemma \ref{lem:ident-cr} is not satisfied in 
general, see Figure \ref{fig:exmpl123}.

\subsection{Quartets}

Here, we consider unrooted trees in which every inner vertex has degree at least $3$.
Splits and quartets (unrooted binary trees on four leaves) serve as building blocks for unrooted trees.
To be more precise, each edge $e\in E(T)$ of an unrooted tree $T$ gives rise to a split $A|B$, that is, if one
removes $e$ from $T$ one obtains two distinct trees $T_1$ and $T_2$ with
leaf sets $A= L(T_1)$ and $B=L(T_2)$. 
A tree can be reconstructed in linear time from its set of splits \cite{ME:85,Gus:91,BUNEMAN1974205}

If there is a split $A|B$ in $T$ such that 
$a,a'\in A$ and $b,b'\in B$, we say that the quartet $aa'|bb'$
is displayed in $T$. Equivalently, the quartet $aa'|bb'$ is displayed in 
 $T$,
if $a,a',b,b'\in L(T)$ and the path from $a$ to $a'$ does not intersect the path from $b$ to $b'$ in $T$. 
If $Q$ contains all  quartets that are displayed in an unrooted tree $T$, then 
$T$ is uniquely determined by $Q$ and can be reconstructed in polynomial time \cite{RSA:RSA3}. 
An arbitrary set of quartets $Q$ is called consistent,
if there is a tree that displays each quartet in $Q$, see \cite{Semple:book,Steel:book} for further details. 
Determining whether an arbitrary set of quartets is consistent is an NP-complete
problem \cite{Steel1992}.

Analogously as for rooted triples, we can define
the set $\spa(Q)$, the closure $\cl(Q)$ and the two sets $\minimal(\SC(Q))$ and $\minimum(\SC(Q))$.  
Now, consider the ordered pair $(Q,\mathbb{F}_Q)$ where $Q$ is a consistent set of quartets
and $\mathbb{F}_Q =\{Q''\subseteq Q'\colon Q'\in \minimal(\SC(Q))\}$.
Of course, one might ask whether $(Q,\mathbb{F}_Q)$ is a matroid as well and thus, 
whether minimal representative sets $Q'\in \minimal(\SC(Q))$ have all the same 
cardinality.

A counterexample, which we recall here for the sake of completeness,  
is given in \cite{Steel1992}:
Let $\mc Q(T)$ be the set of all quartets that are displayed in 
a binary unrooted tree $T$ with leaf set $L$. Thus,  $\spa(\mc Q(T)) = \{T\}$.  
Proposition 2(3) in \cite{Steel1992} implies that there is a minimal subset 
$Q'\subseteq \mc Q(T)$ of size $|Q'|=|L|-3$ such that $\spa(Q') = \{T\}$
and hence, $\cl(\mc Q(T))=\cl(Q')$. 
Thus, for the tree $T$ in Figure \ref{fig:quartet} there is a minimal 
representative quartet set of size $4$. However, the set 
$Q' = \{{57|24, 15|67, 12|35, 47|13, 34|56}\}$ is also a minimal 
representative quartet set of $Q(T)$, but has size $5$. Thus, the basis elements of 
 $(Q,\mathbb{F}_Q)$ don't have the same size, in general. Therefore, 
 $(Q,\mathbb{F}_Q)$ is not a matroid.

\begin{figure}[htbp]
  \begin{center}
    \includegraphics[width=.3\textwidth]{./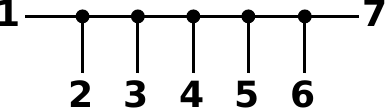}
  \end{center}
	\caption{
		Shown is a binary unrooted tree $T$ with leaf set $L=\{1,2,\dots,7\}$.
    }
	\label{fig:quartet}
\end{figure}

\section{Conclusion and Outlook}
\label{sec:end}

In this contribution, we were concerned with minimum representative triple
sets, that is, subsets $R'\subseteq R$ that have minimum cardinality and
for which $\cl(R') = \cl(R)$. We have shown that it is possible to compute
minimum representative triple sets in polynomial time via a simple greedy
approach. To prove the correctness of this method, we showed that minimal
representative sets (and its subsets) form a matroid $(R,\mathbb{F}_R)$.
Minimal representative sets contain minimum representative sets and since
they form the basis of the matroid $(R,\mathbb{F}_R)$, they all must have
the same cardinality. The techniques we used to show the matroid structure
have been utilized to provide a novel and efficient method to compute the
closure $\cl(R)$ of a consistent triple set $R$. For this algorithm,
minimum representative triple sets $R'\in \minimal(\SC(R))$ can be used as
input, which significantly improves the runtime of the closure computation.
Hence, a particular problem that might be addressed in future work is the
design of a more efficient algorithm to compute $R'\in \min(\SC(R))$.
Furthermore, the size of $R'\in \minimal(\SC(R))$ is not known \emph{a
priori}. Boundaries for such sets $R'$ have not been established so-far,
except for some rare examples as ``defining'' or ``identifying'' triple
sets \cite{GSS:07,Steel1992}. Thus, in order to understand minimal
representative triple sets in more detail, a more thorough analysis of the
structure of the matroid $(R,\mathbb{F}_R)$, its collection of bases $\min(\SC(R))$ or
its dual $(R,\mathbb{F}_R)^*$ is needed. 

We also assume that the runtime of Algorithm \ref{alg:lmax} can be
improved, which would immediately lead to a faster method to compute
$\cl(R)$.

An interesting starting point for future research might be the investigation 
of the sets $\Lmax{R}{R}$ in more detail and finding a characterization 
for sets $R$ where $\mathbb{C}(R)$ provides a hierarchy $\mathcal{C}(T)$ of some tree $T$.

Moreover, generalizations of the established results would be of interest, for instance, 
is there still a matroid structure if one does not insist that for the subset $R'$ of $R$ we have 
$\cl(R')=\cl(R)$?
What can be said about the structure of representative sets for non-consistent
triple sets, see e.g.\ \cite{GSS:07}?
Although minimal representative sets of quartets do not provide a matroid structure, 
it might be useful to figure out which of the other established result are satisfied
for quartets as well.  

\section*{Acknowledgment}
We are grateful to Volkmar Liebscher, Mike Steel,   Annemarie Luise K\"uhn
and the anonymous referees for their constructive comments and 
suggestions which has led to a significant improvement of this paper.

\bibliographystyle{plain}
\bibliography{biblio}
\end{document}